\documentclass[11pt]{amsart}
\headheight=8pt     \topmargin=0pt \textheight=640pt
\textwidth=432pt \oddsidemargin=25pt \evensidemargin=25pt

\usepackage{amsmath}
\usepackage{amssymb}
\usepackage{amsfonts}
\usepackage{amsthm}
\usepackage{enumerate}
\usepackage[mathscr]{eucal}
\usepackage{verbatim}
\usepackage{amsthm}
\usepackage{amscd}

\newtheorem{theorem}{Theorem}[section]

\newtheorem{lemma}[theorem]{Lemma}

\theoremstyle{definition}
\newtheorem{definition}{Definition}
\newtheorem{remark}{Remark}

\numberwithin{equation}{section}

\newcommand\relphantom[1]{\mathrel{\phantom{#1}}}

\begin{document}

\address{Department of Mathematics \\
           University of Wisconsin - Madison\\
            , U.S.A.}
   \email{park@math.wisc.edu}

\author{Bae Jun Park}

\title[Function spaces]{On the boundedness of pseudo-differential operators on Triebel-Lizorkin and Besov spaces}
\keywords{}

\begin{abstract} 
In this work we show endpoint boundedness properties of pseudo-differential operators of type $(\rho,\rho)$, $0<\rho<1$, on Triebel-Lizorkin and Besov spaces.  Our results are sharp and they also cover operators defined by compound symbols.

\end{abstract}

\maketitle

\section{\textbf{Introduction and main results}}\label{intro}

Let $S(\mathbb{R}^d)$ denote the Schwartz space and $S'(\mathbb{R}^d)$ the space of tempered distributions. 
For $f\in S(\mathbb{R}^d)$ the Fourier transform is defined by the formula
\begin{equation*}
\widehat{f}(\xi):=\int_{\mathbb{R}^d}{f(x) e^{-2\pi i\langle x,\xi\rangle}}dx ~~~(\xi \in\mathbb{R}^d)
\end{equation*} and denote by $f^{\vee}$ the inverse Fourier transform of $f$.  We also extend these transforms  to the space of tempered distributions.

A symbol $a$ in H\"ormander's class $\mathcal{S}^m_{\rho,\delta}$ is a smooth function defined on $\mathbb{R}^d\times\mathbb{R}^d$, satisfying that for all multi-indices $\alpha$ and $\beta$ there exists a constant $C_{\alpha,\beta}$ such that 
\begin{equation}\label{symbolest}|\partial_{\xi}^{\alpha}\partial_{x}^{\beta}a(x,\xi)|\leq C_{\alpha,\beta}(1+|\xi|)^{m-\rho|\alpha|+\delta|\beta|} ~ \text{for}~ (x,\xi)\in\mathbb{R}^d\times \mathbb{R}^d,
\end{equation} 
and the corresponding pseudo-differential operator $T_a$ is given by 
\begin{equation}\label{symbolcal}
T_af(x):=\int_{\mathbb{R}^d}{a(x,\xi)\widehat{f}(\xi)e^{2\pi i \langle x,\xi \rangle}}d\xi, \quad f\in\mathcal{S}(\mathbb{R}^d).
\end{equation}
Denote by  $Op\mathcal{S}_{\rho,\delta}^m$ the class of pseudo-differential operators with symbols in $\mathcal{S}_{\rho,\delta}^{m}$.
In \cite{Ho}, \cite[p94]{Ho1} H\"ormander  showed that  for  $0\leq \delta < \rho <1$ the adjoint operator of $T_a\in Op\mathcal{S}_{\rho,\delta}^{m}$ belongs to the same type of class by using an asymptotic expansion, and  in this case $(T_a)^*=T_{a^*}$ where
\begin{equation*}
a^*(x,\xi)=\int_{\mathbb{R}^d\times\mathbb{R}^d}{{\overline{a}(x-y,\xi-\eta)e^{-2\pi i\langle y,\eta\rangle }}d\eta}dy,
\end{equation*} interpreted suitably as an oscillatory integral.  He also mentioned that this is also true when  $0\leq \delta=\rho <1$  and we will give a proof in Appendix.
The operator $T_a$ is well-defined on $S(\mathbb{R}^d)$ and it maps $S(\mathbb{R}^d)$ continuously into itself. This extends via duality to a mapping from $S'(\mathbb{R}^d)$ to itself.

We now recall the definitions of Besov spaces  and Triebel-Lizorkin spaces from \cite{Fr_Ja, Tr}. Let $0<p,q\leq \infty$ and $s\in\mathbb{R}$.
 Let $\Phi$ be a system of functions $\{\omega_k\}$ in $S(\mathbb{R}^d)$   satisfying
\begin{equation*}
\sum_{k=0}^{\infty}{\omega_k(x)}=1 \quad \text{for}\quad x\in\mathbb{R}^d
\end{equation*}
\begin{equation*}
Supp{(\omega_0)}\subset \{x:|x|\leq 2\}
\end{equation*}
\begin{equation*}
Supp{(\omega_k)}\subset\{x:2^{k-1}\leq |x|\leq 2^{k+1}\} \quad\text{if}\quad k=1,2,3,\dots
\end{equation*}
\begin{equation*}
\sup_{{x\in\mathbb{R}^d, k\in\mathbb{Z}}}{2^{k|\alpha|}|\partial^{\alpha}\omega_k(x)|}<\infty \quad\text{for every multi-index}\quad \alpha.
\end{equation*}
For a fixed choice of such a system $\Phi$, define Besov spaces $B_p^{s,q}$ and  Triebel-Lizorkin spaces ${F}_p^{s,q}$ as
\begin{equation*}
{B}_p^{s,q}:=\{f\in S'(\mathbb{R}^d): \big\Vert  f\big\Vert_{{F}_p^{s,q}}:=\big\Vert  \{2^{ks} (\omega_k\widehat{f})^{\vee} \}  \big\Vert_{l^q(L^p)}<\infty \},   
\end{equation*}
\begin{equation}\label{triebelnorm}
{F}_p^{s,q}:=\{f\in S'(\mathbb{R}^d): \big\Vert  f\big\Vert_{{F}_p^{s,q}}:=\big\Vert  \{2^{ks} (\omega_k\widehat{f})^{\vee} \}  \big\Vert_{L^p(l^q)}<\infty \},   \quad p<\infty.
\end{equation}

As shown in \cite[2.3.2]{Tr} the spaces do not depend on the choice of $\Phi$.
On the other hand, an extension of (\ref{triebelnorm}) to $p=\infty$ does not make sense (unless $q=\infty$, in which case $F_{\infty}^{s,\infty}=B_{\infty}^{s,\infty}$) because of the dependence of $\Phi$. For details see  \cite[2.1.4]{Tr}.
 Alternatively, we define 
  \begin{equation*}
 \Vert f\Vert_{F_{\infty}^{s,q}}:=\big\Vert \big(  \omega_0 \widehat{f} \big)^{\vee} \big\Vert_{L^{\infty}}+\sup_{P:l(P)<1}{\Big(\frac{1}{|P|}\int_P{\sum_{k=-\log_2{l(P)}}^{\infty}\big| \big( 2^{ks}\omega_k\widehat{f}  \big)^{\vee}(x)\big|^q}dx\Big)^{1/q}}
 \end{equation*} where the supremum is taken over all dyadic cubes $P$ of sidelength $l(P)<1$, and
\begin{equation*}
F_{\infty}^{s,q}:=\Big\{f\in S' : \Vert f\Vert_{F_{\infty}^{s,q}}<\infty\Big\}
\end{equation*} 
which are analogous to characterizations of $bmo$ via Carleson measures.
Then this definition is independent of the choice of $\omega_k\in \Phi$ (See \cite{Bu_Ta, Fr_Ja1}).

In this section we fix such a system and use a notation $\phi_k(x)=\omega_k^{\vee}(x)$ where $\{\phi_k\}_{k=0,1,2,\dots}$ is a Littlewood-Paley partition of unity. That is, $\phi_0$ and $\phi$ are Schwartz functions satisfying $Supp(\widehat{\phi_0})\subset \{\xi: |\xi|\leq 2\}$, $Supp(\widehat{\phi})\subset \{\xi: 2^{-1}\leq |\xi|\leq 2\}$, and $\sum_{k=0}^{\infty}{\widehat{\phi_k}(\xi)}=1$ for $\xi\in\mathbb{R}^d$ where $\phi_k(x):=2^{kd}\phi(2^kx)$ for $k\geq 1$.

Then the (quasi-)norms on the spaces are 
 \begin{equation*}
 \Vert f\Vert_{{B}_p^{s,q}}= \Big(\sum_{k=0}^{\infty}{(2^{s k}\big\Vert \phi_k\ast f\big\Vert_{L^p})^q}\Big)^{{1}/{q}} , 
 \end{equation*}
 \begin{equation*}
 \Vert f\Vert_{{F}_p^{s,q}}=\Big\Vert  \Big(\sum_{k=0}^{\infty}{(2^{s k}|\phi_k\ast f|)^q}\Big)^{{1}/{q}}  \Big\Vert_{L^p}, \quad p<\infty
 \end{equation*}
 and
 \begin{equation*}
 \Vert f\Vert_{F_{\infty}^{s,q}}=\big\Vert \phi_0\ast f \big\Vert_{L^{\infty}}+\sup_{l(P)<1}{\Big( \frac{1}{|P|}\int_P{\sum_{k=-\log_2{l(P)}}^{\infty}{2^{skq}|\phi_k\ast f(x)|^q}}dx   \Big)^{1/q}}.
 \end{equation*}
According to those norms,  the spaces are quasi-Banach spaces (Banach spaces if $p\geq 1, q\geq 1$).

Note that those are a generalization of many standard function spaces such as $L^p$ spaces, Sobolev spaces, and Hardy spaces. We recall 
\begin{align*}
L^p = F_p^{0,2}, &\qquad 1<p<\infty\\
h^p = F_p^{0,2}, &\qquad 0<p<\infty\\
 {L}_s^p = {F}_p^{s,2}, &\qquad s>0, 1<p<\infty \\
 bmo = {F}_{\infty}^{0,2}. &
\end{align*}
where $h^p$ denotes the local Hardy spaces, introduced by Goldberg \cite{Go}.

The multiplier operator 
\begin{equation}\label{multiplierexample}
c_{m,\rho}(D):=\dfrac{e^{ -2\pi i |D|^{{(1-\rho)}}}}{(1+|D|^2)^{-{m}/{2}}}
\end{equation} is a typical example of translation invariant $Op\mathcal{S}_{\rho,\rho}^{m}$-operators and there are several boundedness results. Fefferman \cite{Fe0}, Hirschman \cite{Hi}, Stein \cite{St}, and Wainger \cite{Wa}   proved that for $1<p<\infty$ and $0<\rho<1$, $c_{m,\rho}(D)$ extends to a bounded operator on $L^p$ if and only if \begin{equation}\label{cexample}
m\leq-d(1-\rho)\big| 1/2-1/p \big|.
\end{equation} Pramanik, Rogers, and Seeger \cite{Pr_Ro_Se} showed  that when $2<p<\infty$, $c_{m,\rho}(D)$ maps $F_p^{0,p}$ into $F_p^{0,q}$ for any $0<q\leq \infty$.

Some boundedness results also hold for general $Op\mathcal{S}_{\rho,\rho}^{m}$-operators  when $0 <\rho <1$. Calder\'on and Vaillancourt \cite{Ca_Va} proved the $L^2$ boundedness if $m=0$ by using an almost orthogonality technique in a Hilbert space. Fefferman \cite{Fe} generalized this result to $L^p$ boundedness when $1<p<\infty$ with the condition (\ref{cexample}) by using an interpolation theorem in \cite{Fe_St}. P\"aiv\"arinta and Somersalo \cite{Pa_So} proved that these operators map $h^p$ into itself for $0<p<\infty$ if (\ref{cexample}) holds.

In this paper we extend the boundedness result of $c_{m,\rho}(D)$ in \cite{Pr_Ro_Se} to general $Op\mathcal{S}_{\rho,\rho}^{m}$-operators for all $0<p,q\leq \infty$. 

\begin{theorem}\label{main}
Let $0 <  \rho <1$, $0<p<\infty$, $0<q,t\leq\infty$, and $s_1,s_2 \in \mathbb{R}$. Suppoe $m\in\mathbb{R}$ satisfies \begin{equation}\label{ccexample}
m-s_1+s_2\leq -d(1-\rho)\big|  1/2-1/p \big|
\end{equation} and $a\in\mathcal{S}_{\rho,\rho}^m$. Then $T_a$ maps $F_p^{s_1,q}$ into $F_p^{s_2,t}$ if one of the following conditions holds.
\begin{enumerate}
\item   $m-s_1+s_2<-d(1-\rho)\big| 1/2-1/p  \big|$,
\item  $p=2$, $q\leq 2\leq t$, and $m=0$,
\item $0<p<2$, $p\leq t\leq\infty$, $0<q\leq \infty$, and $m-s_1+s_2=-d(1-\rho)\big(1/p-1/2\big)$,
\item $2< p< \infty$, $0<t\leq\infty$, $0<q\leq p$,  and $m-s_1+s_2=-d(1-\rho)\big(1/2-1/p\big)$.

\end{enumerate}

\end{theorem}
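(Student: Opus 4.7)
The strategy is: (i) reduce to $s_1=s_2=0$ by Bessel lifting; (ii) decompose the symbol dyadically in $\xi$; (iii) establish uniform endpoint estimates on the pieces, and assemble via a near-orthogonality argument. Since $\langle\xi\rangle^\sigma\in\mathcal{S}^\sigma_{1,0}\subset\mathcal{S}^\sigma_{\rho,\rho}$, the composition formula for $(\rho,\rho)$-type symbols (coming from the same asymptotic expansion that yields the adjoint statement recalled in the introduction and justified in the Appendix) gives $\langle D\rangle^{s_2}T_a\langle D\rangle^{-s_1}\in Op\mathcal{S}^{m-s_1+s_2}_{\rho,\rho}$. As $\langle D\rangle^\sigma$ is an isomorphism $F_p^{s,q}\to F_p^{s-\sigma,q}$, it suffices to treat $s_1=s_2=0$ with $m\le -d(1-\rho)|1/2-1/p|$. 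Now write $a=\sum_{j\ge 0}a_j$ with $a_j(x,\xi)=a(x,\xi)\omega_j(\xi)$, so $a_j$ is supported in $|\xi|\sim 2^j$. Integration by parts in $\xi$ using the symbol bounds gives, for every $N$,
$$|K_j(x,y)|\lesssim 2^{j(m+d)}(1+2^{\rho j}|x-y|)^{-N},$$
where $K_j$ is the Schwartz kernel of $T_{a_j}$. Since the $x$-Fourier transform of $a_j(\cdot,\xi)$ is concentrated at scale $2^{\rho j}<2^j$, each $T_{a_j}$ acts near-diagonally on the Littlewood-Paley decomposition: $T_{a_j}g$ is negligible unless $g$ has frequency near $2^j$.

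\textbf{Case analysis.} Case (2), the $L^2$-endpoint, follows from the Calderón-Vaillancourt $L^2$ boundedness combined with the embeddings $F_2^{0,q}\hookrightarrow L^2\hookrightarrow F_2^{0,t}$ valid for $q\le 2\le t$. Case (1) is implied by any of the endpoint cases via trivial embeddings in $q,t$ and the $\varepsilon$-room in the order condition. The content is cases (3) and (4); for these I would prove the uniform estimate $\|T_{a_j}\|_{F_p^{0,q}\to F_p^{0,t}}\lesssim 1$ at the critical order $m=-d(1-\rho)|1/2-1/p|$, and then sum in $j$ by the near-diagonal structure above.

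\textbf{The endpoint estimates.} For case (4) ($p>2$), the assumption $q\le p$ permits passing from the $\ell^q$-square function to an $\ell^p$-norm upper bound, and the dyadic frequency localization reduces the required inequality to a Fefferman-type bound of the form $\|T_{a_j}\|_{L^2\to L^p}\lesssim 2^{j[m+d(1-\rho)(1/2-1/p)]}$. I would establish this via a further phase-space decomposition of $a_j$ into boxes of size $2^{-\rho j}\times 2^{\rho j}$ together with a Cotlar-Knapp-Stein almost-orthogonality argument, exploiting the kernel decay displayed above. Case (3) ($p<2$) is the formal dual when $p\ge 1$, using that the adjoint is again in $Op\mathcal{S}^{m}_{\rho,\rho}$; for $0<p<1$ one argues directly via a Frazier-Jawerth atomic/molecular decomposition of $F_p^{0,q}$, showing that $T_{a_j}$ maps an atom to a molecule with acceptable decay furnished by the kernel bound.

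\textbf{Main obstacle.} The hardest step is the endpoint estimate for $T_{a_j}$ in cases (3) and (4). One must reconcile two different scale structures: the unit-scale Littlewood-Paley decomposition underlying $F_p^{s,q}$, and the $(2^{-\rho j},2^{\rho j})$ phase-space scale producing the $(\rho,\rho)$-cancellation, all without losing the $\ell^q$/$\ell^t$ fine structure of the target. In particular, in case (3) with $0<p<1$, the molecular analysis must carefully account for the smoothness and moment conditions on $F_p^{0,q}$-atoms under the non-translation-invariant action of $T_{a_j}$; this is where the main analytic work is concentrated.
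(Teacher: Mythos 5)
Your high-level plan (reduce to $s_1=s_2=0$, decompose dyadically, treat the diagonal piece as the main term, and assemble) is parallel to the paper's, and your treatment of cases (1) and (2) via embeddings is correct. However, there are two genuine gaps in the hard cases (3) and (4).

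\textbf{(i) Decomposition in $\xi$ only is not enough.} You decompose $a=\sum_j a_j$ with $a_j(x,\xi)=a(x,\xi)\omega_j(\xi)$ and then assert that since the $x$-Fourier transform of $a_j(\cdot,\xi)$ is ``concentrated'' at scale $2^{\rho j}$, the operator acts near-diagonally in frequency. A $(\rho,\rho)$-symbol is smooth in $x$ but has no compact $x$-frequency support, so the output frequency of $T_{a_j}(\tilde\phi_j\ast f)$ can land far away from $2^j$. The paper handles exactly this by decomposing in both variables, $a_{j,k}(x,\xi)=\phi_j\ast a(\cdot,\xi)(x)\widehat{\phi_k}(\xi)$, and splitting into the three regions $j\gg k$, $j\sim k$, $j\ll k$. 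The piece $a^{(1)}$ (where the $x$-frequency of the symbol dominates) is the rigorous version of the ``tail'' you are dismissing: it is shown to be infinitely smoothing via a Taylor-expansion argument in conjunction with Marschall's pointwise bound, and that argument is not subsumed by a kernel decay estimate of the type you wrote. Without the $j$-decomposition of the $x$-variable, your ``near-diagonal'' claim is a heuristic, not a lemma.

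\textbf{(ii) The endpoint estimate in case (4) cannot be reduced to $L^2\to L^p$ bounds.} For $p>2$ the conclusion is that $T_a$ maps $F_p^{0,q}$ into $F_p^{0,t}$ for every $0<t\le\infty$, including $t$ much smaller than $p$. Passing from $\ell^q$ to $\ell^p$ on the source and then using a Fefferman-type bound $\|T_{a_j}\|_{L^2\to L^p}\lesssim 2^{j[m+d(1-\rho)(1/2-1/p)]}$ would only yield boundedness into $F_p^{0,p}$; it gives no control over the finer $\ell^t$ structure on the target side. This is precisely what makes case (4) an endpoint result and not a consequence of single-piece $L^p$-estimates. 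The paper invokes the Pramanik--Rogers--Seeger logarithmic lemma, which under an $L^2$ and an $L^{p_0}$ bound together with a Calder\'on--Zygmund type kernel tail condition produces the vector-valued $\ell^q$ output estimate with a controlled $(\log R)^{1/q-1/p}$ loss; the dyadic frequency localization of $T_{b_k}f$ then removes the loss. Your Cotlar--Stein decomposition can recover the $L^2\to L^2$ and $L^2\to L^p$ bounds but does not by itself produce the $\ell^t$ gain. Similarly for case (3) with $0<p<1$ you would need the Frazier--Jawerth sequence-space atomic decomposition and a careful on/off-cube kernel analysis at scale $l(Q)^\rho$ (rather than $l(Q)$), which is where the paper expends most of its effort; this is not a routine molecular estimate.

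A minor further point: the paper also proves that one may assume $a(\cdot,\xi)$ compactly supported in $x$ (by a limiting argument), which is what makes the integration by parts in $z$ for the size estimate of the kernel legitimate; your sketch uses integration by parts in $\xi$ only, which is fine for what you wrote but will not suffice once the $x$-decomposition is incorporated.
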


\begin{theorem}\label{besov}
Let $0 < \rho<1$, $0<p<\infty$, $0<q,t\leq \infty$, and $s_1,s_2\in \mathbb{R}$. Suppose $m\in\mathbb{R}$ satisfies $(\ref{ccexample})$ and $a\in \mathcal{S}_{\rho,\rho}^{m}$. Then $T_a$ maps $B_p^{s_1,q}$ into $B_p^{s_2,t}$ if one of the following conditions holds.
\begin{enumerate}
\item   $m-s_1+s_2<-d(1-\rho)\big| 1/2-1/p  \big|$,
\item  $q\leq t$ and $m-s_1+s_2=-d(1-\rho)\big|1/2-1/p\big|$.

\end{enumerate}
\end{theorem}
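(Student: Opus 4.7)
The plan is to reduce to $s_1=s_2=0$ via Bessel-potential lifting, decompose the symbol by a Littlewood-Paley partition in $\xi$, and then exploit the $\ell^q(L^p)$ structure of the Besov norm to reduce the whole problem to a weighted scalar convolution inequality on $\mathbb{Z}$.

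\textbf{Reduction to $s_1=s_2=0$.} The Bessel potential $J^{\sigma}:=(I-\Delta)^{\sigma/2}$ has symbol in $\mathcal{S}^{\sigma}_{1,0}\subset\mathcal{S}^{\sigma}_{\rho,\rho}$. Since the composition of operators in $Op\mathcal{S}^{m_i}_{\rho,\rho}$ stays in $Op\mathcal{S}^{m_1+m_2}_{\rho,\rho}$ (the borderline case $\delta=\rho$ is settled in the appendix of the paper) and $J^{\sigma}:B_p^{s,q}\to B_p^{s-\sigma,q}$ is an isomorphism, bounding $T_a:B_p^{s_1,q}\to B_p^{s_2,t}$ is equivalent to bounding $J^{s_2}T_a J^{-s_1}:B_p^{0,q}\to B_p^{0,t}$, whose symbol lies in $\mathcal{S}^{m-s_1+s_2}_{\rho,\rho}$. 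After renaming I therefore assume $s_1=s_2=0$ and $m\leq -d(1-\rho)|1/2-1/p|$.

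\textbf{Symbol decomposition and two key estimates.} Let $\{\psi_j\}_{j\geq 0}$ be a Littlewood-Paley partition of unity in $\xi$ and set $a_j:=a\psi_j$, so $a=\sum_j a_j$ with each $a_j\in\mathcal{S}^m_{\rho,\rho}$ uniformly in $j$ and supported in $|\xi|\sim 2^j$. First I would establish a frequency-localization bound: for every $N$,
\[
\|\phi_l\ast T_{a_j}g\|_{L^p}\lesssim 2^{-N|l-j|}\|T_{a_j}g\|_{L^p}\quad\text{whenever }|l-j|\geq c_\rho,
\]
which follows from the $x$-derivative estimate $|\partial_x^\beta a_j|\lesssim 2^{j(m+\rho|\beta|)}$ combined with integration by parts in the kernel of $T_{a_j}$; the crucial fact that the spreading scale $2^{j\rho}$ is strictly smaller than $2^j$ (i.e.\ $\rho<1$) drives this rapid decay. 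Next I would establish a uniform $L^p$-bound: at criticality, P\"aiv\"arinta-Somersalo gives $\|T_{a_j}g\|_{h^p}\lesssim\|g\|_{h^p}$ uniformly in $j$, and since $a_j$ kills frequencies outside $|\xi|\sim 2^j$ and $L^p\sim h^p$ on such frequency pieces, this converts to
\[
\|T_{a_j}f\|_{L^p}\lesssim 2^{-j\varepsilon}\|\phi_j\ast f\|_{L^p},
\]
where $\varepsilon\geq 0$ is the slack in (\ref{ccexample}). In case (1) the gain $\varepsilon>0$ is justified by noting that $2^{j\varepsilon}a_j\in\mathcal{S}^{m+\varepsilon}_{\rho,\rho}$ uniformly in $j$, so the critical-order bound applies to the rescaled symbol.

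\textbf{Assembly and main obstacle.} Combining the two displays,
\[
\|\phi_l\ast T_a f\|_{L^p}\lesssim\sum_{j\geq 0}2^{-N|l-j|}\,2^{-j\varepsilon}\,\|\phi_j\ast f\|_{L^p}.
\]
Taking the $\ell^t$-quasinorm in $l$ and applying a Young-type convolution on $\mathbb{Z}$ (valid for all $0<t\leq\infty$) reduces the theorem to a weighted $\ell^q\to\ell^t$ inequality for the scalar sequence $\{\|\phi_j\ast f\|_{L^p}\}_j$. In case (1) the summable weight $2^{-j\varepsilon}$ closes the estimate for arbitrary $q,t\in(0,\infty]$ via H\"older; in case (2) the absence of a weight forces $q\leq t$ so that the embedding $\ell^q\hookrightarrow\ell^t$ applies. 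I expect the main technical obstacle to be the frequency-localization display: the $x$-dependence of a type-$(\rho,\rho)$ symbol spreads frequencies on scale $2^{j\rho}$, and producing sufficiently rapid decay in $|l-j|$ requires careful integration by parts in both the low- and high-frequency directions, with $\rho<1$ used essentially in each.
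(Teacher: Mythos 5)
Your reduction to $s_1=s_2=0$ and the overall plan (frequency decomposition of the symbol in $\xi$, a localization estimate plus a critical $h^p$ bound, then a scalar convolution inequality on $\mathbb{Z}$) are in the right spirit, and the assembly step is genuinely cleaner for Besov spaces than the paper's write-up, since the $\ell^q(L^p)$ structure does reduce everything to a weighted $\ell^q\to\ell^t$ estimate. But I see two substantive issues.

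First, the frequency-localization display is exactly where the paper's paradifferential decomposition is hiding, and you cannot get it ``for free'' from an integration by parts in the kernel of $T_{a_j}$. The distribution $T_{a_j}g$ does \emph{not} have compactly supported Fourier transform: the symbol $a_j$ has nontrivial $x$-dependence, whose $x$-Fourier transform is only a tempered distribution, and it spreads the output frequency. To make precise the claim that the spread is of order $2^{j\rho}$ and hence harmless when $\rho<1$, you have to localize $a_j$ also in the $x$-frequency (i.e., write $a_j=\sum_{j'}\phi_{j'}\ast a(\cdot,\xi)\,\widehat{\phi_j}(\xi)$) and then separately estimate the pieces with $j'\gg j$ (your $l\gg j$ regime, the paper's $a^{(1)}$), $j'\sim j$ (kernel estimates with a compact-support limiting argument, the paper's $a^{(2)}$), and $j'\ll j$ (the critical piece with annulus-supported output, the paper's $a^{(3)}$). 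Once you do that you have in effect reproduced Sections 3.2--3.4. Moreover the right-hand side of your localization bound should be $\|\phi_j\ast g\|_{L^p}$ rather than $\|T_{a_j}g\|_{L^p}$: the latter can be artificially small due to cancellation while the shifted-frequency tail of $T_{a_j}g$ is not correspondingly small, and in any case it is $\|\phi_j\ast g\|_{L^p}$ that closes the convolution.

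Second, for $p<1$ the diagonal term $|l-j|<c_\rho$ is not handled by your two displays as written: there is no Young's inequality on $L^p$, so $\|\phi_l\ast T_{a_j}f\|_{L^p}\lesssim\|T_{a_j}f\|_{L^p}$ fails. You need either the annulus-supported Fourier transform to invoke Peetre's maximal inequality (which again requires the paradifferential split), or to observe that $\phi_l(D)\,T_{a_j}\in Op\mathcal{S}^m_{\rho,\rho}$ uniformly in $l,j$ via symbolic calculus and then apply the P\"aiv\"arinta--Somersalo $h^p$ bound to the composed operator. Neither fix is stated. Relatedly, for $p<1$ the passage from $\phi_l\ast T_af=\sum_j\phi_l\ast T_{a_j}f$ to a sum of $L^p$-quasinorms requires the $p$-triangle inequality $\|\sum u_j\|_{L^p}^p\le\sum\|u_j\|_{L^p}^p$, so your convolution on $\mathbb{Z}$ must be carried out at the level of the $p$-th powers; that is fine but it is a step that needs to be said. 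In short: the outline is correct and the high-level organization is a nice alternative, but the heart of the argument is concentrated in the frequency-localization estimate, and filling that gap brings you back to the paper's three-way decomposition of the symbol.
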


When $p=\infty$ the same boundedness results hold, but due to different definition and ideas of proof, we state the results separately. 

\begin{theorem}\label{mmm}
Let $0<\rho<1$, $0<q,t\leq\infty$ and $s_1,s_2\in\mathbb{R}$. Suppose $m\in\mathbb{R}$ satisfies $(\ref{ccexample})$ and $a\in\mathcal{S}_{\rho,\rho}^{m}$. Then $T_a$ maps $F_{\infty}^{s_1,q}$ into $F_{\infty}^{s_2,t}$.
\end{theorem}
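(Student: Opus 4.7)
The plan is to reduce Theorem \ref{mmm} to a single endpoint and then estimate the Carleson functional that defines $\|\cdot\|_{F_\infty^{s,q}}$ via a Littlewood--Paley decomposition of the symbol adapted to the $(\rho,\rho)$-scaling.

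\emph{Reductions.} Conjugating $T_a$ by Bessel potentials $(I-\Delta)^{\pm\sigma/2} \in Op\mathcal{S}_{1,0}^{\pm\sigma}\subset Op\mathcal{S}_{\rho,\rho}^{\pm\sigma}$, whose symbols depend only on $\xi$ so that composition merely multiplies symbols, and using that these potentials are isomorphisms $F_\infty^{s,r}\to F_\infty^{s\mp\sigma,r}$, I reduce to $s_1=s_2=0$. The inclusion $\mathcal{S}_{\rho,\rho}^m\subset\mathcal{S}_{\rho,\rho}^{-d(1-\rho)/2}$ for $m$ below the threshold reduces the strict-inequality case to the endpoint $m=-d(1-\rho)/2$, and the monotone embedding $F_\infty^{0,q_1}\hookrightarrow F_\infty^{0,q_2}$ for $q_1\le q_2$ concentrates the proof on one extremal pair $(q,t)$.

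\emph{Main estimate.} Decompose $a=\sum_{j\ge 0}a_j$ with $\mathrm{supp}_\xi a_j\subset\{|\xi|\sim 2^j\}$. Integration by parts in $\xi$ yields, for every $N$,
\[|K_{a_j}(x,y)|\lesssim 2^{j(d+m)}\bigl(1+2^{j\rho}|x-y|\bigr)^{-N},\]
so the characteristic spread is $2^{-j\rho}$ rather than the Calder\'on--Zygmund scale $2^{-j}$. Frequency support then reduces $\phi_k*T_a f$ to $T_{a_k}(\widetilde\phi_k*f)$ up to fast-decaying tails. Fix a dyadic cube $P$ with $l(P)=2^{-\nu}$, $\nu\ge 1$, and for each $k\ge\nu$ split $f=f_k^{\mathrm{near}}+f_k^{\mathrm{far}}$ relative to a concentric cube $P_k^*$ of side $\max(2^{-\nu},2^{-k\rho})$. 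The far part is controlled by the polynomial kernel decay summed geometrically against the $L^\infty$-envelope of $f$, itself dominated by $\|f\|_{F_\infty^{0,q}}$. The near part uses the Calder\'on--Vaillancourt $L^2$-bound $\|T_{a_k}\|_{L^2\to L^2}\lesssim 2^{km}=2^{-kd(1-\rho)/2}$ together with the Carleson identity $\frac{1}{|P_k^*|}\int_{P_k^*}|\phi_k*f|^2\lesssim\|f\|_{F_\infty^{0,2}}^2$ extracted from the definition.

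\emph{Main obstacle.} The hardest step is summing over $k\ge\nu$. The enlargement ratio $|P_k^*|/|P|=2^{d(\nu-k\rho)_+}$ competes with the squared $L^2$-gain $2^{-kd(1-\rho)}$, and the balance is precisely critical at $m=-d(1-\rho)/2$; the intermediate range $\nu\le k\le\nu/\rho$, where the kernel spread exceeds $l(P)$, requires an iterated Carleson argument on cubes of intermediate sizes, carefully tracking averages across the scales $2^{-\nu},2^{-k\rho},2^{-k}$. For the subrange of parameters $q,t\in[1,\infty]$ one may alternatively exploit the duality $(F_1^{-s,r})^*=F_\infty^{s,r'}$ and apply Theorem \ref{main} with $p=1$ (case (3)) to the adjoint $T_a^*$, which lies in $Op\mathcal{S}_{\rho,\rho}^m$ by the calculus established in the appendix, but the quasi-Banach regime $\min(q,t)<1$ forces the direct Carleson argument just described.
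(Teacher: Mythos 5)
Your reductions (Bessel-potential conjugation to $s_1=s_2=0$, dropping to the endpoint $m=-d(1-\rho)/2$, then to a single extremal $(q,t)$ via $F_\infty^{0,q_1}\hookrightarrow F_\infty^{0,q_2}$) match the paper, and the kernel estimate $|K_{a_j}(x,y)|\lesssim 2^{j(d+m)}(1+2^{j\rho}|x-y|)^{-N}$ is correct. The duality remark via $(F_1^{-s,r})^*=F_\infty^{s,r'}$ and Theorem \ref{main}(3) at $p=1$ is also sound for $\min(q,t)>1$. But the extremal case the reduction forces you into is $q=\infty$, $t<1$, which lies outside that range, and the direct Carleson argument you propose there has a genuine gap that is not just ``careful bookkeeping.''

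The gap is exactly where you flag it. With $P_k^*$ of side $R=\max(2^{-\nu},2^{-k\rho})$ and $m=-d(1-\rho)/2$, the far contribution for $x\in P$ is controlled by
\[
|T_{a_k}f_k^{\mathrm{far}}(x)|\lesssim \|f_k\|_{L^\infty}\,2^{k(d+m)}\int_{|x-y|\gtrsim R}\bigl(1+2^{k\rho}|x-y|\bigr)^{-N}dy\lesssim \|f_k\|_{L^\infty}\,2^{kd(1-\rho)/2}\,\bigl(2^{k\rho}R\bigr)^{d-N}.
\]
In the range $\nu\le k\le\nu/\rho$ one has $R=2^{-k\rho}$, so $2^{k\rho}R\sim 1$ and no amount of $N$ helps: the bound degenerates to $2^{kd(1-\rho)/2}\|f_k\|_{L^\infty}$, which \emph{grows} in $k$ and is not summable. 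Enlarging $P_k^*$ to restore far-part decay immediately spoils the near-part competition between $|P_k^*|/|P|$ and the $L^2$-gain $2^{-kd(1-\rho)}$, which you correctly observe is already exactly critical. ``Iterated Carleson argument tracking averages across scales $2^{-\nu},2^{-k\rho},2^{-k}$'' is a name for the difficulty, not a resolution of it. The paper closes this with a device absent from your outline: for each fixed $Q$ it inserts one bump $\Psi_Q$ at scale $l(Q)^\rho$ (not $2^{-k\rho}$), estimates the two commutator errors $\phi_k*(T_{b_k}f_k)\Psi_Q-\phi_k*(\Psi_Q T_{b_k}f_k)$ and $\Psi_Q T_{b_k}f_k-T_{b_k}(f_k\Psi_Q)$, and — crucially — uses Fefferman's sharp $L^\infty$ bound $\|T_bf\|_{L^\infty}\lesssim\|b\|_{\mathcal{S}_{\rho,\rho}^{-d(1-\rho)/2}}\|f\|_{L^\infty}$ for annulus-supported symbols (estimate (\ref{flemma})). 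That $L^\infty$ estimate, not polynomial kernel decay, is what neutralizes the intermediate scales; it plays no role in your argument and would need to be supplied.

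A second, smaller issue: decomposing $a=\sum_j a_j$ only in $\xi$ does not justify ``frequency support reduces $\phi_k*T_af$ to $T_{a_k}(\widetilde\phi_k*f)$ up to fast-decaying tails.'' Because $a_j(\cdot,\xi)$ has $x$-frequency content up to (and beyond, with polynomial decay) $2^{j\rho}$, $\widehat{T_{a_j}g}$ is not compactly supported; the output at frequency $\gg 2^j$ is precisely the paper's $T^{(1)}$ piece, which is handled by a separate argument exploiting the vanishing moment of $\phi_j$ via Taylor expansion (estimate (\ref{firstkey})). The two-parameter paradifferential split $a_{j,k}=\phi_j*a(\cdot,\xi)\,\widehat{\phi}_k(\xi)$ and the three resulting operators $T^{(1)},T^{(2)},T^{(3)}$ are how those tails are actually quantified; a single-parameter $\xi$-decomposition hides this.
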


\begin{theorem}\label{mmmm}
Let $0<\rho<1$, $0<q,t\leq \infty$, and $s_1,s_2\in\mathbb{R}$. Suppose $m\in\mathbb{R}$ satisfies $(\ref{ccexample})$ and $a\in\mathcal{S}_{\rho,\rho}^{m}$. Then $T_a$ maps $B_{\infty}^{s_1,q}$ into $B_{\infty}^{s_2,t}$ if one of the following conditions holds.
\begin{enumerate}
\item  $m-s_1+s_2<-d(1-\rho)/2$,
\item  $q\leq t$ and $m-s_1+s_2=-d(1-\rho)/2$.
\end{enumerate}
\end{theorem}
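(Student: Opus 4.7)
The plan is to first reduce case (1) to case (2) by a slack-and-embed argument and then prove case (2) by a Littlewood--Paley reduction to a single core estimate. For (1), given the strict inequality $m - s_1 + s_2 < -d(1-\rho)/2$, I would choose $\tilde s_2 > s_2$ with $m - s_1 + \tilde s_2 = -d(1-\rho)/2$ and apply case (2) with parameters $(s_1, \tilde s_2, q, \infty)$ (the constraint $q \le \infty$ being trivial) to obtain $T_a : B_\infty^{s_1,q} \to B_\infty^{\tilde s_2, \infty}$; the monotonicity embedding $B_\infty^{\tilde s_2,\infty} \hookrightarrow B_\infty^{s_2, t}$, valid for any $t$ since $\tilde s_2 > s_2$, closes the argument. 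For (2), the embedding $\ell^q \hookrightarrow \ell^t$ implies $B_\infty^{s_1, q} \hookrightarrow B_\infty^{s_1, t}$, so it suffices to treat the diagonal case $q = t$.

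To prove $T_a : B_\infty^{s_1,t} \to B_\infty^{s_2,t}$ under $m - s_1 + s_2 = -d(1-\rho)/2$, I would decompose the symbol dyadically in $\xi$ as $a = \sum_{j \ge 0} a^j$ with $a^j$ supported in $\{|\xi| \sim 2^j\}$ for $j \ge 1$. Because $a \in \mathcal{S}^m_{\rho,\rho}$, the $x$-Fourier transform of $a^j(\cdot,\xi)$ decays rapidly outside $\{|\eta| \lesssim 2^{j\rho}\}$, so $T_{a^j} f$ is Fourier-supported essentially in $\{|\zeta| \sim 2^j\}$ (using $2^{j\rho} \ll 2^j$). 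Consequently, for each $k$ only $O(1)$ indices $j \approx k$ contribute and $\phi_k \ast T_a f$ is controlled by $\phi_k \ast T_{a^k}(\widetilde{\phi}_k \ast f)$, where $\widetilde{\phi}_k$ is a fattened Littlewood--Paley bump at scale $2^k$. The core estimate is then
\[
\|T_{a^k} g\|_{L^\infty(\mathbb{R}^d)} \lesssim 2^{km + k d(1-\rho)/2}\, \|g\|_{L^\infty(\mathbb{R}^d)}
\qquad \text{when } \operatorname{supp} \widehat g \subset \{|\xi| \sim 2^k\}.
\]
Summing the resulting bound $\|\phi_k \ast T_a f\|_\infty \lesssim 2^{(m + d(1-\rho)/2) k} \|\widetilde{\phi}_k \ast f\|_\infty$ in $\ell^t_k$ with weight $2^{s_2 k}$ and invoking $m - s_1 + s_2 = -d(1-\rho)/2$ gives $\|T_a f\|_{B_\infty^{s_2,t}} \lesssim \|f\|_{B_\infty^{s_1, t}}$.

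The main obstacle is the core estimate. The pointwise kernel bound $|K_k(x,y)| \lesssim_N 2^{k(m+d)}(1 + 2^{k\rho}|x-y|)^{-N}$, produced by repeated integration by parts in $\xi$, integrates only to the naive $L^1$-norm $\int |K_k(x,y)|\, dy \lesssim 2^{km + k d(1-\rho)}$, exceeding the target by a factor $2^{k d(1-\rho)/2}$. The gain should come from a two-step procedure. Locally, on a ball $B$ of radius $\sim 2^{-k\rho}$, Cauchy--Schwarz gives $\|g \chi_B\|_{L^2} \lesssim 2^{-k d \rho/2} \|g\|_\infty$; the Calder\'on--Vaillancourt bound $\|T_{a^k}\|_{L^2 \to L^2} \lesssim 2^{km}$ followed by Bernstein's inequality $\|h\|_\infty \lesssim 2^{kd/2} \|h\|_2$ (legitimate because $T_{a^k}$ outputs Fourier-localized functions at $2^k$) produces the desired factor $2^{km + k d(1-\rho)/2}$. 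Globally, one then organizes a partition of unity into cubes of side $\sim 2^{-k\rho}$ and combines the per-cube $L^2$-based estimate with the pointwise off-diagonal decay of the kernel, so that the contributions of distant cubes remain summable and the improved rate persists. Making this global summation rigorous---ensuring that the $L^2$-Bernstein gain is not destroyed upon summing the cubes---is the delicate technical step.
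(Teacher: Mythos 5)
Your overall plan is the right one and essentially matches the paper's: dyadically decompose the symbol in the frequency variable, isolate a single-scale $L^\infty$ estimate, and sum in $\ell^t_k$. The core estimate you identify,
\[
\|T_{a^k} g\|_{L^\infty}\lesssim 2^{k(m+d(1-\rho)/2)}\|g\|_{L^\infty},\qquad \operatorname{supp}\widehat g\subset\{|\xi|\sim 2^k\},
\]
is exactly Fefferman's $L^\infty$ bound for annular symbols of critical order $-d(1-\rho)/2$ --- the inequality labeled (\ref{flemma}) in the paper, which is cited rather than re-proved. Your sketch of proving it via local Cauchy--Schwarz, Calder\'on--Vaillancourt, Bernstein, and an off-diagonal kernel bound is in fact the shape of Fefferman's original argument, but since you yourself flag the gluing of cubes as the ``delicate technical step'' without carrying it out, the proposal would be more self-contained if it simply quoted (\ref{flemma}). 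The reduction of case (1) to case (2) by inserting $\tilde s_2>s_2$ and embedding $B_\infty^{\tilde s_2,\infty}\hookrightarrow B_\infty^{s_2,t}$ is valid, and so is the reduction of case (2) to $q=t$ by $\ell^q\hookrightarrow\ell^t$.

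The genuine gap is the sentence that $T_{a^j}f$ is ``Fourier-supported essentially in $\{|\zeta|\sim 2^j\}$,'' and the consequent claim that for each output scale only $O(1)$ diagonal indices contribute. Because the $x$-Fourier transform of a symbol in $\mathcal{S}^m_{\rho,\rho}$ is merely rapidly decaying, not compactly supported, $\phi_k\ast T_{a^j}f$ is generically nonzero for \emph{every} pair $(j,k)$, and you need an actual decay estimate in $|j-k|$ before you can sum. There are two distinct off-diagonal regimes, and they require different arguments: when the output scale exceeds the input scale ($k\gg j$), the modulation of $a^j(\cdot,\xi)$ by its high $x$-frequency tail is what lands the output at scale $2^k$, and one extracts decay $\sim 2^{-N(k-j\rho)}$ from the symbol estimates together with the cancellation of the Littlewood--Paley pieces; when the output scale is below the input ($k\ll j$), a cancellation between $\eta$ and $\xi$ is needed and again yields rapid decay. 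This is precisely the content of the paper's paradifferential splitting $a=a^{(1)}+a^{(2)}+a^{(3)}$ (a decomposition in both $x$ and $\xi$), with $T^{(1)}$ handled by the rapid decay estimate (\ref{firstkey}) and $T^{(3)}$ by Fefferman's bound (\ref{flemma}); the diagonal $T^{(2)}$ is where your picture is literally correct. Without some version of this two-variable decomposition, the ``essentially supported'' step is not a proof, and it is exactly the part of the argument where the structure of $\mathcal{S}^m_{\rho,\rho}$ symbols (as opposed to exact compact $x$-frequency support) is being used.
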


\begin{remark}
All of our results are sharp in the sense that the hypothesis (\ref{ccexample}) is necessary and when the equality of (\ref{ccexample}) holds the restrictions on $q,t$ are necessary. 
To be specific, we will prove that  the boundedness results fail with $c_{m,\rho}(D)\in Op\mathcal{S}_{\rho,0}^{m}$, defined in (\ref{multiplierexample}), provided that the assumptions do not work.
\end{remark}
\begin{theorem}\label{sharptheorem1}
Let $0<\rho<1$, $0<p,q,t\leq \infty$,  $s_1,s_2\in\mathbb{R}$, and $m\in\mathbb{R}$. Define $c_{m,\rho}(D)\in \mathcal{S}_{\rho,0}^{m}$ as in $(\ref{multiplierexample})$.
Then 
\begin{equation*}
\Vert c_{m,\rho}(D)\Vert_{F_p^{s_1,q}\to F_p^{s_2,t}}=\infty
\end{equation*} if one of the followings holds.
\begin{enumerate}
\item $m-s_1+s_2>-d(1-\rho)\big|1/2-1/p \big|$,
\item  $m-s_1+s_2=-d(1-\rho)\big(1/p-1/2 \big)$, $0<p\leq 2$, $0<q\leq \infty$, and $t<p$,
\item  $m-s_1+s_2=-d(1-\rho)\big(1/2-1/p \big)$, $2\leq p< \infty$, $0<t\leq \infty$, and $p<q$.
\end{enumerate}
\end{theorem}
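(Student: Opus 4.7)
My plan is to construct frequency-localized test functions and compute the Triebel-Lizorkin norms on both sides. The essential reduction is the following frequency-localization principle: if $\widehat{f_k}$ is essentially supported in the annulus $\{2^{k-1}<|\xi|<2^{k+1}\}$, then only the $k$-th Littlewood-Paley piece is non-trivial and
$$\|f_k\|_{F_p^{s_1,q}}\sim 2^{ks_1}\|f_k\|_{L^p}, \quad \|c_{m,\rho}(D)f_k\|_{F_p^{s_2,t}}\sim 2^{ks_2}\|c_{m,\rho}(D)f_k\|_{L^p},$$
with no dependence on $q,t$ (the smooth multiplier preserves frequency support). Hence, for single-scale tests, the operator ratio is $2^{k(s_2-s_1)}R_k$ with $R_k=\|c_{m,\rho}(D)f_k\|_{L^p}/\|f_k\|_{L^p}$.

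For (1), I invoke the classical sharp $L^p$ lower bound for $e^{-2\pi i|D|^{1-\rho}}(1+|D|^2)^{m/2}$ due to Hirschman, Wainger, Stein, and Fefferman: there exist frequency-$2^k$ test functions with $R_k\gtrsim 2^{k[m+d(1-\rho)|1/2-1/p|]}$. A stationary-phase analysis of the kernel $K_k$ gives magnitude $\sim 2^{k[m+d(1+\rho)/2]}$ on the shell $|x|\sim 2^{-k\rho}$ of width $2^{-k\rho}$, whence $\|K_k\|_{L^1}\sim 2^{k[m+d(1-\rho)/2]}$ realizes the $L^\infty\to L^\infty$ operator norm; Riesz-Thorin interpolation against the $L^2$ bound and duality then produce the intermediate $L^p$ lower bound via Khintchine-randomized wave-packet sums of kernel translates. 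Consequently the TL ratio exceeds $2^{k[m-s_1+s_2+d(1-\rho)|1/2-1/p|]}$, which tends to infinity as $k\to\infty$ under hypothesis (1).

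For (2) and (3) at the endpoint, $R_k$ is asymptotically constant in $k$, so one must superpose across many scales. By the duality $(F_p^{s,q})^*=F_{p'}^{-s,q'}$ and the fact that the adjoint of $c_{m,\rho}(D)$ is the multiplier with conjugate symbol of the same class, the parameter correspondences $p\leftrightarrow p'$, $q\leftrightarrow t'$, $t\leftrightarrow q'$ convert (3) into (2), so I focus on (3). Set $f=\sum_{k\in K_N}g_k$ for a lacunary $K_N=\{k_1<\cdots<k_N\}$, with $g_k$ a Khintchine-randomized sum of Hirschman extremizer atoms at frequency $2^k$, packed on a $2^{-k\rho}$-grid covering a common fixed region $E$. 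Choose frequency directions so that the stationary-phase shifts $y_k=(1-\rho)2^{-k\rho}\hat\xi_{0,k}$ carry the output supports $E+y_k$ into nearly pairwise disjoint regions across $k$. Normalizing $\|g_k\|_{L^p}=2^{-ks_1}$, the Khintchine lemma yields $2^{ks_1}|g_k|\sim |E|^{-1/p}\chi_E$, and the internal $l^q$-aggregate of $N$ overlapping profiles gives $\|f\|_{F_p^{s_1,q}}\sim N^{1/q}$. On the output side, the endpoint condition yields $\|c_{m,\rho}(D)g_k\|_{L^p}\sim 2^{-ks_2}$, and approximate disjointness collapses $l^t$ over disjointly supported pieces to an $L^p$ aggregate, producing $\|c_{m,\rho}(D)f\|_{F_p^{s_2,t}}\sim N^{1/p}$ independently of $t$. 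The resulting operator ratio $N^{1/p-1/q}\to\infty$ since $q>p$.

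The hard part is the geometric coordination in the multi-scale construction for (2)-(3): one must simultaneously choose the common region $E$ (with $|E|$ comparable to the smallest shift, roughly $2^{-k_N\rho d}$), the spacing of the lacunary $K_N$, and the frequency directions $\hat\xi_{0,k}$, so that the inputs mutually overlap on $E$ while the outputs disperse into disjoint sets, and the Khintchine randomization preserves the Hirschman extremizer property after the stationary-phase shift. Reconciling these competing spatial-frequency constraints, and verifying that the randomized atoms actually realize the sharp $R_k$ lower bound at each scale, is the technical core of the argument.
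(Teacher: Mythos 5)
Your single-scale reduction for part (1) is a legitimate alternative to the paper's route. The paper instead cites the known failure of $h^p$-boundedness of $c_{m-2\epsilon,\rho}(D)$ (Hirschman, Wainger, Fefferman, Miyachi), produces a witness $g$ by attaching a smooth $2^{-\epsilon k}$-weight to the Littlewood--Paley pieces of that counterexample, and then uses the embeddings $F_p^{\epsilon,2}\hookrightarrow F_p^{0,q}$ and $F_p^{0,t}\hookrightarrow F_p^{-\epsilon,2}$. Your version, by contrast, needs a quantitative single-scale \emph{lower} bound $R_k\gtrsim 2^{k[m+d(1-\rho)|1/2-1/p|]}$, and the justification you give for it is not sound as written: Riesz--Thorin interpolation and duality yield \emph{upper} bounds, not lower bounds, and the case $0<p<1$ (where one must work with $h^p$ quasi-norms rather than $L^p$) is not addressed. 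This is repairable---the stationary-phase extremizer at scale $k$ does realize the bound for $1\le p\le\infty$, and Miyachi's analysis covers $p<1$---but it requires an actual construction, not an appeal to interpolation.

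For parts (2) and (3) your proposal has two genuine gaps. First, the geometry: you put $g_k$ on a $2^{-k\rho}$-grid in a region $E$ of volume $\sim 2^{-k_N\rho d}$, the smallest shift scale. For every $k<k_N$ the grid spacing $2^{-k\rho}$ exceeds $\operatorname{diam}(E)\sim 2^{-k_N\rho}$, so at most one atom fits per coarser scale and the claimed $L^p$ normalization $\|g_k\|_{L^p}\sim 2^{-ks_1}$ cannot be arranged uniformly. Worse, the output of the multiplier at frequency $2^k$ is \emph{not} supported near a single translate $E+y_k$: the kernel of $e^{-2\pi i|D|^{1-\rho}}$ localized to frequency $2^k$ lives on a spherical shell of radius $\sim 2^{-k\rho}$ and width $\sim 2^{-k}$. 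For $k<k_N$ that shell has radius $\gg |E|^{1/d}$, so the supports of $c_{m,\rho}(D)g_k$ for different $k$ overlap heavily rather than disperse into disjoint sets, and the step that collapses the $\ell^t$ aggregate to an $\ell^p$ aggregate fails. This is exactly why the paper instead relies on the randomized Christ--Seeger construction (Theorem~\ref{Se}): there the outputs at different $k$ are \emph{not} disjoint, and the lower bound $\gtrsim L^{1/t}$ comes from showing probabilistically that a positive fraction of points sees comparable contributions from all $L$ scales. Second, your direction of the duality reduction is backwards. Deriving (2) from (3) via $(F_p^{0,q})^*=F_{p'}^{0,q'}$ is only available for $1<p,q<\infty$; (2) allows $p\le1$, $t\le1$, and $q=\infty$, which you cannot reach. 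The paper goes the other way: it takes (2) as input (from Christ--Seeger, valid for $0<t<p\le2$) and obtains (3) through a finite-dimensional duality in $L^p(\ell^r)$ versus $L^{p'}(\ell^{r'})$, which stays safely inside the reflexive range.
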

\begin{theorem}\label{sharptheorem2}
Let $0<\rho<1$, $0<p,q,t\leq \infty$,  $s_1,s_2\in\mathbb{R}$, and $m\in\mathbb{R}$. Define $c_{m,\rho}(D)\in \mathcal{S}_{\rho,0}^{m}$ as in $(\ref{multiplierexample})$. Then
  \begin{equation*}
  \Vert c_{m,\rho}(D)\Vert_{B_p^{s_1,q}\to B_p^{s_2,t}}=\infty
  \end{equation*} if one of the followings holds.
\begin{enumerate}
\item $m-s_1+s_2>-d(1-\rho)\big|1/2-1/p \big|$,
\item $m-s_1+s_2=-d(1-\rho)\big|1/2-1/p \big|$ and $q>t$.
\end{enumerate}

\end{theorem}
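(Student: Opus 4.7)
The plan is to reduce both cases to a one-scale sharpness statement for $c_{m,\rho}(D)$ on Littlewood--Paley blocks. The classical work of Wainger \cite{Wa}, Fefferman \cite{Fe0}, and Hirschman \cite{Hi} produces, for each $k\ge 1$, a Schwartz function $g_k$ with $\widehat{g_k}$ supported in the annulus $\{|\xi|\sim 2^k\}$ satisfying the extremal lower bound
\[
\|c_{m,\rho}(D) g_k\|_{L^p} \gtrsim 2^{km}\, 2^{kd(1-\rho)|1/2-1/p|}\, \|g_k\|_{L^p},
\]
with implicit constants independent of $k$. For $p>2$ one takes a Knapp-type wave packet adapted to a thin cap on the sphere of radius $2^k$; for $p<2$ one uses the dual example obtained by stationary-phase analysis of the convolution kernel of $e^{-2\pi i|D|^{1-\rho}}$ at frequency scale $2^k$. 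Granted this single-scale estimate, the Besov sharpness is routine.

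For part (1), normalize $\|g_k\|_{L^p} = 2^{-ks_1}$. Since $\widehat{g_k}$ (and hence $\widehat{c_{m,\rho}(D) g_k}$) lies in a single Littlewood--Paley annulus, only a bounded number of blocks contribute, giving
\[
\|g_k\|_{B_p^{s_1,q}} \sim 1,\qquad \|c_{m,\rho}(D) g_k\|_{B_p^{s_2,t}} \gtrsim 2^{k(m-s_1+s_2+d(1-\rho)|1/2-1/p|)}.
\]
Under the hypothesis $m-s_1+s_2 > -d(1-\rho)|1/2-1/p|$ the exponent on the right is positive, so letting $k\to\infty$ forces $\|c_{m,\rho}(D)\|_{B_p^{s_1,q}\to B_p^{s_2,t}}=\infty$.

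For part (2), assume the critical equality $m-s_1+s_2 = -d(1-\rho)|1/2-1/p|$ together with $q>t$. Choose a rapidly lacunary sequence $k_1<k_2<\cdots<k_N$ so that the annuli $\{|\xi|\sim 2^{k_j}\}$ are pairwise disjoint and the blocks $g_{k_j}$ occupy distinct Littlewood--Paley bands, and set $f=\sum_{j=1}^N g_{k_j}$ with $\|g_{k_j}\|_{L^p}=2^{-k_j s_1}$. Disjointness of frequency supports yields
\[
\|f\|_{B_p^{s_1,q}} \sim \Big(\sum_{j=1}^N 1\Big)^{1/q}=N^{1/q},
\]
while the single-scale lower bound together with the critical equality gives $2^{k_j s_2}\|c_{m,\rho}(D) g_{k_j}\|_{L^p}\gtrsim 1$ for every $j$, hence $\|c_{m,\rho}(D) f\|_{B_p^{s_2,t}}\gtrsim N^{1/t}$. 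The ratio $N^{1/t-1/q}$ is unbounded as $N\to\infty$ precisely because $q>t$, proving the second assertion.

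The main obstacle is producing the sharp one-scale bound of the first paragraph; all subsequent combinatorics of the Besov indices is purely algebraic. The exponent $d(1-\rho)|1/2-1/p|$ arises solely from balancing the spatial localization $\sim 2^{-k}$ of the wave packet against the curvature scale $\sim 2^{-k\rho}$ produced by the oscillatory factor $e^{-2\pi i|\xi|^{1-\rho}}$, which is the classical Wainger--Fefferman computation. No further ingredient beyond that is required for the Besov endpoint statement.
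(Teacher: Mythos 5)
Your argument is correct, and the underlying engine — single-scale stationary-phase lower bounds for the kernel of $e^{-2\pi i|D|^{1-\rho}}$, transferred to $p>2$ by duality/Knapp — is the same one the paper exploits. The organization differs in two respects. For part (1) the paper does not construct explicit test functions at all: it invokes the known failure of $c_{m-2\epsilon,\rho}(D)$ on $h^p$ and then sandwiches via the embeddings $F_p^{\epsilon,p}\hookrightarrow B_p^{0,q}$ and $B_p^{0,t}\hookrightarrow F_p^{-\epsilon,p}$; your direct single-band construction is more self-contained and avoids the $h^p$ citation, at the cost of having to justify the one-scale extremal bound. For part (2) the paper takes a single function $h^{(L)}=\sum_{k=10}^{L}k^{-1/t}2^{kd/p}\widetilde{\eta}(2^k\cdot)$ (consecutive scales, $k^{-1/t}$ weights, divergence rate $(\log L)^{1/t}$), whereas you take an unweighted lacunary sum over $N$ disjoint Littlewood--Paley bands and read off $N^{1/t-1/q}$; these are interchangeable, since the only thing needed is $\ell^q\not\hookrightarrow\ell^t$ when $q>t$. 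Two small points to tighten: (i) the classical references establish $L^p$ unboundedness of the full operator but do not literally state the single-scale lower bound $\|c_{m,\rho}(D)g_k\|_{L^p}\gtrsim 2^{km}2^{kd(1-\rho)|1/2-1/p|}\|g_k\|_{L^p}$ with $k$-uniform constants; the cleanest justification is the explicit stationary-phase bound $|K_k(x)|\gtrsim 2^{kd(1+\rho)/2}$ on the annulus $|x|\sim 2^{-k\rho}$, which the paper spells out and which your argument implicitly relies on. (ii) When reading off $\|c_{m,\rho}(D)g_k\|_{B_p^{s_2,t}}\gtrsim 2^{ks_2}\|c_{m,\rho}(D)g_k\|_{L^p}$ you should note that this uses the quasi-triangle inequality and the fact that only $O(1)$ Littlewood--Paley blocks of $c_{m,\rho}(D)g_k$ are nonzero, which holds because $c_{m,\rho}(D)$ is Fourier-supported exactly where $\widehat{g_k}$ is.
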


This paper is organized as follows. We will give some preliminary results in Section \ref{property} and prove Theorem \ref{main}-\ref{mmmm} in Section \ref{basic}-\ref{infinityproof}.
 We construct some counter examples to prove Theorem \ref{sharptheorem1} and \ref{sharptheorem2} in Section \ref{sharpresult}.
 We give some remarks on pseudo-differential operators of type $(1,1)$ in Section \ref{type11}.
In Appendix we will discuss how our boundedness results can be extended to compound symbols in $\mathcal{S}_{\rho,\rho,\rho}^{m}$.

\section{\textbf{Preliminary results}}\label{property}

\subsection{Composition of pseudo-differential operators}
\hfill

In \cite[p14]{Ta}, symbolic calculus gives that if $T_j\in Op\mathcal{S}_{\rho_j,\delta_j}^{m_j}$ with $0\leq \delta_2<\rho_1\leq 1$ for $j=1,2$, then \begin{equation}\label{comcomcom}
T_1\circ T_2\in Op\mathcal{S}_{\rho,\delta}^{m_1+m_2}
\end{equation} where $\rho=\min\{\rho_1,\rho_2\}$ and $\delta=\max\{\delta_1,\delta_2\}$.
Thus, when we define 
\begin{equation*}
n_s(\xi):=\big(1+|\xi|^2\big)^{{s}/{2}}
\end{equation*}  we have 
\begin{equation*}
n_{s_1}(D)T_an_{s_2}(D)\in Op\mathcal{S}_{\rho,\rho}^{m+s_1+s_2}
\end{equation*}   and  \begin{equation*}
\big\Vert   T_af\big\Vert_{F_{p}^{s_2,2}} \approx \big\Vert n_{s_2}(D)T_an_{-s_1}(D)n_{s_1}(D)f\big\Vert_{h^p}\lesssim \big\Vert  n_{s_1}f \big\Vert_{h^p}\approx \big\Vert f  \big\Vert_{F_{p}^{s_1,2}}
\end{equation*}from the $h^p$ boundedness  with the hypothesis (\ref{ccexample}). 
Therefore
 \begin{equation}\label{finalcoral}
 T_a\in Op\mathcal{S}_{\rho,\rho}^{m} ~\text{maps}~ F_p^{s_1,2}~\text{into}~ F_p^{s_2,2}
 \end{equation} if (\ref{ccexample}) holds.
 
 This allows us to assume $s_1=s_2=0$ when we proceed for $0<\rho<1$ in this paper.

\subsection{$F$-spaces characterized by $L^p(l^q)$}\label{dual}\cite[p50]{Tr}
\hfill

One obtains an alternative description of $F_p^{s,q}$ spaces in the case $1<p,q<\infty$.
\begin{align*}
&F_p^{s,q}(\mathbb{R}^d)\\
&=\Big\{ f\in S'(\mathbb{R}^d):   \exists\{f_k\}_{k=0}^{\infty}\subset L^p(\mathbb{R}^d) ~\text{s.t.}~  f=\sum_{k=0}^{\infty}{\phi_k\ast f_k}  ~\text{in}~ S'(\mathbb{R}^d), ~\Vert \{2^{sk}f_k\}\Vert_{L^p(l^q)}<\infty   \Big\}\nonumber
\end{align*} and furthermore \begin{equation}\label{newdual}
\Vert f \Vert_{F_p^{s,q}}\approx \inf{\big\Vert \{2^{sk}f_k\} \big\Vert_{L^p(l^q)}}
\end{equation} where the infimum is taken over all of such admissible representations of $f$.

\subsection{Maximal inequalities}\cite{Pe, Tr}
\hfill

Denote by $\mathcal{M}$ the Hardy-Littlewood maximal operator and for $0<t<\infty$ let $\mathcal{M}_tu:=\big(\mathcal{M}(|u|^t)\big)^{1/t}$. For $r>0$ let $\mathcal{E}(r)$ denote the space of all distributions whose Fourier transforms are supported in $\{\xi:|\xi|\leq 2r\}$.
A crucial tool in theory of function spaces is a maximal operator introduced by Peetre \cite{Pe}.
For $r>0$ and $\sigma>0$ define 
\begin{equation*}
\mathcal{M}_{\sigma,r}u(x):=\sup_{y\in\mathbb{R}^d}{\dfrac{|u(x+y)|}{(1+r|y|)^{\sigma}}}.
\end{equation*} 
As shown in \cite{Pe}, one has the majorization \begin{equation*}
\mathcal{M}_{\sigma,r}u(x)\lesssim \mathcal{M}_tu(x)
\end{equation*} for all $\sigma\geq d/{t}$ if $u\in \mathcal{E}(r)$.
These estimates imply the following maximal inequality via the Fefferman-Stein inequality.
Suppose $0<p<\infty$ and $0<q\leq \infty$. Then for any sequence of positive numbers $\{r_k\}$ and $u_k\in \mathcal{E}(r_k)$ one has
\begin{equation}\label{max}
\Big\Vert  \Big(\sum_{k}{(\mathcal{M}_{\sigma,r_k}u_k)^q}\Big)^{1/{q}} \Big\Vert_{L^p} \lesssim  \Big\Vert \Big( \sum_{k}{|u_k|^q}  \Big)^{1/{q}}  \Big\Vert_{L^p} ~\text{for}~ \sigma>\max{\big\{d/p,d/q\big\}}.
\end{equation} 
The following is an immediate consequence of (\ref{max}).
Let $\zeta_0, \zeta \in S$ satisfy $Supp(\widehat{\zeta_0})\subset \{|\xi|\lesssim 1\}$ and $Supp(\widehat{\zeta})\subset \{1/c\leq |\xi|\leq c\}$ for some $c>0$, and set $\zeta_k(x):=2^{kd}\zeta(2^kx)$ for $k\geq 1$. Then\begin{equation}\label{tool}
\Big\Vert  \Big(   \sum_{k=0}^{\infty}{2^{skq }|\zeta_k\ast u|^q}  \Big)^{1/q}  \Big\Vert_{L^p} \lesssim \big\Vert    u\big\Vert_{F_p^{s,q}}.
\end{equation}

\subsection{$\varphi$-transform of $F$-spaces}\label{decomposition}\cite{Fr_Ja, Fr_Ja1, Fr_Ja2}

Let $\mathcal{D}$ be the set of all dyadic cubes in $\mathbb{R}^d$ and $\mathcal{D}_k$  the subset of $\mathcal{D}$ consisting of the cubes with sidelength $2^{-k}$. For $Q\in \mathcal{D}$ we denote the side length of $Q$ by $l(Q)$, lower left corner of $Q$ by $x_Q$, center of $Q$ by $c_Q$, and the characteristic function of $Q$ by $\chi_Q$.
For a sequence of complex numbers $b=\{b_Q\}_{{Q\in\mathcal{D}, l(Q)\leq 1}}$ we define 
\begin{equation*}
g^{s,q}(b)(x):=\Big(\sum_{{Q\in\mathcal{D}, l(Q)\leq 1}}{\big(|Q|^{-s/{d}-1/2}|b_Q|\chi_Q(x)\big)^q}\Big)^{1/q}
\end{equation*} and 
\begin{equation*}
\Vert b \Vert_{f_p^{s,q}}:=\big\Vert  g^{s,q}(b)  \big\Vert_{L^p}.
\end{equation*}
Furthermore for $c>0$ let  $\vartheta_0,\vartheta, \widetilde{\vartheta}_0, \widetilde{\vartheta} \in\mathcal{S}$ satisfy 
\begin{equation*}
Supp(\widehat{\vartheta}_0), Supp(\widehat{\widetilde{\vartheta}_0})\subset \{\xi : |\xi|\leq 2\}, 
\end{equation*}
\begin{equation*}
Supp(\widehat{\vartheta}), Supp(\widehat{\widetilde{\vartheta}})\subset \{\xi : 1/{2}\leq |\xi|\leq 2\}
\end{equation*}
\begin{equation*}
|\widehat{\vartheta_0}(\xi)|, |\widehat{\widetilde{\vartheta}_0}(\xi)| \geq c>0 ~\text{for}~ |\xi|\leq 5/{3}
\end{equation*}
\begin{equation*}
|\widehat{\vartheta}(\xi)|, |\widehat{\widetilde{\vartheta}}(\xi)| \geq c>0 ~\text{for}~ 3/4\leq |\xi|\leq 5/3
\end{equation*}
 and 
 \begin{equation*}
 \sum_{k=0}^{\infty}{\overline{\widetilde{\vartheta}_k(\xi)}\vartheta_k(\xi)}=1
 \end{equation*} where $\vartheta_k(x):=2^{kd}\vartheta(2^kx)$  and $\widetilde{\vartheta}_k(x):=2^{kd}\widetilde{\vartheta}(2^kx)$ for $k\geq 1$.
Then the norms in  $F_p^{s,q}$ can be characterized by the discrete $f_p^{s,q}$ norms.
Suppose $0<p<\infty$, $0<q\leq\infty$, $s\in\mathbb{R}$. 
 Every $f\in F_p^{s,q}$ can be decomposed as 
\begin{equation}\label{decomposition1}
f(x)=\sum_{{Q\in\mathcal{D}, l(Q)\leq 1}}{v_Q\vartheta^Q(x)} 
\end{equation} where $\vartheta^Q(x):=|Q|^{1/2}\vartheta_k(x-x_Q)$ for $l(Q)=2^{-k}$ and $v_Q:=<f,\widetilde{\vartheta}^Q>$.
Moreover in the case one has \begin{equation*}
\big\Vert v   \big\Vert_{f_p^{s,q}} \lesssim \big\Vert  f  \big\Vert_{F_p^{s,q}}.
\end{equation*}
The converse estimate also holds.  For any sequence $v=\{v_Q\}_{Q\in\mathcal{D}}$ of complex numbers satisfying $\big\Vert  v \big\Vert_{f_p^{s,q}}<\infty$,  
\begin{equation*}
f(x):=\sum_{{Q\in\mathcal{D}, l(Q)\leq 1}}{v_Q\vartheta^Q(x)}
\end{equation*} belongs to $F_p^{s,q}$ and 
\begin{equation}\label{decomposition2}
\big\Vert  f  \big\Vert_{F_p^{s,q}} \lesssim \big\Vert  v \big\Vert_{f_p^{s,q}}.
\end{equation}

\section{\textbf{Proof of Theorem \ref{main}}}\label{basic}

The first statement of Theorem \ref{main} follows simply from (\ref{finalcoral}), H\"older's inequality, and the embedding theorem $l^{p_1}\subset l^{p_2}$ for $p_1\leq p_2$. The second one is an  immediate consequence of (\ref{finalcoral}). 
Thus we shall prove just the third and the last statement. Assume $p\not=2$ and 
\begin{equation*}
m-s_1+s_2=-d(1-\rho)\big| 1/2-1/p  \big|.
\end{equation*}

It suffices to show that if $a(x,\xi)$ has compact support in $x$ variable, then
 \begin{equation*}
 \big\Vert T_af\big\Vert_{F_{p}^{s_2,t}}\leq C\Vert f\Vert_{F_{p}^{s_1,q}}
 \end{equation*} where a constant $C$ is independent of the compact support.
 To be specific we pick a smooth function $\gamma$ which is identically $1$ near a neighborhood of the origin and compactly supported.
Then \begin{equation}\label{supvar}
a^{\tau}(x,\xi):= a(x,\xi)\gamma( x/2^{\tau}), ~~\tau>0
\end{equation} belongs to $\mathcal{S}_{\rho,\rho}^m$ uniformly in $\tau$. 
For fixed $x\in\mathbb{R}^d$, 
\begin{equation*}
\big|  \phi_k\ast T_af(x)-\phi_k\ast T_{a^{\tau}}f(x)  \big|= \big| T_af\big( \phi_k(x-\cdot)\big(1-\gamma(\cdot/2^{\tau})\big)       \big) \big|.
\end{equation*}
Since $T_af \in S'(\mathbb{R}^d)$ this is bounded by
\begin{equation}\label{tag}
C\sum_{|\alpha|\leq N}{\big\Vert (1+|\cdot|)^{L}\partial^{\alpha}\big[ \phi_k(x-\cdot)\big(1-\gamma(\cdot/2^{\tau})\big)   \big]   \big\Vert_{L^{\infty}}}
\end{equation}   for some constants $C,L,N>0$, independent of $k$, $\tau$, and $x$.
For $\tau$ sufficiently large so that $|x| <100\cdot 2^{\tau}$, (\ref{tag}) is less than $C_{N,M} 2^{-k(M-N-d)}2^{-\tau(M-L)}$ for $M>N$ because $1-\gamma(y/2^{\tau})$ and any derivatives of it vanish on $\{y:|y|\lesssim 2^{\tau}\}$. This gives
\begin{equation*}
\big\Vert  \big\{2^{sk}\big(\phi_k\ast T_af(x)-\phi_k\ast T_{a^{\tau}}f(x)\big)\big\}  \big\Vert_{l^q(\mathbb{Z})}\lesssim_{N,M} 2^{-\tau(M-L)}
\end{equation*} for sufficiently large $M>0$, and thus one obtains
\begin{equation*}
\lim_{\tau \to\infty}{\big\Vert  \big\{ 2^{sk}\phi_k\ast T_{a^{\tau}}f(x)\big\}  \big\Vert_{l^q(\mathbb{Z})}}=\big\Vert \big\{2^{sk} \phi_k\ast T_af(x)   \big\} \big\Vert_{l^q(\mathbb{Z})}
\end{equation*} for fixed $x\in\mathbb{R}^d$.
Once $T_{a^{\tau}}$ is a bounded operator uniformly in $\tau$ then  by Fatou's lemma
\begin{equation*}
\big\Vert  T_af \big\Vert_{F_p^{s,q}}\leq \liminf_{\tau\to\infty}{\big\Vert  T_{a^{\tau}}f  \big\Vert_{F_p^{s,q}}}\lesssim \big\Vert f\big\Vert_{F_p^{s,q}}.
\end{equation*} 

Therefore one may assume $a(x,\xi)$ has a compact support in $x$ variable. The uniformity would be guaranteed because all of the estimates later will be made independently of the compact support. Indeed, one needs this compact support assumption just for doing integration by parts in order to obtain (\ref{sizeest}).

One may also assume $s_1=s_2=0$ due to the composition property of pseudo-differential operators in Section \ref{property}.

\subsection{  Paradifferential technique for Pseudo-differential operators }

The idea of our proof is based on the paradifferential technique, introduced by Bony \cite{Bo}. One splits $a\in\mathcal{S}_{\rho,\rho}^{m}$ into three symbols as follows.
Let 
\begin{equation}\label{defaa}
a_{j,k}(x,\xi)=
\begin{cases}
\phi_j\ast a(\cdot,\xi)(x)\widehat{\phi_k}(\xi) \quad &
\quad \quad j,k\geq 0
\\
0 \quad &
\quad  otherwise.
\end{cases}
\end{equation}
Then we decompose the symbol $a(x,\xi)$ as 
\begin{align*}
a(x,\xi)&=\sum_{k}{\sum_{j}{a_{j,k}(x,\xi)}}\\
    &=\sum_{j=3}^{\infty}{\sum_{k=0}^{j-3}{a_{j,k}(x,\xi)}}+\sum_{k=0}^{\infty}{\sum_{j=k-2}^{k+2}{a_{j,k}(x,\xi)}}+\sum_{k=3}^{\infty}{\sum_{j=0}^{k-3}{a_{j,k}(x,\xi)}}\\
    &=:a^{(1)}(x,\xi)+a^{(2)}(x,\xi)+a^{(3)}(x,\xi)
\end{align*}
and proceed by estimating each term separately.

Let $T^{(j)}$ be the pseudo-differential operators corresponding to each $a^{(j)}$.
Then our claim is that for any $s,m\in \mathbb{R}$ and $0<t\leq\infty$, $T^{(1)}$ and $T^{(2)}$ satisfy the estimates
\begin{equation}\label{12}
\big\Vert   T^{(1)}f  \big\Vert_{F_p^{0,t}}\lesssim \Vert  f \Vert_{F_p^{s,t}}
\end{equation}  and 
\begin{equation}\label{34}
\big\Vert   T^{(2)}f  \big\Vert_{F_p^{0,t}}\lesssim \Vert  f \Vert_{F_p^{s,t}},
\end{equation}  which, of course, imply both operators map $F_p^{0,q}$ into $F_p^{0,t}$ for $0<q\leq\infty$.

\subsection{Proof of (\ref{12})}
It follows in the same way as in \cite{Joh}, which is based on the following two lemmas.
\begin{lemma}\cite[Lemma 2.1]{Joh}\label{Yaineq}, \cite[Theorem 3.6]{Ya}
Let $A>0$, $s \in \mathbb{R}$, $0<p<\infty$,  $0<q\leq \infty$ and $\{u_j\}_{j=0}^{\infty}$ be a sequence in $S'(\mathbb{R}^d)$ satisfying 
$Supp(\widehat{u_0})\subset \{\xi: |\xi|\leq A\}$ and $Supp(\widehat{u_j})\subset \{\xi:2^{j}A^{-1} \leq |\xi|\leq 2^{j}A\}$ for $j\geq 1$. 
\begin{enumerate}
\item If   $\big\Vert \{2^{sj}u_j\}_{j=0}^{\infty}\big\Vert_{l^q(L^p)}<\infty$, then $ \sum_{j=0}^{\infty}{u_j}$ converges in $S'(\mathbb{R}^d)$ to some $u\in B_{p}^{s,q}$ and
 \begin{equation*}
 \Vert   u\Vert_{{B}_p^{s,q}}\lesssim_A  \big\Vert \{2^{sj}u_j\}_{j=0}^{\infty}\big\Vert_{l^q(L^p)}.   
 \end{equation*}
\item If $ \big\Vert  \{2^{sj}u_j\}_{j=0}^{\infty}   \big\Vert_{L^p(l^q)}<\infty$, then $\sum_{j=0}^{\infty}{u_j}$ converges in $S'(\mathbb{R}^d)$ to some $u\in F_{p}^{s,q}$ and
 \begin{equation*}
 \Vert   u\Vert_{{F}_p^{s,q}}\lesssim_A \big\Vert  \{2^{sj}u_j\}_{j=0}^{\infty}   \big\Vert_{L^p(l^q)}.
 \end{equation*}
 \end{enumerate}
\end{lemma}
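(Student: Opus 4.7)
The plan is to exploit the near-orthogonality of Fourier supports. Fix a Littlewood-Paley system $\{\phi_k\}_{k\ge 0}$ as in Section \ref{intro}. Since $\widehat{u_j}$ is supported in $\{2^j A^{-1}\le|\xi|\le 2^j A\}$ for $j\ge 1$ and $\widehat{\phi_k}$ in $\{2^{k-1}\le|\xi|\le 2^{k+1}\}$, there is an integer $N_0=N_0(A)$ such that $\phi_k * u_j \equiv 0$ whenever $|j-k|>N_0$. Consequently, once the series $u := \sum_j u_j$ is shown to converge in $S'(\mathbb{R}^d)$, one has the finite-sum identity
$$\phi_k * u(x) = \sum_{|j-k|\le N_0}\phi_k * u_j(x),$$
and since $\phi_k * u_j \in \mathcal{E}(c\,2^j)$ on this range, the Peetre pointwise bound $|\phi_k * u_j(x)| \le C\,\mathcal{M}_{\sigma,2^j}u_j(x)$ is available for every $\sigma>0$.

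With this in hand, the norm estimates fall out. For part (2), substitute the Peetre bound, use $2^{sk}\approx 2^{sj}$ on the range $|j-k|\le N_0$, take the $\ell^q_k$-norm (which absorbs the finite shift in $k$) and then the $L^p$-norm, and invoke the vector-valued maximal inequality (\ref{max}) with $\sigma>d/\min(p,q)$ to conclude $\Vert u\Vert_{F_p^{s,q}}\lesssim \big\Vert\{2^{sj}u_j\}\big\Vert_{L^p(\ell^q)}$. Part (1) is simpler and parallel: bound $\Vert\phi_k*u_j\Vert_{L^p}$ by $C\Vert u_j\Vert_{L^p}$ via Young's inequality when $p\ge 1$, or via the Peetre bound $|\phi_k*u_j|\le C\,\mathcal{M}_tu_j$ with $t<p$ combined with the scalar Hardy-Littlewood maximal theorem when $p<1$; then take the $\ell^q$-norm and absorb the finite shift in $k$.

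The main obstacle is establishing convergence of $\sum_j u_j$ in $S'(\mathbb{R}^d)$ uniformly across the quasi-Banach regime $0<p\le\infty$. I would fix Schwartz functions $\Psi_j$, which we may take even, with $\widehat{\Psi_j}\equiv 1$ on $\mathrm{supp}\,\widehat{u_j}$ (so that $\widehat{\Psi_j}(\xi)=\eta(2^{-j}\xi)$ for a fixed bump $\eta$ when $j\ge 1$); then for any $\varphi\in S(\mathbb{R}^d)$,
$$\langle u_j,\varphi\rangle = \langle u_j,\Psi_j*\varphi\rangle.$$
Since $\Psi_j*\varphi$ has Fourier transform supported in $|\xi|\sim 2^j$ and $\widehat\varphi$ decays faster than any polynomial, one obtains $\Vert\Psi_j*\varphi\Vert_{L^\infty}\le C_{N,\varphi}\,2^{-jN}$ for every $N$. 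Combining this with H\"older's inequality when $p\ge 1$, or with the Nikolskii embedding $\Vert u_j\Vert_{L^1}\lesssim 2^{jd(1/p-1)}\Vert u_j\Vert_{L^p}$ when $p<1$, yields $|\langle u_j,\varphi\rangle|\le C_N\,2^{-jN}\Vert u_j\Vert_{L^p}\Vert\varphi\Vert_{\mathcal{S}}$. Choosing $N$ large relative to $s$ and $d/p$ produces absolute summability of $\sum_j\langle u_j,\varphi\rangle$, so the partial sums are Cauchy in $S'$; the limit distribution $u$ then inherits the norm bound from the previous step.
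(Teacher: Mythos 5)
The paper cites this lemma from \cite{Joh} and \cite{Ya} without reproducing a proof, so there is no in-paper argument to compare against; your proof is correct and follows the standard argument underlying those references: finite off-diagonal interaction $\phi_k*u_j=0$ for $|j-k|>N_0(A)$, the Peetre pointwise bound, the vector-valued maximal inequality (\ref{max}), and a rapid-decay estimate for $\langle u_j,\varphi\rangle$ to get $S'$-convergence. One minor imprecision worth flagging: the pointwise bound $|\phi_k*u_j(x)|\le C\,\mathcal{M}_{\sigma,2^j}u_j(x)$ comes directly from the Schwartz decay of $\phi_k$ together with the defining inequality for $\mathcal{M}_{\sigma,2^j}$ (and the comparability $2^j\sim 2^k$ on the range $|j-k|\le N_0$), and has nothing to do with $\phi_k*u_j$ being band-limited; what actually requires the Fourier support hypothesis on $u_j$ is the subsequent application of (\ref{max}), which needs $u_j\in\mathcal{E}(c2^j)$. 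With that attribution corrected, the argument is complete and sound.
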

\begin{lemma}\cite{Joh, Ma}\label{Maineq} 
Let $A>0$ and let $v\in\mathcal{S}'(\mathbb{R}^d)$ and $b(x,\xi)$ be a function on $\mathbb{R}^d\times\mathbb{R}^d$ such that $Supp(\widehat{v})\subset \{\xi : |\xi|\leq 2^kA\}$ and $b(x,\xi)=0$ for $|\xi|>2^kA$.
Then for all $0<r\leq 1$, 
\begin{equation*}
|T_bv(x)|\lesssim_A  \big\Vert  b(x,2^k\cdot)\big\Vert_{W^{c^{(r,d)},1}}\mathcal{M}_rv(x)
\end{equation*} where  $c^{(r,d)}$ is the smallest integer greater than $d/r$ and $W^{c^{(r,d)} ,1}$ is the Sobolev space with $ \Vert  u  \Vert_{W^{c^{(r,d)},1}}=\sum_{|\alpha|\leq c^{(r,d)} }{\Vert \partial^{\alpha}u \Vert_{L^1}}$.
 \end{lemma}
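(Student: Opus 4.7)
My plan is to write $T_bv$ as an integral against a kernel and then separate pointwise from averaged control of $v$ through the elementary split $|v|=|v|^{r}\,|v|^{1-r}$. Since $\widehat{v}$ and $b(x,\cdot)$ are both supported in $|\xi|\le 2^{k}A$, Fubini gives the representation
\[
T_b v(x) = \int v(x-w)\, K_x(w)\, dw, \qquad K_x(z) = \int b(x,\xi)\, e^{2\pi i \langle z,\xi\rangle}\, d\xi.
\]
Setting $\tilde b(x,\eta) = b(x,2^{k}\eta)$, so that $\tilde b(x,\cdot)$ is supported in $|\eta|\le A$, integration by parts $|\alpha|\le c$ times in $\eta$ (with $c=c^{(r,d)}$) yields the pointwise bound $|[\tilde b(x,\cdot)]^{\vee}(y)| \lesssim (1+|y|)^{-c}\,\|\tilde b(x,\cdot)\|_{W^{c,1}}$; transforming back by $y=2^{k}w$ gives
\[
|K_x(w)| \lesssim 2^{kd}\,(1+2^{k}|w|)^{-c}\,\|\tilde b(x,\cdot)\|_{W^{c,1}}.
\]

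Next, I apply the Peetre majorization from Section~\ref{property} to the factor $|v(x-w)|^{1-r}$ rather than to $|v(x-w)|$ itself. Since $\widehat{v}$ is supported in $|\xi|\le 2^{k}A$, the cited inequality $\mathcal{M}_{\sigma,R}v\lesssim \mathcal{M}_{r}v$ at $\sigma=d/r$, $R=2^k$ yields
\[
|v(x-w)|^{1-r} \lesssim \bigl(1+2^{k}|w|\bigr)^{(1-r)d/r}\,\bigl(\mathcal{M}_{r}v(x)\bigr)^{1-r}.
\]
Inserting this and the kernel bound into $\int |v(x-w)|\,|K_x(w)|\,dw$ and rescaling $w=2^{-k}u$ produces
\[
|T_b v(x)| \lesssim \bigl(\mathcal{M}_{r}v(x)\bigr)^{1-r}\,\|\tilde b(x,\cdot)\|_{W^{c,1}} \int |v(x-2^{-k}u)|^{r}\,(1+|u|)^{(1-r)d/r - c}\, du.
\]

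Finally, I decompose the remaining integral dyadically on shells $|u|\sim 2^{j}$ for $j\ge 0$. Reverting the scaling on each shell and applying the standard Hardy-Littlewood bound on the ball $B(x,2^{j-k})$ gives $\int_{|u|\sim 2^j}|v(x-2^{-k}u)|^{r}\,du \lesssim 2^{jd}\,\mathcal{M}(|v|^{r})(x)$, so the $j$th shell contributes at most a constant times $2^{jd}\,(1+2^{j})^{(1-r)d/r-c}\,\mathcal{M}(|v|^{r})(x)$. Summing produces a geometric series with ratio $2^{d+(1-r)d/r-c}=2^{d/r-c}$, which converges precisely because $c>d/r$. Hence the remaining integral is $\lesssim \mathcal{M}(|v|^{r})(x)=\bigl(\mathcal{M}_{r}v(x)\bigr)^{r}$, and combining with the earlier $(\mathcal{M}_{r}v(x))^{1-r}$ factor closes the estimate.

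The main obstacle is the delicate balance of exponents. A naive approach that applies Peetre's inequality directly to $|v|$ and uses the kernel bound forces an integral $\int(1+|y|)^{d/r-c}\,dy$ that requires $c>d/r+d$ for convergence, falling short by $d$ derivatives. The splitting $|v|=|v|^{r}|v|^{1-r}$ routes one of the two factors through an $L^{r}$ average rather than a pointwise Peetre bound, and the Hardy-Littlewood maximal function supplies precisely the missing factor $2^{jd}$ on each dyadic shell. This reconciles the required integer $c>d/r$ with convergence of the sum.
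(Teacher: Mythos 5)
Your proof is correct. Note that the paper does not supply its own proof of Lemma~\ref{Maineq}; it is quoted from Johnsen \cite{Joh} and Marschall \cite{Ma}. Your argument reproduces the standard one behind this estimate: the representation $T_bv(x)=\int v(x-w)K_x(w)\,dw$, the rescaled kernel bound $|K_x(w)|\lesssim 2^{kd}(1+2^k|w|)^{-c}\Vert b(x,2^k\cdot)\Vert_{W^{c,1}}$, and — the essential point — the splitting $|v|=|v|^{r}|v|^{1-r}$ in which the Peetre inequality at $\sigma=d/r$ is applied only to the $(1-r)$-th power while the $r$-th power is routed through a dyadic-shell average and the Hardy--Littlewood maximal function. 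You correctly observe that applying Peetre to the full factor $|v(x-w)|$ would force $c>d/r+d$ derivatives and fail to match the claimed $c^{(r,d)}$. Two small bookkeeping points you implicitly pass over: (i) the Peetre majorization applies with parameter $2^{k-1}A$, which you replace by $2^{k}$, costing only an $A$-dependent constant consistent with the $\lesssim_A$ in the statement; and (ii) the $j=0$ contribution (the unit ball $|u|\le 1$) should be included alongside the shells $|u|\sim 2^j$, $j\ge 1$, but it obeys the same bound. Neither affects the conclusion.
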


Observe that  
\begin{equation*}
T_{a_{j,k}}f(x)=T_{a_{j,k}}(f\ast \widetilde{\phi}_k)(x)
\end{equation*} where $\widetilde{\phi}_k=\phi_{k-1}+\phi_k+\phi_{k+1}$ and
the Fourier transform of $ u_j := \sum_{k=0}^{j-3}{T_{a_{j,k}}f      }$ is supported in the annulus $\{\eta : 2^{j-2}\leq|\eta|\leq 2^{j+2}\}$.
By  Lemma \ref{Yaineq} one has
\begin{equation}\label{est1}
\Vert T^{(1)}f\Vert_{{F}_{p}^{0,t}}  \lesssim \Big\Vert  \Big( \sum_{j=3}^{\infty}{\Big|\sum_{k=0}^{j-3}{ T_{a_{j,k}}(f\ast \widetilde{\phi}_k)             }\Big|^{t}}          \Big)^{1/{t}}   \Big\Vert_{L^p}
\end{equation} 
and then apply Lemma \ref{Maineq} to get
\begin{equation*}
\big| T_{a_{j,k}}(f\ast \widetilde{\phi}_k)(x)\big|\lesssim \big\Vert   a_{j,k}(x,2^k\cdot)\big\Vert_{W^{c^{(r,d)},1}}\mathcal{M}_r(f\ast \widetilde{\phi}_k)(x)
\end{equation*} for $0<r<\min{\{1,p,t\}}$.
For each multi-index $\alpha_1$ and $|\xi|\approx 1$, using the cancellation condition of $\phi_j$,
we see that
\begin{align*}
&\big| \phi_j\ast \partial_{\xi}^{\alpha_1}a(\cdot,2^k\xi)(x)         \big| \\
&\lesssim \sum_{|\beta|=N}{\int_{\mathbb{R}^d}{|\phi_j(y)||y|^N     \int_0^1 \dots  \int_0^1{t_1^{N-1}\dots t_{N-1}\big|\partial_x^{\beta}\partial_{\xi}^{\alpha_1}a(x-t_1\dots t_Ny,2^k\xi)\big|    }dt_N\dots d t_1    }dy}\\
      &\lesssim_N 2^{k(m-\rho|\alpha_1|+\rho N)-jN}
\end{align*}  for any positive integer $N$. Here we used Taylor's theorem
\begin{equation*}
g(x-y)=\sum_{|\beta|=1}\int_0^1{\partial^{\beta}g(x-ty)}dt(-y)^{\beta}
\end{equation*}
$N$ times for the first inequality.
Then this proves
\begin{align*}
 \big\Vert   a_{j,k}(x,2^k\cdot)\big\Vert_{W^{c^{(r,d)},1}}&\lesssim \sum_{|\alpha|\leq c^{(r,d)}}{\int_{|\xi|\approx 1}{\Big| \partial_{\xi}^{\alpha}\big(\phi_j\ast a(\cdot,2^k\xi)(x)\widehat{\phi}(\xi)\big)   \Big|}d\xi}\\
   &\lesssim 2^{k(m+c^{(r,d)}(1-\rho)+\rho N)-jN}.
\end{align*} Therefore 
\begin{equation}\label{firstkey}
\big| T_{a_{j,k}}(f\ast\widetilde{\phi}_k)(x)  \big|\lesssim 2^{k(m+c^{(r,d)}(1-\rho)+\rho N)-jN} \mathcal{M}_r(f\ast \widetilde{\phi}_k)(x)
\end{equation}
and finally one has
\begin{align*}
\Vert T^{(1)}f\Vert_{{F}_p^{0,t}}   &\lesssim\Big\Vert  \Big( \sum_{j=3}^{\infty}{2^{-N jt}\Big(\sum_{k=0}^{j-3}{ 2^{k(m+c^{(r,d)}(1-\rho)+\rho N)}\mathcal{M}_r\big(f\ast \widetilde{\phi}_k\big)             }\Big)^{t}}          \Big)^{1/{t}}   \Big\Vert_{L^p}\\
    &\lesssim \Big\Vert   \Big( \sum_{k=0}^{\infty}{2^{kt(m+c^{(r,d)}(1-\rho)+\rho N-\epsilon)}\big(\mathcal{M}_r(f\ast\widetilde{\phi}_k)\big)^t \sum_{j=k+3}^{\infty}{2^{-jt(N-\epsilon)}}            }   \Big)^{1/t}  \Big\Vert_{L^p}\\
    &\lesssim \Big\Vert   \Big(  \sum_{k=0}^{\infty}{2^{kt(m+c^{(r,d)}(1-\rho)-N(1-\rho))}\big|f\ast \widetilde{\phi}_k\big|^{t}}      \Big)^{{1}/{t}} \Big\Vert_{L^p}\\
    &\lesssim  \Vert f\Vert_{{F}_{p}^{m+c^{(r,d)}(1-\rho)-N(1-\rho),t}} 
\end{align*} for $N$ sufficiently large and $0<\epsilon <N$ where the third inequality follows from the $L^p(l^t)$ boundedness of maximal operator $\mathcal{M}_r$ with $0<r<p,t$ and the last one is from (\ref{tool}).
For all $s\in\mathbb{R}$ we choose $N$ sufficiently large so that 
\begin{equation*}
m+c^{(r,d)}(1-\rho)-N(1-\rho)<s.
\end{equation*} This ends the proof of (\ref{12}).

\subsection{Proof of (\ref{34})}

Now we consider the operator $T^{(2)}$.
By setting 
\begin{equation*}
 a_k(z,\eta):=\sum_{j=k-2}^{k+2}{a_{j,k}(z,\eta)}=\Phi_k\ast a(\cdot,\eta)(z)\widehat{\phi_k}(\eta)
 \end{equation*} where $ \Phi_k:=\sum_{j=k-2}^{k+2}{\phi_j}$, we express $T^{(2)}$ as 
\begin{equation}\label{secondoperator}
T^{(2)}= \sum_{k=0}^{9}{T_{a_k}}+  \sum_{k=10}^{\infty}{T_{a_k}}.
\end{equation}
The finite sum of operators clearly satisfies (\ref{34}). 
For each $k\geq 10$ the kernel of $T_{a_k}$ is 
 \begin{align}\label{kernelpart}
 K_k(x,y)&=\int_{\mathbb{R}^d}{a_k(x,\xi)e^{2\pi i\langle x-y,\xi\rangle }}d\xi\nonumber\\
             &=\int_{\mathbb{R}^d\times\mathbb{R}^d\times\mathbb{R}^d}{{{  a(z,\xi)\widehat{\Phi}_k(\eta)\widehat{\phi}_k(\xi)e^{-2\pi i\langle z,\eta\rangle }e^{2\pi i\langle x,\eta\rangle }e^{2\pi i\langle x-y,\xi\rangle }     }dz}d\eta}d\xi .
 \end{align} Observe that in the integral the variables $\xi$ and $\eta$ live in $|\xi|\approx 2^k$ and $|\eta|\approx 2^k$. Then it has the size estimate 
\begin{equation}\label{sizeest}
\big|K_k(x,y)\big|\lesssim_{J,N} 2^{-J k}\dfrac{1}{(1+|x-y|)^N}
\end{equation} for any $J>0$ and $N>0$.

The idea to get (\ref{sizeest}) is to apply a technique of oscillatory integrals by integrating by parts with respect to each variable. First, we perform this with respect to the $z$-variable (we could do this due to our compact support hypothesis) and then carry out a similar process on the $\eta$-variable.
Now (\ref{kernelpart}) is dominated by
\begin{equation*}
C_M2^{-k(2M-d)}\int_{\mathbb{R}^d}{    \Big|\int_{|\xi|\approx 2^k}{ (I-\Delta_z)^Ma(z,\xi)e^{2\pi i\langle x-y,\xi\rangle }\widehat{\phi}_k(\xi) }d\xi\Big|  \dfrac{1}{(1+|x-z|)^{2d}}}dz.
\end{equation*} for any $M>0$.
If $|x-y|\leq 1$, then \begin{equation*}
\Big|\int_{|\xi|\approx 2^k}{ (I-\Delta_z)^Ma(z,\xi)e^{2\pi i\langle x-y,\xi\rangle } \widehat{\phi}_k(\xi)}d\xi\Big|
\lesssim \int_{|\xi|\approx 2^k}{\big(1+|\xi|\big)^{m+2\rho M}}d\xi \lesssim 2^{k(m+2\rho M +d)}.
\end{equation*}
If $|x-y|>1$, then we do integration by parts in $\xi$-variable  to get
\begin{align*}
\Big|\int_{|\xi|\approx 2^k}{ (I-\Delta_z)^Ma(z,\xi)e^{2\pi i\langle x-y,\xi\rangle } \widehat{\phi}_k(\xi)}d\xi\Big|&\lesssim_{N,M} \dfrac{1}{|x-y|^N}\int_{|\xi|\approx 2^k}{\big(1+|\xi|\big)^{m-\rho N+2\rho M}}d\xi\\
&\lesssim_{N,M} 2^{k(m-\rho N+2\rho M+d)}\dfrac{1}{|x-y|^N}.
\end{align*}
These yield (\ref{sizeest})  by choosing $M$ and $N$ sufficiently large.

Since $\widehat{T_{a_k}f}$ is supported in $\{|\eta|\leq 2^{k+4}\}$, we have 
\begin{align}
\Big\Vert    \sum_{k=10}^{\infty}{T_{a_k}}f  \Big\Vert_{F_{p}^{0,t}}&\leq \Big\Vert \Big( \sum_{j=0}^{\infty}{ \big( \sum_{k=10}^{\infty }{|\phi_j\ast T_{a_k}(f\ast \widetilde{\phi}_k)|       }   \big)^{t}}       \Big)^{{1}/{t}}    \Big\Vert_{L^{p}}\nonumber\\
  &\lesssim \Big\Vert \Big( \sum_{j=0}^{13}{ \big( \sum_{k=10}^{\infty }{|\phi_j\ast T_{a_k}(f\ast \widetilde{\phi}_k)|       }   \big)^{t}}       \Big)^{{1}/{t}}    \Big\Vert_{L^{p}}\label{secondmain1}\\
  &\relphantom{=} +\Big\Vert \Big( \sum_{j=14}^{\infty}{ \big( \sum_{k=j-4}^{\infty }{|\phi_j\ast T_{a_k}(f\ast \widetilde{\phi}_k)|       }   \big)^{t}}       \Big)^{{1}/{t}}    \Big\Vert_{L^{p}}\label{secondmain2}.
\end{align}

Let $\sigma>\max{\big\{{d}/{p},{d}/{t}\big\}}$ and choose $N>\sigma$.
Then \begin{align*}
\big|\phi_j\ast T_{a_k}(f\ast \widetilde{\phi}_k)(x)\big|&\leq \int_{\mathbb{R}^d}{\int_{\mathbb{R}^d}{ |\phi_j(x-z)||K_k(z,y)||f\ast \widetilde{\phi}_k(y)|  }dy}dz   \\
   &\lesssim \mathcal{M}_{\sigma,2^k}{(f\ast \widetilde{\phi}_k)}(x)\int_{\mathbb{R}^d}{\int_{\mathbb{R}^d}{ \big(1+2^k|x-y|\big)^{\sigma}\big| \phi_j(x-z)\big| \big| K_k(z,y)\big|   }dy}dz\\
   &\lesssim 2^{-k(J-\sigma)} \mathcal{M}_{\sigma,2^k}{(f\ast \widetilde{\phi}_k)}(x)
\end{align*} by the size estimate (\ref{sizeest}).
Thus for $\epsilon>0$
\begin{align*}
(\ref{secondmain1})&\lesssim \Big\Vert  \sum_{k=10}^{\infty}{2^{-k(J-\sigma)} \mathcal{M}_{\sigma,2^k}(f\ast \widetilde{\phi}_k)     }  \Big\Vert_{L^p}\\
&\lesssim \Big\Vert  \Big(  \sum_{k=10}^{\infty}{2^{-kt(J-\sigma-\epsilon)} \big(\mathcal{M}_{\sigma,2^k}(f\ast \widetilde{\phi}_k)\big)^t  }    \Big)^{1/t}  \Big\Vert_{L^p}\\
       &\lesssim \Big\Vert  \Big(  \sum_{k=10}^{\infty}{2^{-kt(J-\sigma-\epsilon)} \big|f\ast \widetilde{\phi}_k\big|^t  }    \Big)^{1/t}  \Big\Vert_{L^p}\lesssim\Vert f\Vert_{F_{p}^{-(J-\sigma-\epsilon),t}}
\end{align*} by (\ref{max}) and (\ref{tool}).
Similarly we also get 
\begin{equation*}
(\ref{secondmain2}) \lesssim \Vert f\Vert_{F_{p}^{-(J-\sigma-\epsilon),t}}.
\end{equation*} 
Then (\ref{34}) follows by choosing $J$ satisfying $-(J-\sigma-\epsilon)<s.$

\subsection{Boundedness of $T^{(3)}$}
We write a symbol $a^{(3)}\in\mathcal{S}_{\rho,\rho}^{m}$ as 
\begin{equation*}
a^{(3)}(x,\xi)=\sum_{k=3}^{\infty}{b_k(x,\xi)}
\end{equation*} where $b_k(x,\xi):=\sum_{j=0}^{k-3}{a_{j,k}(x,\xi)}.$

Since  $\widehat{T_{b_k}u}$ is supported in $\{\xi : 2^{k-2}\leq |\xi|\leq 2^{k+2}\}$ it follows that for $0<r<\infty$
\begin{equation*}
\Vert T_{b_k}u \Vert_{L^r} \lesssim \big\Vert   T_{b_k}u \big\Vert_{h^r}.
\end{equation*} In addition, 
\begin{equation*}
b_k(x,\xi)=\Big(\sum_{j=0}^{k-3}{\phi_j}\Big)\ast a(\cdot,\xi)(x)
\end{equation*} is a $\mathcal{S}^{m}_{\rho,\rho}$ symbol with a constant which is independent of $k$, and thus
 for  $u\in S'(\mathbb{R}^d)$,
\begin{equation*}
\big\Vert   T_{b_k}u \big\Vert_{h^r} \leq C_r 2^{k( d(1-\rho)( | {1}/{r}-{1}/{2}  |-| {1}/{p}-{1}/{2} |   ) )}\big\Vert \phi_k\ast u \big\Vert_{h^r}
\end{equation*} 
by (\ref{finalcoral}), which implies
\begin{equation}\label{pqr}
\big\Vert   T_{b_k}u \big\Vert_{L^r} \lesssim C_r 2^{k( d(1-\rho)( | {1}/{r}-{1}/{2}  |-| {1}/{p}-{1}/{2} |   ) )}\big\Vert \phi_k\ast u \big\Vert_{h^r}.
\end{equation} 

By the same reasoning as (\ref{est1}) one has
\begin{equation*}
\big\Vert   T^{(3)}f  \big\Vert_{F_p^{0,t}} \lesssim \Big\Vert  \Big(   \sum_{k=3}^{\infty}{|T_{b_k}f|^t}           \Big)^{{1}/{t}}   \Big\Vert_{L^p}.
\end{equation*}
Therefore in order to conclude the proof of Theorem \ref{main} it is enough to show that
\begin{equation}\label{goal}
\Big\Vert  \Big(   \sum_{k=3}^{\infty}{|T_{b_k}f|^t}           \Big)^{{1}/{t}}   \Big\Vert_{L^p} \lesssim \big\Vert  f \big\Vert_{F_p^{0,q}}.
\end{equation} 
We shall study just the cases $0<p\leq 1$ and $2<p<\infty$.
Then since the adjoint operator $(T_a)^*$ is also in $Op\mathcal{S}_{\rho,\rho}^m$ the case $1<p<2$ can be derived via duality.  Indeed, for $1<p<2$
\begin{align*}
\Vert T_af\Vert_{F_p^{0,p}}&= \big\Vert \big\{\phi_k\ast T_af\big\}\big\Vert_{L^p(l^p)}=\sup_{\Vert \{g_k\} \Vert_{L^{p'}(l^{p'})}\leq 1}{\Big| \int_{\mathbb{R}^d}{\sum_{k=0}^{\infty}{\phi_k\ast T_af(x)g_k(x)}}dx  \Big|}\\
                      &= \sup_{\Vert g_k \Vert_{L^{p'}(l^{p'})}\leq 1}{\Big| \int_{\mathbb{R}^d}{\sum_{j=0}^{\infty}{\phi_j\ast f(x)\widetilde{\phi}_j\ast\Big( T_a^*\Big(\sum_{k=0}^{\infty}{\phi_k\ast g_k}\Big)  \Big)(x)     }}dx    \Big|   }\\
                      &\lesssim \Vert f\Vert_{F_p^{0,\infty}}\sup_{\Vert \{g_k\} \Vert_{L^{p'}(l^{p'})}\leq 1}{\Big\Vert  T_a^*\Big(\sum_{j=0}^{\infty}{\phi_j\ast g_j}\Big)\Big\Vert_{F_{p'}^{0,1}}}\\
                      &\lesssim \Vert f\Vert_{F_p^{0,\infty}}\sup_{ \Vert \{g_k\} \Vert_{L^{p'}(l^{p'})}\leq 1}{\Big\Vert \sum_{k=0}^{\infty}{\phi_k\ast g_k}  \Big\Vert_{F_{p'}^{0,p'}}}\leq\Vert f \Vert_{F_{p}^{0,\infty}}
\end{align*} where the last inequality holds due to (\ref{newdual}). Here everything makes sense and one may use (\ref{newdual}) because the infinite sum of $\phi_k\ast g_k$ belongs to $S'$ due to Lemma \ref{Yaineq} with the estimate 
\begin{equation*}
\Big\Vert \Big(\sum_{k=0}^{\infty}{|\phi_k\ast g_k|^{p'}}  \Big)^{1/p'} \Big\Vert_{L^{p'}}\leq \Vert g_k \Vert_{L^{p'}(l^{p'})}\leq 1.
\end{equation*}

\subsection{   Proof of (\ref{goal}); the case $0<p\leq 1 $ }
One needs to prove (\ref{goal}) with $t=p$ and $q=\infty$.
In \cite{Pa_So} P\"aiv\"arinta and Somersalo use the atomic decomposition of the local hardy space for $0<p\leq 1$. It is therefore natural to use an adaption of the atomic decomposition of Triebel-Lizorkin spaces. Alternatively, one can characterize Triebel-Lizorkin spaces $F_p^{s,q}$ with the associated sequence spaces $f_p^{s,q}$ via the Frazier and Jawerth $\varphi$-transform, and then use atomic decomposition for the sequence spaces. We follow the latter approach and recall definitions.

\begin{definition}
Let $0<p\leq 1$, $0< q\leq \infty$, and $s\in\mathbb{R}$. A sequence of complex numbers $r=\{r_Q\}_{{Q\in\mathcal{D}, l(Q)\leq 1}}$ is called an $\infty$-atom for $f_p^{s,q}$ if there exists a dyadic cube $Q_0$ such that 
\begin{equation*}
r_Q=0 \quad \text{if}\quad Q \not\subset Q_0
\end{equation*}
 and \begin{equation}\label{infdef}
\big\Vert  g^{s,q}(r)  \big\Vert_{L^{\infty}}\leq |Q_0|^{-{1}/{p}}.
\end{equation}

\end{definition}

The following atomic decomposition of $f_p^{s,q}$ substitutes the atomic decomposition of $h^p$.
\begin{lemma}\label{decomhardy}\cite{Fr_Ja2}, \cite[6.6.3]{Gr}
Suppose $0<p\leq 1$, $p\leq q\leq\infty$, and $b=\{b_Q\}_{Q\in\mathcal{D},l(Q)\leq1}\in f_p^{s,q}$. Then there exist $C_{d,p,q}>0$, a sequence of scalars $\{\lambda_j\}$, and a sequence of $\infty$-atoms $r_j=\{r_{j,Q}\}_{{Q\in\mathcal{D}, l(Q)\leq 1}}$ for $f_p^{s,q}$ such that 
\begin{equation*}
b=\{b_Q\}=\sum_{j=1}^{\infty}{\lambda_j\{r_{j,Q}\}}=\sum_{j=1}^{\infty}{\lambda_j r_j}
\end{equation*} and such that 
\begin{equation*}
\Big(\sum_{j=1}^{\infty}{|\lambda_j|^p}\Big)^{{1}/{p}}\leq C_{d,p,q}\big\Vert   b\big\Vert_{f_{p}^{s,q}}.
\end{equation*}
Moreoever, 
\begin{equation*}
\big\Vert  b  \big\Vert_{f_p^{s,q}}\approx \inf{\Big\{ \Big(\sum_{j=1}^{\infty}{|\lambda_j|^p}\Big)^{{1}/{p}}   :  b=\sum_{j=1}^{\infty}{\lambda_j r_j} ,~ r_j ~\text{is an $\infty$-atom for $f_p^{s,q}$}    \Big\}}.
\end{equation*}

\end{lemma}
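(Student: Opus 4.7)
The plan is to build the atomic decomposition by a Calder\'on--Zygmund stopping-time argument applied to the super-level sets of the ``norming function'' $G(x):=g^{s,q}(b)(x)\in L^p$. I would first set $\Omega_k:=\{x:G(x)>2^k\}$ for $k\in\mathbb{Z}$. For every admissible dyadic cube $Q$ (those with $l(Q)\le 1$) there is a unique integer $k=k(Q)$ satisfying $|Q\cap\Omega_k|>|Q|/2$ and $|Q\cap\Omega_{k+1}|\le |Q|/2$, because the first quantity tends to $|Q|$ as $k\to-\infty$ and to $0$ as $k\to+\infty$; this determines a disjoint partition $\{\mathcal{B}_k\}_{k\in\mathbb{Z}}$ of the admissible cubes. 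For each $k$, let $\{Q_{k,i}\}_i$ denote the family of maximal dyadic cubes satisfying $|Q\cap\Omega_k|>|Q|/2$; every $Q\in\mathcal{B}_k$ then sits in exactly one $Q_{k,i}$.

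Next I would define the candidate atom $r_{k,i}$ by restriction: set $r_{k,i,Q}:=\lambda_{k,i}^{-1}b_Q$ when $Q\in\mathcal{B}_k$ and $Q\subset Q_{k,i}$, and $r_{k,i,Q}:=0$ otherwise, with normalizing scalar $\lambda_{k,i}:=2^{k+1}|Q_{k,i}|^{1/p}$. The identity $b=\sum_{k,i}\lambda_{k,i}r_{k,i}$ holds entrywise by construction, and the support axiom of the $\infty$-atom definition is automatic. The heart of the proof will be the size bound $\Vert g^{s,q}(r_{k,i})\Vert_{L^{\infty}}\le |Q_{k,i}|^{-1/p}$. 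For this, fix $x\in Q_{k,i}$ and let $Q_{\star}$ be the smallest dyadic cube containing $x$ that lies in $\mathcal{B}_k$ (if none exists, $g^{s,q}(r_{k,i})(x)=0$ trivially). By minimality, every $Q\ni x$ appearing in the sum defining $g^{s,q}(r_{k,i})(x)$ satisfies $l(Q)\ge l(Q_{\star})$ and hence contains $Q_{\star}$ entirely. Consequently, for any $y\in Q_{\star}$,
\[
\sum_{\substack{Q\ni x,\ Q\in\mathcal{B}_k\\ Q\subset Q_{k,i}}}\bigl(|Q|^{-s/d-1/2}|b_Q|\bigr)^{q}\;\le\;\sum_{Q\ni y,\ l(Q)\le 1}\bigl(|Q|^{-s/d-1/2}|b_Q|\bigr)^{q}=G(y)^{q}.
\]
Because $Q_{\star}\in\mathcal{B}_k$ forces $|Q_{\star}\setminus\Omega_{k+1}|\ge |Q_{\star}|/2>0$, one may pick $y\in Q_{\star}\setminus\Omega_{k+1}$, whence $G(y)\le 2^{k+1}$; dividing by $\lambda_{k,i}$ gives $g^{s,q}(r_{k,i})(x)\le 2^{k+1}/\lambda_{k,i}=|Q_{k,i}|^{-1/p}$.

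For the $\ell^{p}$ bound on the coefficients, I would observe that $\bigcup_i Q_{k,i}\subset\{M^{d}\chi_{\Omega_k}>1/2\}$, where $M^{d}$ denotes the dyadic maximal operator; the weak $(1,1)$ inequality then yields $\sum_i |Q_{k,i}|\le 2|\Omega_k|$, so
\[
\sum_{k,i}|\lambda_{k,i}|^{p}\le 2^{p}\sum_k 2^{kp}\sum_i |Q_{k,i}|\lesssim \sum_k 2^{kp}|\Omega_k|\lesssim \int G^{p}=\Vert b\Vert_{f_p^{s,q}}^{p},
\]
the last comparison being a layer-cake computation. The converse inequality $\Vert b\Vert_{f_p^{s,q}}^{p}\lesssim\sum_j|\lambda_j|^{p}$ for an arbitrary representation $b=\sum_j\lambda_j r_j$ is routine: from $|b_Q|\le\sum_j|\lambda_j|\,|r_{j,Q}|$, the $\ell^q$ quasi-triangle inequality, and the elementary bound $(\sum a_j)^{p/q}\le\sum a_j^{p/q}$ (which uses $p/q\le 1$, precisely where the hypothesis $p\le q$ is needed), one obtains $g^{s,q}(b)(x)^{p}\le\sum_j|\lambda_j|^{p}g^{s,q}(r_j)(x)^{p}$; integrating and using $\int g^{s,q}(r_j)^{p}\le|Q_{0,j}|^{-1}|Q_{0,j}|=1$ from the atom normalization closes the argument.

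The main obstacle will be the $L^{\infty}$ estimate for $g^{s,q}(r_{k,i})$. A naive attempt to dominate the partial sum over $Q\in\mathcal{B}_k$ at the point $x$ directly by $2^{k}$ fails, because $x$ itself may lie in $\Omega_{k+1}$, where $G(x)$ is unconstrained. The stopping-cube device $Q_{\star}$, combined with the transfer of the pointwise bound to a conveniently chosen $y\in Q_{\star}\setminus\Omega_{k+1}$ via the nesting structure of dyadic cubes, is what circumvents this difficulty; the remaining steps are essentially bookkeeping.
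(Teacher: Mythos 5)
Your proof is correct and follows the standard stopping-time construction used in the references the paper cites for this lemma (Frazier--Jawerth and Grafakos, Theorem~6.6.3): the paper itself does not reprove the result, so there is no competing argument in the paper to compare against. Your super-level-set decomposition $\Omega_k=\{g^{s,q}(b)>2^k\}$, the assignment of each admissible $Q$ to a generation $k(Q)$ by the $50\%$-density test, the restriction-normalization producing the atoms $r_{k,i}$ supported on the maximal cubes $Q_{k,i}$, the dyadic weak-$(1,1)$ bound $\sum_i|Q_{k,i}|\le 2|\Omega_k|$ followed by a layer-cake computation, and the converse via the $q$-triangle inequality and $p/q\le 1$ (where the hypothesis $p\le q$ is used) are all exactly the standard ingredients and are correctly assembled.

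One small point deserves tightening. You invoke ``the smallest dyadic cube $Q_\star\ni x$ lying in $\mathcal{B}_k$,'' but the family $\{Q\in\mathcal{B}_k: Q\ni x,\ Q\subset Q_{k,i}\}$ need not have a minimal element: it can contain cubes of arbitrarily small side length. The estimate still goes through, but by a limiting argument rather than a single choice of $Q_\star$: for each $N$ apply your reasoning to the smallest such cube $Q_\star^{(N)}$ with $l(Q_\star^{(N)})\ge 2^{-N}$, obtaining the bound
\[
\sum_{\substack{Q\ni x,\ Q\in\mathcal{B}_k,\ Q\subset Q_{k,i}\\ l(Q)\ge 2^{-N}}}\bigl(|Q|^{-s/d-1/2}|b_Q|\bigr)^q\ \le\ G(y_N)^q\ \le\ 2^{(k+1)q}
\]
for a suitable $y_N\in Q_\star^{(N)}\setminus\Omega_{k+1}$, and then let $N\to\infty$ by monotone convergence. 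With that clarification, the proof is complete.

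Two cosmetic remarks, not errors: (i) the maximal cubes $Q_{k,i}$ may have side length larger than $1$, but the paper's definition of an $\infty$-atom places no restriction on $l(Q_0)$, so this is fine; (ii) you should explicitly discard cubes with $b_Q=0$ (setting $r_{j,Q}=0$ for them) so that $|Q\cap\Omega_k|\to|Q|$ as $k\to-\infty$ is justified, since $b_Q\ne 0$ forces $G\ge|Q|^{-s/d-1/2}|b_Q|>0$ on all of $Q$.
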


By (\ref{decomposition1}) and Lemma \ref{decomhardy}, $f\in F_p^{0,\infty}$ can be decomposed with $\{b_Q\}_{\substack{Q\in\mathcal{D}\\l(Q)\leq 1}}\in f_p^{0,\infty}$ and there exist a sequence of scalars $\{\lambda_j\}$ and a sequence of $\infty$-atoms $\{r_{j,Q}\}$ for $f_p^{0,\infty}$ such that
\begin{equation*}
f(x)=\sum_{{Q\in\mathcal{D}, l(Q)\leq 1}}{b_Q\vartheta^Q(x)}=\sum_{j=1}^{\infty}{\lambda_j \sum_{{Q\in\mathcal{D}, l(Q)\leq 1}}{ r_{j,Q}\vartheta^Q (x)  }}.
\end{equation*}
Then
\begin{align}\label{sup1}
\big\Vert   T^{(3)}f \big\Vert_{F_p^{0,p}} &\lesssim \Big\Vert  \Big(\sum_{k=3}^{\infty}{|T_{b_k}f|^p}\Big)^{{1}/{p}}   \Big\Vert_{L^p} \nonumber  \\
  &= \Big\Vert  \Big(\sum_{k=3}^{\infty}{\Big|   \sum_{j=1}^{\infty}{\lambda_j T_{b_k}\Big(\sum_{{Q\in\mathcal{D}_k, l(Q)\leq 1}}{r_{j,Q}\vartheta^Q}\Big)}   \Big|^p}\Big)^{{1}/{p}}    \Big\Vert_{L^p}\nonumber\\
  &\leq\Big( \sum_{j=1}^{\infty}{|\lambda_j|^p}\int_{\mathbb{R}^d}{ \sum_{k=3}^{\infty}{\Big| T_{b_k}\Big(    \sum_{{Q\in\mathcal{D}_k, l(Q)\leq 1}}{r_{j,Q}\vartheta^Q}          \Big)  \Big|^p}   }dx\Big)^{{1}/{p}}\nonumber\\
  &\lesssim \big( \sum_{j=1}^{\infty}{|\lambda_j|^p} \big)^{{1}/{p}}\sup_{j}{\Big\{  \Big(  \sum_{k=3}^{\infty}{\Big\Vert T_{b_k}\Big( \sum_{{Q\in\mathcal{D}_k, l(Q)\leq 1}}{r_{j,Q}\vartheta^Q}   \Big)  \Big\Vert_{L^p}^p  }     \Big)^{{1}/{p}}\Big\}}  
\end{align} by using triangle inequality for $p\leq1$ and  $l^p\subset l^1$.
Since 
\begin{equation*}
\Big( \sum_{j=1}^{\infty}{|\lambda_j|^p} \Big)^{{1}/{p}}\lesssim \Vert f \Vert_{F_p^{0,\infty}},
\end{equation*} it suffices to show the supremum in (\ref{sup1}) is bounded by a constant.

Let $Q_0$ be any dyadic cubes with side length $2^{-\mu}$ and $r_Q$ be an $\infty$-atom for $f_p^{0,\infty}$ with $Q_0$ and define 
\begin{equation*} 
R_{Q_0,k}(x):=\sum_{\substack{Q\in\mathcal{D}_k, Q\subset Q_0\\l(Q)\leq 1}}{r_Q\vartheta^Q(x)}.
\end{equation*} 
Then one obtains the desired result by showing
\begin{equation}\label{gggoal}
 \Big( \sum_{k=3}^{\infty}{\big\Vert T_{b_k}R_{Q_0,k}\Vert_{L^p}^p} \Big)^{{1}/{p}}\lesssim 1 \quad \text{uniformly in $Q_0$}.
\end{equation} 

Note that by (\ref{infdef}) one has \begin{equation}\label{rcondition}
|r_Q|  \leq |Q|^{1/2}|Q_0|^{-{1}/{p}}.
\end{equation}

Let $Q_0^*$ be a dilate of $Q_0$ by a factor of 10 and ${Q_0}^{**}$ by a factor of $100\sqrt{d}$.
Furthermore we denote by $Q_0^{\rho}$ a dilate of $Q_0$ with side $10l(Q_0)^{\rho}$, by $\widetilde{Q_0^{\rho}}$ with side $100\sqrt{d}l(Q_0)^{\rho}$. 

We first consider the case $l(Q_0)\leq 2^{-3}$ ( i.e. $\mu \geq 3$ ).
The condition $Q\subset Q_0$ in the definition of $R_{Q_0,k}$ ensures that $R_{Q_0,k}$ vanishes unless $\mu\leq k$. Hence the summation can be taken over  $k\geq \mu $ in (\ref{gggoal}).
For each $k$ we split the range of the integral into two parts, $\widetilde{Q_0^{\rho}}$ and $(\widetilde{Q_0^{\rho}})^c$. That is,
\begin{equation*}
 \Big( \sum_{k=3}^{\infty}{\big\Vert T_{b_k}R_{Q_0,k}\Vert_{L^p}^p} \Big)^{{1}/{p}}
\lesssim \Big( \sum_{k=\mu}^{\infty}{\big\Vert T_{b_k}R_{Q_0,k}\big\Vert_{L^p(\widetilde{Q_0^{\rho}})}^p} \Big)^{{1}/{p}}+\Big( \sum_{k=\mu}^{\infty}{\big\Vert T_{b_k}R_{Q_0,k}\big\Vert_{L^p(\widetilde{Q_0^{\rho}}^c)}^p} \Big)^{{1}/{p}}.
\end{equation*}

For the term corresponding to $\widetilde{Q_0^{\rho}}$ we use H\"older's inequality, (\ref{pqr}), (\ref{decomposition2}), and (\ref{rcondition}). Then it follows that for $p<r$
\begin{align*}
&\big\Vert T_{b_k}R_{Q_0,k}\big\Vert_{L^p(\widetilde{Q_0^{\rho}})}\leq|\widetilde{Q_0^{\rho}}|^{{1}/{p}-{1}/{r}}\big\Vert T_{b_k}R_{Q_0,k}  \big\Vert_{L^r}\\
    &\lesssim 2^{-\mu\rho d({1}/{p}-{1}/{r})}2^{-kd(1-\rho)(1/p-1/r)}\Vert  R_{Q_0,k} \Vert_{h^r}\\
    &\lesssim  2^{-\mu\rho d({1}/{p}-{1}/{r})}2^{-kd(1-\rho)(1/p-1/r)}  \Big\Vert \Big(\sum_{Q\in\mathcal{D}_k,Q\subset Q_0}{(|r_Q||Q|^{-{1}/{2}}\chi_Q)^2}\Big)^{{1}/{2}}\Big\Vert_{L^r}\\
    &\leq 2^{\mu d(1-\rho)({1}/{p}-{1}/{r})}2^{-kd(1-\rho)(1/p-1/r)}.
\end{align*}
 This proves
\begin{equation*}
\Big( \sum_{k=\mu}^{\infty}{\big\Vert T_{b_k}R_{Q_0,k}\Vert_{L^p(\widetilde{Q_0^{\rho}})}^p} \Big)^{{1}/{p}} \lesssim 1
\end{equation*}uniformly in $\mu$.

For the latter one we split it into 
\begin{equation}\label{s2}
\Big( \sum_{k=\mu}^{\infty}{\big\Vert T_{b_k}(\chi_{(Q_0^*)^c}R_{Q_0,k})\Vert_{L^p(\widetilde{Q_0^{\rho}}^c)}^p} \Big)^{{1}/{p}}
\end{equation} and 
\begin{equation}\label{s1}
\Big( \sum_{k=\mu}^{\infty}{\big\Vert T_{b_k}(\chi_{Q_{0}^*}R_{Q_0,k})\Vert_{L^p(\widetilde{Q_0^{\rho}}^c)}^p} \Big)^{{1}/{p}}
\end{equation} and our claim is that each part can be controlled by a constant independent of $\mu$.

By applying (\ref{pqr}) we write
 \begin{equation*}
(\ref{s2})\lesssim \Big(\sum_{k=\mu}^{\infty}{\big\Vert  \phi_k\ast(\chi_{(Q_0^*)^c}R_{Q_0,k}) \big\Vert_{h^p}^p}\Big)^{{1}/{p}}
\end{equation*} and the Fourier support of $\phi_k$ implies the estimate
\begin{align*}
& \big\Vert  \phi_k\ast\big(\chi_{(Q_0^*)^c}R_{Q_0,k}\big) \big\Vert_{h^p}\\
&\lesssim \big\Vert  \phi^{(-1)}_k\ast\big(\chi_{(Q_0^*)^c}R_{Q_0,k}\big) \big\Vert_{L^p}+\big\Vert  \phi^{(0)}_k\ast\big(\chi_{(Q_0^*)^c}R_{Q_0,k}\big) \big\Vert_{L^p}+\big\Vert  \phi^{(1)}_k\ast\big(\chi_{(Q_0^*)^c}R_{Q_0,k}\big) \big\Vert_{L^p}
\end{align*} where $\phi_k^{(-1)}=\phi_{k-1}\ast\phi_{k}$, $\phi_k^{(0)}=\phi_{k}\ast\phi_{k}$, and $\phi_k^{(1)}=\phi_{k+1}\ast\phi_{k}$.
For each $j\in\{-1,0,1\}$,
\begin{align*}
&\big\Vert  \phi^{(j)}_k\ast\big(\chi_{(Q_0^*)^c}R_{Q_0,k}\big) \big\Vert_{L^p}^p\leq\int_{\mathbb{R}^d}{\Big(\int_{y\in{(Q_0^*)}^c}{\big|\phi^{(j)}_k(x-y)\big|\sum_{{Q\in\mathcal{D}_k, Q\subset Q_0}}{|r_Q||\vartheta^Q(y)|}}dy\Big)^p}dx\\
&\lesssim 2^{-kp(N-{d}/{2})}\int_{\mathbb{R}^d}{\Big(\int_{y\in{(Q_0^*)}^c}{|\phi^{(j)}_k(x-y)|\sum_{{Q\in\mathcal{D}_k, Q\subset Q_0}}{\frac{|r_Q|}{|y-x_Q|^N}}}dy\Big)^p}dx\\
&\leq 2^{-kp(N-{d}/{2})}\Big(\sum_{{Q\in\mathcal{D}_k, Q\subset Q_0}}{|r_Q|}\Big)^{p}\int_{\mathbb{R}^d}{\Big(\int_{y\in{(Q_0^*)}^c}{\frac{|\phi^{(j)}_k(x-y)|}{|y-c_{Q_0}|^N}}dy\Big)^p}dx\\
&\leq 2^{-kp(N-d)}2^{\mu dp({1}/{p}-1)}   \int_{\mathbb{R}^d}{\Big(\int_{y\in{(Q_0^*)}^c}{\frac{|\phi^{(j)}_k(x-y)|}{|y-c_{Q_0}|^N}}dy\Big)^p}dx\\
&\lesssim 2^{-kp(N-2d+{d}/{p})}2^{\mu dp({1}/{p}-1)} \Big(  \int_{y\in{(Q_0^*)}^c}{ \dfrac{1}{|y-c_{Q_0}|^N}  \int_{\mathbb{R}^d}{(1+2^k|x-c_{Q_0}|)^{{L}/{p}}|\phi^{(j)}_k(x-y)|}dx}dy\Big)^p\\
&\lesssim 2^{-kp(N-2d+{d}/{p}-L/p)}2^{\mu dp({1}/{p}-1)}\Big(\int_{y\in{(Q_0^*)}^c}{\dfrac{1}{|y-c_{Q_0}|^{N-{L}/{p}}}}dy\Big)^p \\
&\lesssim 2^{-kp(N-2d+{d}/{p}-L/p)}2^{\mu dp({1}/{p}-1)}2^{\mu p(N-{L}/{p}-d)}
\end{align*} for ${L}>d(1-p)$ and $N-{L}/{p}>d$. Here the third inequality follows from the fact that 
\begin{equation*}
|y-x_Q|\gtrsim |y-c_{Q_0}|
\end{equation*} for $y\in (Q_0^*)^c$ and $Q\subset Q_0$, and the fourth one holds because of (\ref{rcondition}). 
Finally we obtain
\begin{equation*}
(\ref{s2}) \lesssim  1.
\end{equation*}

For (\ref{s1}) let $K_k(x,y)$ be the kernel of $T_{b_k}$ and write
\begin{equation*}
\big\Vert T_{b_k}(\chi_{Q_{0}^*}R_{Q_0,k})\Vert_{L^p(\widetilde{Q_0^{\rho}}^c)} \leq \Big[ \int_{\widetilde{Q_0^{\rho}}^c}{ \Big(  \int_{Q_0^*}{\big|  K(x,y) \big||R_{Q_0,k}(y)|}dy  \Big)^p }dx \Big]^{1/p}.
\end{equation*} 
Using H\"older's inequality and (\ref{rcondition})
it is less than
\begin{align*}
&2^{\mu\rho( L/p-d/p+d  )}\Vert  R_{Q_0,k}\Vert_{L^1}\sup_{y\in Q_0^*}{\int_{\widetilde{Q_0^{\rho}}^c }{|x-y|^{L/p}|K_k(x,y)|}dx}\\
&\leq 2^{\mu\rho L/p}2^{\mu d(1-\rho)(1/p-1)}\sup_{y\in Q_0^*}{\int_{\widetilde{Q_0^{\rho}}^c }{|x-y|^{L/p}|K_k(x,y)|}dx}
\end{align*} for $L>d(1-p)$. Here we select $L>d(1-p)$ such that $L/p$  becomes an integer.
Then it suffices to show  \begin{equation}\label{ooo}
\sup_{y\in Q_0^*}{\int_{\widetilde{Q_0^{\rho}}^c }{|x-y|^{L/p}\big|K_k(x,y)\big|}dx}
\lesssim 2^{-k\rho L/p} 2^{-kd(1-\rho)(1/p-1)}2^{\epsilon (\mu-k)}
\end{equation} for some $\epsilon>0$.
Let $y\in Q_0^*$. By Cauchy-Schwarz inequality and the fact that $|x-c_{Q_0}|\lesssim |x-y|$ one obtains
\begin{equation*}
\int_{\big(  \widetilde{Q_0^{\rho}} \big)^{c}}{ |x-y|^{L/p}\big| K_k(x,y) \big|  }dx\lesssim2^{\mu \rho(|\alpha|-d/2)}\Big( \int_{\mathbb{R}^d}{\big| (x-y)^{\alpha+\beta}K_k(x,y)  \big|^2}dx \Big)^{1/2}
\end{equation*} for a multi-index $\beta$ with $|\beta|=L/p$. 
Define 
\begin{equation*}
c_k(y,\eta):=\int_{\mathbb{R}^d}{K_k(x+y,y)e^{-2\pi i \langle x,\eta \rangle }}dx.
\end{equation*} Then by Plancherel's theorem
\begin{equation*}
\Big( \int_{\mathbb{R}^d}{\big| (x-y)^{\alpha+\beta}K_k(x,y)  \big|^2}dx \Big)^{1/2}=\Big( \int_{\mathbb{R}^d}{\big| \partial_{\eta}^{\alpha+\beta}c_k(y,\eta)  \big|^2}d\eta \Big)^{1/2}.
\end{equation*}
Observe that $\overline{c_k(y,\eta)}$ can be interpreted as a symbol corresponding to the adjoint operator of $T_{{b_k}}$ and therefore $c_k$ belongs to $\mathcal{S}_{\rho,\rho}^m$(See Appendix for more details). Furthermore $\eta$ lives in  the annulus $\{\eta: 2^{k-2}\leq |\eta|\leq 2^{k+2}\}$. Therefore we have
\begin{equation*}
\int_{\big(  \widetilde{Q_0^{\rho}} \big)^{c}}{ |x-y|^{L/p}\big| K_k(x,y) \big|  }dx
\lesssim 2^{-k\rho L/p}2^{-kd(1-\rho)(1/p-1)}2^{\rho (|\alpha|-d/2)(\mu-k)},
\end{equation*} which concludes (\ref{ooo}).\\

Now assume $l(Q_0)> 2^{-3} $( i.e. $\mu<3$ ). In this case we employ the range $k\geq 3$ in our summations since $\mu\leq 2$. 
Then by repeating the above process we see that 
\begin{equation*}
\Big( \sum_{k=3}^{\infty}{\big\Vert T_{b_k}R_{Q_0,k}\Vert_{L^p({Q_0^{**}})}^p} \Big)^{{1}/{p}} \lesssim 1,
\end{equation*}
\begin{equation*}
\Big( \sum_{k=3}^{\infty}{\big\Vert T_{b_k}(\chi_{(Q_0^*)^c}R_{Q_0,k})\Vert_{L^p(({Q_0^{**}})^c)}^p} \Big)^{{1}/{p}} \lesssim 1,
\end{equation*} and 
\begin{equation*}
\Big( \sum_{k=3}^{\infty}{\big\Vert T_{b_k}(\chi_{Q_{0}^*}R_{Q_0,k})\Vert_{L^p( ({Q_0^{**}})^c)}^p} \Big)^{{1}/{p}} \lesssim 1
\end{equation*} 
uniformly in $\mu\leq 2$.
This completes the proof of the case $0<p\leq 1$ in Theorem \ref{main}.

\subsection{ Proof of (\ref{goal}); the case $2<p<\infty$  }

Suppose $2<p<\infty$ and we will prove (\ref{goal}) with $0<t\leq \infty $ and $q=p$.
In \cite{Pr_Ro_Se} the boundedness of (\ref{multiplierexample}) from $F_p^{0,p}$ into $F_{p}^{0,t}$ was established as a corollary of the following result.
Let $0<a<d$, $\epsilon>0$ and $1<p_0<p<\infty$. Consider operators $T_k$ defined on  $\mathcal{S}(\mathbb{R}^d)$ by 
\begin{equation*}
T_kf(x):=\int_{\mathbb{R}^d}{K_k(x,y)f(y)}dy,
\end{equation*} where each $K_k$ is a continuous and bounded kernel. 
Assume that $T_k$ satisfies 
\begin{equation}\label{con1}
\sup_{k>0}{2^{ka/{p}}\Vert  T_k \Vert_{L^{p}\to L^{p}}}\leq A
\end{equation} and
\begin{equation}\label{con2}
\sup_{k>0}{2^{ka/{p_0}}\Vert  T_k \Vert_{L^{p_0}\to L^{p_0}}}\leq B_0.
\end{equation}
Furthermore let $\Gamma\geq 1$ and assume that for each cube $Q$ there is a measurable set $\mathcal{E}_Q$ so that 
\begin{equation*}
|\mathcal{E}_Q|\leq \Gamma \max\{|Q|^{1-a/d},|Q|\},
\end{equation*} and for every $k\in\mathbb{N}$ and every cube $Q$ with $2^kl(Q)\geq 1$,
\begin{equation}\label{con3}
\sup_{x\in Q}{\int_{\mathbb{R}^d\setminus \mathcal{E}_Q}{|K_k(x,y)|}dy } \leq B_1 \max\big\{ (2^kl(Q))^{-\epsilon},2^{-k\epsilon}   \big\}.
\end{equation}
Let $\mathcal{B}=B_0^{p_0/p}(A\Gamma^{1/p}+B_1)^{1-p_0/p}$. Then there exists a $C>0$ such that
\begin{equation*}
\Big\Vert  \Big(  \sum_{k}{2^{kar/p}|\phi_k\ast T_kf_k|^q}   \Big)^{1/q}   \Big\Vert_{L^p} \leq CA\Big[ \log(3+\dfrac{\mathcal{B}}{A})  \Big]^{1/q-1/p}\Big(\sum_{k}{\Vert  f_k \Vert_{L^p}^p} \Big)^{1/p}.
\end{equation*}

We set $a=(1-\rho)d$ and  for cube Q with $l(Q)<1$ choose $\mathcal{E}_Q$ to be the cube with the same center, but  with diameter $Cl(Q)^{\rho}$  for large $C$. If $l(Q)\geq 1$, $\mathcal{E}_Q$  is  just a dilate of Q by a factor of large constant $C$. Let 
\begin{equation}
\widetilde{b}_k(x,\xi):=(1+|\xi|^2)^{-d(1-\rho)/(2p)}b_k(x,\xi)
\end{equation} and define $T_k:=T_{\widetilde{b}_k}$.
Then operators $T_k$ obviously satisfy (\ref{con1}) and (\ref{con2}) with $p_0=2$. Thus it suffices to show that (\ref{con3}) still holds with our kernel
\begin{equation}
K_k(x,y)=\int_{\mathbb{R}^d}{\widetilde{b}_k(x,\xi)e^{2\pi i\langle x-y,\xi\rangle }}d\xi.
\end{equation}
Fix $x\in Q$ and then for $y\in \mathbb{R}^d\setminus \mathcal{E}_Q$ we see $|x-y|\gtrsim l(\mathcal{E}_Q)$. 
Therefore
\begin{align*}
\int_{\mathbb{R}^d\setminus \mathcal{E}_Q}{|K_k(x,y)|}dy&\leq \int_{|x-y|\gtrsim l(\mathcal{E}_Q)}{|K_k(x,y)|}dy\\
    &\lesssim l(\mathcal{E}_Q)^{-|\alpha|+d/2}\Big( \int_{\mathbb{R}^d}{|(x-y)^{\alpha}K_k(x,y)|^2}dy  \Big)^{1/2}\\
    &= l(\mathcal{E}_Q)^{-|\alpha|+d/2}\Big( \int_{\mathbb{R}^d}{|\partial_{\xi}^{\alpha}\widetilde{b_k}(x,\xi)|^2}d\xi  \Big)^{1/2}
\end{align*} for any multi-indices $\alpha$ with $|\alpha|>d/2$ by Cauchy-Schwarz inequality and Plancherel's theorem.
Since $\widetilde{b}_k\in S_{\rho,\rho}^{-d(1-\rho)/2}$ this is bounded by
\begin{equation*}
\big(2^{k\rho}l(\mathcal{E}_Q)\big)^{-|\alpha|+d/2}.
\end{equation*}
By choosing $\alpha$ satisfying $\rho(|\alpha|-d/2)>\epsilon$ we prove (\ref{con3}) and it completes the proof of (\ref{goal}).

\section{\textbf{Proof of Theorem \ref{besov}}}\label{bproof}

Theorem \ref{besov} can be proved in a similar way. The first one is simply from (\ref{finalcoral}), H\"older's inequality, and the embedding theorem $l^{p_1}\subset l^{p_2}$ for $p_1\leq p_2$. By repeating the process in Section \ref{basic}, (\ref{12}) and (\ref{34}) hold if  $F$-spaces are replaced by $B$-spaces, and the boundedness of $T^{(3)}$ on $B$-spaces follows just from (\ref{pqr}).

\section{\textbf{Proof of Theorem \ref{mmm} and \ref{mmmm}}}\label{infinityproof}

Now we consider the case $p=\infty$. Unlike the case $1<p<\infty$ we do not have  $L^{\infty}$ boundedness of the operator. Instead, Fefferman \cite{Fe} proved that $T_a$ maps $L^{\infty}$ into $BMO$ and the key idea of the proof is 
the following $L^{\infty}$ estimates with an additional support condition of $a$.
For any $a\in \mathcal{S}_{\rho,\delta}^{m}$ and $r>0$ if $a(x,\cdot)$ is supported  in $\{\xi:r/2 \leq |\xi|\leq 2r\}$ and $m=-d(1-\rho)/2$ then one has 
\begin{equation}\label{flemma}
\big\Vert T_{a}f \big\Vert_{L^{\infty}}\lesssim \Vert a \Vert_{\mathcal{S}_{\rho,\delta}^{m}}\Vert f\Vert_{L^{\infty}}
\end{equation} 
 where 
 \begin{equation*}
 \Vert  a \Vert_{\mathcal{S}_{\rho,\delta}^{m}}=\sup_{|\alpha|,|\beta|\leq d}{\big|  \partial_{\xi}^{\alpha}\partial_x^{\beta}a(x,\xi) \big|(1+|\xi|)^{-m+\rho|\alpha|-\delta|\beta|}}.
 \end{equation*}
 Here the implicit constant is independent of $r$.
Also if $a(x,\cdot)$ is supported in a ball of radius $R$ centered at the origin for some constant $R>0$, then there exists $C_R>0$ such that
\begin{equation}\label{flemma2}
\big\Vert  T_{a}f\big\Vert_{L^{\infty}}\leq C_{R}\Vert a \Vert_{\mathcal{S}_{\rho,\delta}^{m}}\Vert f\Vert_{L^{\infty}}.
\end{equation}

\subsection{Proof of Theorem \ref{mmm}}
One may assume $s_1=s_2=0$ without loss of generality. 
We will prove that for $0<t<1$
\begin{equation*}
\big\Vert T_af\big\Vert_{F_{\infty}^{0,t}}\lesssim \Vert f\Vert_{F_{\infty}^{0,\infty}}
\end{equation*} and
other cases follow by embedding 
$F_{\infty}^{0,q_1}\hookrightarrow F_{\infty}^{0,q_2}$ for $0<q_1<q_2\leq \infty$. Let $m=-d(1-\rho)/2$ and $f\in F_{\infty}^{0,\infty}$. 
Note that \begin{equation*}
\Vert T_af\Vert_{F_{\infty}^{0,t}} \leq \sum_{k=0}^{10}{\big\Vert \phi_k\ast T_af \big\Vert_{L^{\infty}}}+\sup_{l(Q)<1}{\Big(\frac{1}{|Q|}\int_Q{\sum_{k\geq 10-\log_2{l(Q)}}{  |\phi_k\ast T_af(x)|^t}}dx\Big)^{1/t}}.
\end{equation*}
We consider just the supremum term and a similar method can be applied to the first one.
We shall base the proof on the arguments in Section \ref{basic}  and use same notations.
Write 
\begin{equation*}
T_af=T^{(1)}f+T^{(2)}f+T^{(3)}f
\end{equation*} and it suffices to show that for a fixed dyadic cube $Q$ of side length $l(Q)<1$
\begin{equation}\label{fffgoal}
 \frac{1}{|Q|}\int_Q{\sum_{k\geq 10-\log_2{l(Q)}}{\big|\phi_k\ast T^{(i)}f(x)\big|^t}}dx   \lesssim \sup_{k}{\Vert  f_k \Vert^t_{L^{\infty}}}
\end{equation} uniformly in $Q$ for each $i=1,2,3$, 
where $\widetilde{\phi}_k=\phi_{k-1}+\phi_k+\phi_{k+1}$ and $f_k=\widetilde{\phi}_k\ast f$.

First of all, one has
\begin{equation*}
 \frac{1}{|Q|}\int_Q{\sum_{k\geq 10-\log_2{l(Q)}}{\big|\phi_k\ast T^{(1)}f(x)\big|^t}}dx  \lesssim \sum_{k\geq 10-\log_2{l(Q)}}{  \sum_{j={k-2}}^{k+2}{\sum_{n=0}^{j-3}{  \Vert T_{a_{j,n}}f_n \Vert^t_{L^{\infty}}    }}    }.
\end{equation*} 
By using the same argument to get (\ref{firstkey}) with $r=1$ and $c^{(r,d)}=d+1$, we obtain
\begin{align}\label{firstkey1}
\Vert  T_{a_{j,n}} f_n  \Vert_{L^{\infty}}&\lesssim  2^{k(m+(d+1)(1-\rho)+\rho N)-jN}\big\Vert \mathcal{M} f_n \big\Vert_{L^{\infty}}\nonumber
   &\leq  2^{k(m+(d+1)(1-\rho)+\rho N)-jN}  \sup_{l}{\Vert f_l \Vert_{L^{\infty}}}
   \end{align} for sufficiently large $N$. This proves (\ref{fffgoal}) when $i=1$.

Now we deal with $T^{(2)}$. We break up this operator into two parts as (\ref{secondoperator}). Then
 (\ref{flemma}) and  (\ref{flemma2}) yield the desired result for the finite sum. For the infinite sum, we see that
\begin{equation*}
 \frac{1}{|Q|}\int_Q{\sum_{k\geq 10-\log_2{l(Q)}}{  \Big|  \phi_k\ast\Big(\sum_{n=10}^{\infty}{T_{a_n} f}\Big)(x)   \Big|^t }}dx \lesssim \sum_{k\geq 10-\log_2{l(Q)}}{\sum_{n=10}^{\infty}{ \big\Vert  \phi_k\ast (T_{a_n} f_n)   \big\Vert^t_{L^{\infty}}    }}.
\end{equation*}
Recall that $a_n(x,\xi)=\Phi_n\ast a(\cdot,\xi)(x)\widehat{\phi_n}(\xi)$ where $\Phi_n=\sum_{j=n-2}^{n+2}{\phi_j}$.
From the support properties of $\widehat{\phi_k}$ and $\widehat{T_{a_n}f_n}$  it follows immediately that the summand vanishes unless $k\leq n+4$. Thus the last expression is bounded by a constant times
\begin{equation*}
\sum_{k\geq 10-\log_2{l(Q)}}{\sum_{n={k-4}}^{\infty}{ \big\Vert   T_{a_n} f_n   \big\Vert^t_{L^{\infty}}    }}.
\end{equation*} 
Now our claim is that for any $J>0$ \begin{equation}\label{compactcon}
\Vert T_{a_n}f_n \Vert_{L^{\infty}}\lesssim_J 2^{-Jn} \sup_{l}{\Vert f_l\Vert_{L^{\infty}}},
\end{equation} which completes the proof for $T^{(2)}$.
To see (\ref{compactcon}) we apply the size estimate (\ref{sizeest}), but it may not be true when we drop the hypothesis of compact support of $a(x,\xi)$ in $x$ variable.  Thus, first define $a^{\tau}(x,\xi)$ as (\ref{supvar}) and let $a^{\tau}_n(x,\xi):=\Phi_n\ast a^{\tau}(\cdot,\xi)(x)\widehat{\phi_n}(\xi)$. When $K_n^{\tau}(x,y)$ is the kernel of $T_{a_n^{\tau}}$, then  for any $J>0$ and $N>0$ we have 
\begin{equation*}
|K_n^{\tau}(x,y)|\lesssim_{J,N}2^{-Jn}\frac{1}{(1+|x-y|)^N}
\end{equation*}  uniformly in $\tau$. Thus, 
\begin{equation*}
\Vert T_{a^{\tau}_n}f_n \Vert_{L^{\infty}}\leq\big\Vert   f_n \big\Vert_{L^{\infty}} \sup_{x\in\mathbb{R}^d}{\int_{\mathbb{R}^d}{|K_n^{\tau}(x,y)|}dy}\lesssim_J2^{-Jn}\sup_{l}{\Vert  f_l \Vert_{L^{\infty}}}
\end{equation*} and this estimate holds uniformly in $\tau$.
Then (\ref{compactcon}) follows from the fact that
\begin{align*}
\limsup_{\tau\to\infty}{\big\Vert T_{a_n^{\tau}}f_n-T_{a_n}f_n  \big\Vert_{L^{\infty}}}= \limsup_{\tau\to\infty}{\sup_{|x|\geq\tau}{|T_{a_n}f_n(x)|}} \lesssim_n \limsup_{\tau\to\infty}{\tau^{-1}}=0
\end{align*} where the inequality follows by an integration by parts.

For the operator $T^{(3)}$, as in Section \ref{basic}, we write $T^{(3)}f$  as 
\begin{equation*}
T^{(3)}f=\sum_{n=3}^{\infty}{T_{b_n}f}
\end{equation*} where
$b_n(x,\xi)=\sum_{j=0}^{n-3}{a_{j,n}(x,\xi)}=\big( \sum_{j=0}^{n-3}{\phi_j} \big)\ast a(\cdot,\xi)(x)\widehat{\phi_n}(\xi)$.
Since $\widehat{T_{b_n}f}$ is supported in $\{\xi:2^{n-2}\leq|\xi|\leq 2^{n+2}\}$ we have
\begin{align*}
& \frac{1}{|Q|}\int_Q{\sum_{k\geq 10-\log_2{l(Q)}}{\big|\phi_k\ast T^{(3)}f(x)\big|^t}}dx\\
&\lesssim \sum_{k\geq 10-\log_2{l(Q)}}{ \sum_{n={k-2}}^{k+2}{  \frac{1}{|Q|}\int_Q{\big| \phi_k\ast (T_{b_n}f_n)(x) \big|^t}dx   }  }.
\end{align*} We consider only the case $n=k$ and other cases follow by the same way.
Now let us apply Fefferman's method in \cite{Fe}. Let $\Psi$ be a bump function satisfying $0\leq\Psi\leq 10$, $\Psi\geq 1$ on $[-1/2,1/2]^d$, and $Supp(\widehat{\Psi})\subset \{\xi: |\xi|\leq 2^{-3}\}$. Define $\Psi_Q(x):=\Psi\big(l(Q)^{-\rho}(x-c_Q) \big)$ where $c_Q$ is the center of cube $Q$. Then $\Psi_Q$ has the properties
$0\leq \Psi_Q\leq 10$,
$\Psi_Q\geq 1$ on  $Q$,
$Supp(\widehat{\Psi_Q})\subset \{\xi:|\xi|\leq 2^{-3}l(Q)^{-\rho}\}$,
and
$\Vert \widehat{\Psi_Q}\Vert_{L^{\infty}}\lesssim l(Q)^{\rho d}$.
Then it follows that
\begin{align*}
& \sum_{k\geq 10-\log_2{l(Q)}}\frac{1}{|Q|}\int_Q{{\big|\phi_k\ast(T_{b_k} f_k)(x)\big|^t}}dx\nonumber\\
&\leq \sum_{k\geq 10-\log_2{l(Q)}}\frac{1}{|Q|}\int_Q{{\big|\phi_k\ast(T_{b_k} f_k)(x)\Psi_Q(x)\big|^t}}dx\nonumber\\
&\lesssim \sum_{k\geq 10-\log_2{l(Q)}}{\frac{1}{|Q|}\int_Q{    \big| \phi_k\ast (T_{b_k}f_k)(x)\Psi_Q(x)-\phi_k\ast \big(\Psi_QT_{b_k}f_k\big)(x)  \big|^t   }dx  }\\
&\relphantom{=} +\sum_{k\geq 10-\log_2{l(Q)}}{\frac{1}{|Q|}\int_Q{  \big| \phi_k\ast \big(\Psi_Q T_{b_k}f_k-T_{b_k}(f_k \Psi_Q)\big)(x)  \big|^t     }dx  }\\
&\relphantom{=} +\sum_{k\geq 10-\log_2{l(Q)}}\frac{1}{|Q|}\int_Q{{\big|\phi_k\ast\big( T_{b_k}(f_k \Psi_Q)\big)(x)\big|^t}}dx \\
&:= I+II+III.
\end{align*}

By elementary computation one obtains 
\begin{equation*}
\big\Vert \phi_k\ast(T_{b_k}f_k)\Psi_Q-\phi_k\ast\big(\Psi_QT_{b_k}f_k\big)  \big\Vert_{^{\infty}}\lesssim 2^{-k}l(Q)^{-\rho}\Vert T_{b_k}f_k \Vert_{L^{\infty}}.
\end{equation*}
Furthermore, since $\Vert b_k\Vert_{\mathcal{S}_{\rho,\rho}^m}\lesssim \Vert a\Vert_{\mathcal{S}_{\rho,\rho}^{m}}$ uniformly in $k$ it follows that
\begin{equation}\label{secondkey}
\Vert T_{b_k}f_k \Vert_{L^{\infty}}\lesssim \Vert a\Vert_{\mathcal{S}_{\rho,\rho}^{m}}\sup_{l}{\Vert f_l\Vert_{L^{\infty}}}
\end{equation} by applying (\ref{flemma}). Combining these two estimates and summing over $k\geq 10-\log_2{l(Q)}$ one obtains
\begin{equation*}
I\lesssim \sup_{l}{\Vert f_l\Vert^t_{L^{\infty}}}.
\end{equation*}

For the second one,
$\Psi_QT_{b_k}f_k-T_{b_k}(f_k\Psi_Q)$ can be written in the form $T_{v_k}f_k$
where
\begin{equation*}
v_k(x,\xi)=\int_{\mathbb{R}^d}{ \big( b_k(x,\xi)-b_k(x,\eta+\xi)   \big)\widehat{\Psi_Q}(\eta)e^{2\pi i\langle x,\eta\rangle}         }d\eta
\end{equation*}  is a symbol in $\mathcal{S}_{\rho,\rho}^{m}$. 
Indeed, 
\begin{equation}\label{symbolnorm}
\Vert v_k\Vert_{\mathcal{S}_{\rho,\rho}^{m}}\lesssim 2^{-\rho k}l(Q)^{-\rho}\Vert a\Vert_{\mathcal{S}_{\rho,\rho}^{m}}.
\end{equation}

Combining (\ref{flemma}) and (\ref{symbolnorm}), 
\begin{equation*}
\big\Vert  \Psi_QT_{b_k}f_k-T_{b_k}(f_k\Psi_Q) \big\Vert_{L^{\infty}} \lesssim 2^{-\rho k}l(Q)^{-\rho}\sup_{l}{\Vert f_l\Vert_{L^{\infty}}},
\end{equation*} which establishes
\begin{equation*}
II  \lesssim \sup_{l}{\Vert f_l\Vert^t_{L^{\infty}}}.
\end{equation*}

For the last one we apply H\"older's inequality with $2/t>1$, Young's inequality, and (\ref{pqr}), and then 
\begin{align*}
III&\lesssim \sum_{k\geq 10-\log_2{l(Q)}}{ \frac{1}{|Q|^{t/2}}\big\Vert  T_{b_k}(f_k\Psi_Q) \big\Vert^t_{L^2}   }\lesssim\frac{1}{|Q|^{t/2}}\sum_{k\geq 10-\log_2{l(Q)}}{  2^{-kdt(1-\rho)/2}\Vert  f_k\Psi_Q \Vert_{L^2}^t }\\
     &\leq \frac{1}{|Q|^{t/2}}\sup_{l}{\Vert  f_l\Vert^t_{L^{\infty}}}\sum_{k\geq 10-\log_2{l(Q)}}{  2^{-kdt(1-\rho)/2}\Vert \Psi_Q \Vert_{L^2}^t }\lesssim\sup_{l}{\Vert f_l\Vert^t_{L^{\infty}}}.
\end{align*}
This ends the proof for $T^{(3)}$.

\subsection{Proof of Theorem \ref{mmmm}}
By (\ref{firstkey}) and (\ref{secondkey}), $T^{(1)}$ and $T^{(2)}$ map $B_{\infty}^{s_1,q}$ into $B_{\infty}^{s_2,t}$ for any $0<q,t\leq\infty$. The boundedness of $T^{(3)}$ is immediately from (\ref{flemma}) for both cases (1) and (2).

\section{\textbf{Proof of Theorem \ref{sharptheorem1} and \ref{sharptheorem2}}}\label{sharpresult}
 We may assume $s_1=s_2=0$.

\subsection{Proof of Theorem \ref{sharptheorem1} ; The case $0<p<\infty$}\label{ghghghgh}

\subsubsection{Proof of Theorem \ref{sharptheorem1} (1)}
Recall that for $0<p<\infty$  $h^p$ boundedness of $c_{m,\rho}(D)$ does not hold without the assumption (\ref{cexample}). For details
see \cite{Fe, Hi, Wa} for $1<p<\infty$ and \cite{Mi} for $0<p\leq 1$.
Suppose $m>-d(1-\rho)\big|1/2-1/p \big|$ and
choose $\epsilon>0$ such that 
\begin{equation*}
m-2\epsilon >-d(1-\rho)\big| {1}/{2}-{1}/{p}  \big|.
\end{equation*}
Then we know that $c_{m-2\epsilon,\rho}(D)$ is not bounded in $F_p^{0,2} (=h^p)$ and thus there exists $f\in F_{p}^{0,2}$ so that $\big\Vert c_{m-2\epsilon,\rho}(D)f\big\Vert_{F_p^{0,2}}=\infty$. We define 
\begin{equation*}
g(x):=\sum_{k=0}^{\infty}{2^{-\epsilon k}\phi_k\ast f(x)}
\end{equation*} and then observe that
\begin{equation*}
\Vert g\Vert_{F_p^{\epsilon,2}}\approx\Vert f\Vert_{F_p^{0,2}}<\infty
\end{equation*} and 
\begin{equation*}
\big\Vert c_{m,\rho}(D)g\big\Vert_{F_p^{-\epsilon,2}}\approx\big\Vert c_{m-\epsilon,\rho}(D)g\big\Vert_{F_p^{0,2}}\approx\big\Vert c_{m-2\epsilon,\rho}(D)f \big\Vert_{F_p^{0,2}}=\infty.
\end{equation*}
Then the embeddings $F_p^{\epsilon,2}\hookrightarrow F_p^{0,q}$ and $F_p^{0,t}\hookrightarrow F_p^{-\epsilon,2}$ proves (1).

\subsubsection{Proof of Theorem \ref{sharptheorem1} (2) }

Now suppose  
\begin{equation*}
m=-d(1-\rho)\big|{1}/{2}-{1}/{p} \big|
\end{equation*} and consider the condition of $q$ and $t$. 
Christ and Seeger \cite{Ch_Se}  show that $C_m,\rho$(D) in (\ref{multiplierexample}) is unbounded in $F_p^{0,q}$ provided that $0<q<p\leq 2$ with 
\begin{equation*}
m=-d(1-\rho)\big({1}/{p}-{1}/{2}\big).
\end{equation*}
For $r>0$ we define  $\mathcal{E}(r)$ to be the space of all distributions whose Fourier transforms are supported in $\{\xi: |\xi|\leq 2r\}$  as before.
\begin{theorem} \cite{Ch_Se}\label{Se}
Let $0<q<p\leq 2$, $0<\rho < 1$, $m=-d(1-\rho)\big({1}/{p}-{1}/{2}\big)$. Then, for $R\geq 2$, 
\begin{equation*}
\sup{\{\big\Vert  c_{m,\rho}(D)f   \big\Vert_{F_{p}^{0,q}}:\Vert   f \Vert_{F_{p}^{0,q}}\leq 1,f\in\mathcal{E}(R)\}} \approx (\log{R})^{{1}/{q}-{1}/{p}}.
\end{equation*} 
\end{theorem}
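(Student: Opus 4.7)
The plan is to establish the two matching bounds separately, with the upper bound following cleanly from Theorem \ref{main} and the lower bound requiring an explicit wave-packet construction.

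For the upper bound, I would note that $c_{m,\rho}(D)\in Op\mathcal{S}_{\rho,0}^{m}\subset Op\mathcal{S}_{\rho,\rho}^{m}$ with the critical value $m=-d(1-\rho)(1/p-1/2)$ falls under case (3) of Theorem \ref{main} with $q=t=p$, and is therefore bounded on $F_p^{0,p}$. Given $f\in\mathcal{E}(R)$ with $R\sim 2^K$, the Littlewood--Paley decomposition has only $\approx K$ nonzero pieces $\phi_k\ast f$, since $\widehat{\phi_k}$ lives on $\{|\xi|\sim 2^k\}$. Applying H\"older pointwise to the $\ell^q$ square function costs a factor $K^{1/q-1/p}$ when $q\le p$ and the sequence has length $K$. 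Combining this with the $F_p^{0,p}$-boundedness and the reverse embedding $\Vert f\Vert_{F_p^{0,p}}\le\Vert f\Vert_{F_p^{0,q}}$ for $q\le p$, I obtain
\begin{equation*}
\Vert c_{m,\rho}(D)f\Vert_{F_p^{0,q}}\le K^{1/q-1/p}\Vert c_{m,\rho}(D)f\Vert_{F_p^{0,p}}\lesssim K^{1/q-1/p}\Vert f\Vert_{F_p^{0,q}}\sim(\log R)^{1/q-1/p}\Vert f\Vert_{F_p^{0,q}}.
\end{equation*}

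For the lower bound I would build saturating examples by superposing dispersive wave packets at $K\sim\log R$ distinct dyadic scales. Let $\psi\in\mathcal{S}$ have $\widehat{\psi}$ supported in $\{1/2\le|\xi|\le 2\}$ and localized around a fixed direction $\omega$, and set $\psi_k(x)=2^{kd/p}\psi(2^k x)$ so each $\psi_k$ has frequency band $|\xi|\sim 2^k$ and unit $L^p$ norm. Define $f_R=\sum_{k=k_0}^{K}\epsilon_k T_{y_k}\psi_k$ with signs $\epsilon_k=\pm 1$ and translations $y_k$ chosen so that the packets are pairwise almost disjoint in space; this makes $\Vert f_R\Vert_{F_p^{0,q}}\sim K^{1/p}$ independently of $q$. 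The key analytic step is a stationary-phase analysis of $c_{m,\rho}(D)(T_{y_k}\psi_k)$: the kernel phase $\langle x-y,\xi\rangle-|\xi|^{1-\rho}$ has stationary point at $x-y=(1-\rho)|\xi|^{-\rho}\xi/|\xi|$, so a wave packet at frequency $2^k\omega$ disperses into a tube of length $\sim 2^{-k\rho}$ and transverse width $\sim 2^{-k}$, translated by a vector of size $\sim 2^{-k\rho}$ predictable in terms of $k$ and $\omega$. At the critical $m$, the symbolic factor $2^{km}$ cancels exactly the amplitude loss from the spreading, so the dispersed packet has amplitude of order one. Choosing the $y_k$ so that all $K$ post-dispersion tubes meet in a common set $E$ whose measure is bounded below uniformly in $K$, each summand contributes unit modulus to the $\ell^q$ sum on $E$, giving
\begin{equation*}
\Vert c_{m,\rho}(D)f_R\Vert_{F_p^{0,q}}^p\gtrsim |E|\cdot K^{p/q},
\end{equation*}
so that $\Vert c_{m,\rho}(D)f_R\Vert_{F_p^{0,q}}/\Vert f_R\Vert_{F_p^{0,q}}\gtrsim K^{1/q-1/p}\sim(\log R)^{1/q-1/p}$, matching the upper bound.

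The hard part will be making the single-packet dispersive estimate rigorous with the right uniformity: one must compute the stationary-phase asymptotic for the kernel of $c_{m,\rho}(D)\phi_k$ on $|\xi|\sim 2^k$ to sufficient precision, tracking the tube's location, shape, and amplitude uniformly in $k$, and also bounding the remainders outside the tube in a summable way. A second subtlety is potential destructive interference between the dispersed packets on $E$; I would handle this by retaining the random signs $\epsilon_k$ and invoking Khintchine's inequality to reduce overlap analysis to a square-function estimate, then fixing a favorable realization of the signs. The remaining geometric choice of the $y_k$--separating them enough to keep the source packets disjoint while aligning them so the dispersed tubes all pass through a common region $E$--is straightforward combinatorial book-keeping once the dispersive estimate is in hand.
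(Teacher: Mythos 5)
Your upper bound follows the same route the paper indicates: bound the $F_p^{0,q}$ norm by $(\log R)^{1/q-1/p}$ times the $F_p^{0,p}$ norm via H\"older on the $\approx\log R$ nonvanishing Littlewood--Paley pieces, then invoke the $F_p^{0,p}$-boundedness of $c_{m,\rho}(D)$ and the embedding $\Vert\cdot\Vert_{F_p^{0,p}}\le\Vert\cdot\Vert_{F_p^{0,q}}$. That part is fine.

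Your lower bound, however, contains a quantitative gap that cannot be repaired within the one-packet-per-scale framework. With $\psi_k(x)=2^{kd/p}\psi(2^kx)$ you have $\Vert\psi_k\Vert_{L^p}\sim 1$, and by stationary phase $c_{m,\rho}(D)\psi_k$ concentrates on a region $D_k$ where $|x|\sim 2^{-k\rho}$, of measure $|D_k|\sim 2^{-kd\rho}$; since $\Vert c_{m,\rho}(D)\psi_k\Vert_{L^p}\sim 1$ at the critical $m$, the amplitude on $D_k$ is $\sim|D_k|^{-1/p}\sim 2^{kd\rho/p}$, \emph{not} of order one as you assert. Because $|D_k|\to 0$, any common set $E$ contained in all $K$ translates $T_{y_k}D_k$ has $|E|\lesssim 2^{-Kd\rho}$, so your claim that $|E|$ can be bounded below uniformly in $K$ is false. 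Carrying out your estimate with the correct amplitude $2^{kd\rho/p}$ and the correct measure bound shows that for any choice of $y_k$ and $\epsilon_k$ the family gives only $\Vert c_{m,\rho}(D)f_R\Vert_{F_p^{0,q}}\lesssim K^{1/p}$: each dispersed packet contributes $O(1)$ to the $p$th power of the norm whether or not it overlaps the others, and the overlap region is too small to let the $\ell^q$ sum beat the $\ell^p$ sum.

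The idea you are missing, which is the substance of the Christ--Seeger argument sketched in the paper after Theorem \ref{Se}, is a density calibration: at each scale $k$ one places a copy of $\widetilde\eta(2^k(\cdot-c_Q))$ at \emph{every} dyadic cube $Q$ of sidelength $2^{-k}$ in $[0,1]^d$, weighted by independent Bernoulli variables $\theta_Q$ of success probability $2^{-kd(1-\rho)}$ and normalized by $2^{kd(1-\rho)/p}$. This is tuned so that the expected number of active $Q$'s with $|x-c_Q|\sim 2^{-k\rho}$ is $\sim 1$ for each fixed $x\in[0,1]^d$; hence, after dispersal, the $k$th layer $S_kf^{k,w}$ has magnitude $\sim 1$ on a constant fraction of $[0,1]^d$ \emph{uniformly in $k$}, while the sources within a scale remain essentially disjoint so that the input norm is only $\sim L^{1/p}$. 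That is what lets all $L$ scales stack pointwise and produce $L^{1/t}$. Random signs and Khintchine applied to a single packet per scale are not a substitute: the obstruction here is lack of overlap, not destructive interference.
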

The case $"\lesssim"$ is immediate by H\"older's inequality and the embedding $l^q\subset l^p$.
They use a randomization technique to show the existence of $f\in \mathcal{E}(R)$ such that 
\begin{equation*}
\Vert f\Vert_{F_p^{0,q}}\lesssim 1
\end{equation*} and 
\begin{equation*}
\Vert c_{m,\rho}(D)f \Vert_{F_p^{0,q}}\gtrsim (\log{R})^{1/q-1/p},
\end{equation*}
and this idea can be applied to our cases.
For each $k\in \mathbb{Z}_+=\{1,2,\dots\}$ let $\mathcal{Q}(k)$ be the set of all dyadic cubes of side length $2^{-k}$ in $[0,1]^d$ and $\mathcal{Q}=\bigcup_{k\in \mathbb{Z}_+}{\mathcal{Q}(k)}$. Let $\Omega$ be a probability space with probability measure $\mu$. Let $\{\theta_{Q}\}$ be a family of independent random variables indexed by $Q\in\mathcal{Q}$, each of which takes the value $1$ with probability $2^{-kd(1-\rho)}$ and the value $0$ with probability $1-2^{-kd(1-\rho)}$ for $Q\in\mathcal{Q}(k)$. Let $\eta$ be in $\mathcal{E}(1)$ such that $\widehat{\eta}$ vanishes identically in a neighborhood of the origin and $\widehat{\eta}(\xi)=1$ if $2^{-1/2}\leq |\xi|\leq 2^{1/2}$ and let $\widetilde{\eta}$ be in $\mathcal{E}(1)$ whose Fourier transform equals $1$ on the support of $\widehat{\eta}$.
Define 
for $k\geq 1$ the operator $S_k$ by 
\begin{equation}\label{operator}
\widehat{S_kf}(\xi):=2^{mk}e^{2\pi i|\xi|^{1-\rho}}\widehat{\eta}(2^{-k}\xi)\widehat{f}(\xi)
\end{equation} and for $w\in\Omega$
\begin{equation*}
f^{k,w}(x):=2^{kd(1-\rho)/p}\sum_{Q\in\mathcal{Q}(k)}{\theta_{Q}(w)\widetilde{\eta}(2^k(x-c_Q))}.
\end{equation*}
According to \cite{Ch_Se}, 
\begin{equation*}
\Big( \int_{\Omega}{\Big\Vert  \Big( \sum_{k=1}^{L}{|f^{k,w}|^q}  \Big)^{1/q}   \Big\Vert_{L^p}^p}d\mu(w)  \Big)^{1/p} \lesssim L^{1/p}
\end{equation*} for any $0<q\leq \infty$, and 
\begin{equation*}
\Big( \int_{\Omega}{\Big\Vert  \big( \sum_{k=1}^{L}{|S_kf^{k,w}|^t}  \big)^{1/t}   \Big\Vert_{L^p}^p}d\mu(w)  \Big)^{1/p} \gtrsim L^{1/t}
\end{equation*} for $0<t<p\leq 2$.
This implies that 
\begin{equation}\label{newco}
\sup{\{\big\Vert  C_{m,\rho}(D)f   \big\Vert_{F_{p}^{0,t}}:\Vert   f \Vert_{F_{p}^{0,q}}\leq 1,f\in\mathcal{E}(R)\}} \gtrsim (\log{R})^{{1}/{t}-{1}/{p}}.
\end{equation} for $0<t<p$ and $0<q\leq \infty$, which proves (2).

\subsubsection{Proof of Theorem \ref{sharptheorem1} (3) }
Suppose $2\leq p<\infty$ and $p<q$.
Let $p<r<\infty$.
Note that the adjoint operator $\big(c_{m,\rho}(D)\big)^*=\overline{c_{m,\rho}}(D)$ has the same estimate like (\ref{newco}).
Thus for sufficiently large $R>0$ there exists $g\in\mathcal{E}(R)$ such that 
\begin{equation*}
\Vert g\Vert_{F_{p'}^{0,1}}\leq1
\end{equation*} and 
\begin{equation*}
\big\Vert \overline{c_{m,\rho}}(D)g \big\Vert_{F_{p'}^{0,r'}}\gtrsim (\log{R})^{1/r'-1/p'}.
\end{equation*}
Since for some constant $A>0$ 
\begin{align*}
\big\Vert \overline{c_{m,\rho}}(D)g\big\Vert_{F_{p'}^{0,r'}}&= \big\Vert    \big\{ \overline{c_{m,\rho}}(D)(g\ast \phi_k) \big\}\big\Vert_{L^{p'}(l^{r'})}\\
      &= \sup_{\Vert \{f_k\} \Vert_{L^p(l^r)}\leq 1}{\Big| \int_{\mathbb{R}^d}{\sum_{k=0}^{A\log{R}}{ \overline{c_{m,\rho}}(D)(g\ast \phi_k)(x) f_k(x)}}dx  \Big|}\\
      &= \sup_{\Vert \{f_k\} \Vert_{L^p(l^r)}\leq 1}{\Big| \int_{\mathbb{R}^d}{\sum_{k=0}^{A\log{R}}{ g\ast \phi_k(x) c_{m,\rho}(D)(\widetilde{\phi}_k\ast f_k)(x)}}dx  \Big|}\\
      &\leq  \sup_{\Vert \{f_k\} \Vert_{L^p(l^r)}\leq 1}{\Big\Vert  \sup_{0\leq k\leq A\log{R}}{\big| c_{m,\rho}(D)(\widetilde{\phi}_k\ast f_k) \big|}  \Big\Vert_{L^p}},
\end{align*}  there exists a sequence of functions $\{f_k\}$ in $L^p(l^r)$ such that
\begin{equation}\label{9}
 \Vert  \{f_k\} \Vert_{L^p(l^r)}\leq 1
\end{equation} and
\begin{equation}\label{10}
\Big\Vert  \sup_{0\leq k\leq  A\log{R}}{\big| c_{m,\rho}(D)(\widetilde{\phi}_k\ast f_k) \big|}  \Big\Vert_{L^p}\gtrsim (\log{R})^{1/p-1/r}.\end{equation}
Define 
\begin{equation*}
f(x):=\sum_{k=0}^{A\log{R}}{\widetilde{\phi}_k\ast f_k(x)}
\end{equation*} and  then clearly $f\in S'$ and 
\begin{equation*}
\Vert f\Vert_{F_p^{0,\infty}}\leq \Vert f\Vert_{F_p^{0,r}}\lesssim  1  
\end{equation*} by (\ref{newdual}) and (\ref{9}). Moreover, since
$\Vert c_{m,\rho}(D)f \Vert_{F_{p}^{0,\infty}}$ is comparable to the left hand side of (\ref{10}),
we see that \begin{equation*}
\big\Vert c_{m,\rho}(D)f\big\Vert_{F_p^{0,\infty}}\gtrsim (\log{R})^{1/p-1/r}.
\end{equation*}
We conclude that for $2\leq p <q\leq \infty$ and $0<t\leq \infty$
\begin{equation}\label{newcoo}
\sup{\{\big\Vert  c_{m,\rho}(D)f   \big\Vert_{F_{p}^{0,t}}:\Vert   f \Vert_{F_{p}^{0,q}}\leq 1,f\in\mathcal{E}(R^A)\}} \gtrsim_{\epsilon} (\log{R})^{\epsilon}
\end{equation} with $0<\epsilon<1/p-1/q$ ( In fact, if $p<q<\infty$ then we can put $q=r$ and $\epsilon=1/p-1/q$ ).
This completes the proof of (3).

\subsection{Proof of Theorem \ref{sharptheorem2}; The case $0<p<\infty$}
\subsubsection{Proof of Theorem \ref{sharptheorem2} (1)}
We follow the same idea in the proof of Theorem \ref{sharptheorem1} (1) and apply $B_p^{0,t}\hookrightarrow F_p^{-\epsilon,p}$ and $F_p^{\epsilon,p} \hookrightarrow B_p^{0,q}$, instead of $F_p^{\epsilon,2}\hookrightarrow F_p^{0,q}$ and $F_p^{0,t}\hookrightarrow F_p^{-\epsilon,2}$.

\subsubsection{Proof of Theorem \ref{sharptheorem2} (2)}
Assume $m=-d(1-\rho)\big| 1/p-1/2  \big|$ and $q>t$.
For sufficiently large $R>0$ we will construct $h\in\mathcal{E}(R)$ so that
\begin{equation*}
\Vert h\Vert_{B_p^{0,q}}\lesssim 1 \quad \text{uniformly in}~R,
\end{equation*} and
\begin{equation*}
\big\Vert c_{m,\rho}(D)h\big\Vert_{B_p^{0,t}}\gtrsim C_R
\end{equation*} where $C_R$ blows up to infinity as $R$ increases.

We first assume $0<p\leq 2$. 
Let $S_k$ be defined as (\ref{operator}) and  $h_k(x)=k^{-1/t}2^{kd/p}\widetilde{\eta}(2^kx)$.
Since $\Vert h_k \Vert_{L^p}\approx k^{-1/t}$ it is clear that
\begin{equation}\label{fest}
\Big(  \sum_{k=10}^{L}{\Vert h_k \Vert_{L^p}^q} \Big)^{1/q}  \lesssim 1 \quad \text{uniformly in } ~L.
\end{equation} 
Now our claim is
\begin{equation}\label{sest}
\Big(  \sum_{k=10}^{L}{\Vert S_kh_k \Vert_{L^p}^t} \Big)^{1/t} \gtrsim ( \log{L} )^{1/t}.
\end{equation} 
When $K_k$ is the convolution kernel of $S_k$,
 \begin{equation*}
 S_kh_k(x)= k^{-1/t}2^{-kd(1-1/p)}2^{km}K_k(x). 
 \end{equation*} because the Fourier transform of $\widetilde{\eta}$ is $1$ on the support of $\widehat{\eta}$. 
By the method of stationary phase as in \cite{Ch_Se}, for a  suitable $\epsilon_1>0$ there is the uniform estimate for large $k$
\begin{equation*}
|K_k(x)|\geq 2^{kd(1+\rho)/2} \quad \text{if}~(1-\epsilon_1)2^{-k\rho}\leq |x| \leq (1+\epsilon_1)2^{-k\rho}.
\end{equation*}  This gives
\begin{equation*}
\big\Vert S_kh_k \big\Vert_{L^p}\gtrsim k^{-1/t},
\end{equation*} which implies (\ref{sest}).

For $2\leq p <\infty$ we can choose a sequence of functions $\{f_k\}$ whose Fourier transform has a compact support in $\{\xi : |\xi|\approx 2^k\}$ such that \begin{equation*}
\Vert S_k(f_k)\Vert_{L^p}\gtrsim k^{-1/t} \gtrsim \Vert f_k\Vert_{L^p}
\end{equation*} by using the duality property  $(L^{p})^*=L^{p'}$. Then (\ref{fest}) and (\ref{sest}) hold with $f_k$ instead of $h_k$ for $q>t$.

\subsection{Proof of Theorem \ref{sharptheorem2}; The case $p=\infty$}

Let $e_1=(1,0,\dots,0)\in\mathbb{R}^d$.
For each $k=10,11,12,\dots$ we define a Schwartz function $g_k$ to satisfy \begin{equation*}
\widehat{g_k}(\xi)=\sigma_k 2^{-\rho kd}\widehat{\phi}(2^{2-\rho k}(\xi-2^ke_1))
\end{equation*} for some positive numbers $\sigma_k$ to be chosen later and define $S_k$ as in (\ref{operator}) again.
Then \begin{equation*}
\widehat{S_kg_k}(\xi)=\sigma_k 2^{mk}2^{-\rho kd}e^{2\pi i|\xi|^{1-\rho}}\widehat{\phi}(2^{2-\rho k}(\xi-x^{k})).
\end{equation*} Let $U_k$ be the Fourier transform of $e^{2\pi i|\cdot |^{1-\rho}}\widehat{\phi}(2^{2-\rho k}(\cdot-x^{k}))$ and then
a stationary phase calculation yields that for a suitable $\epsilon>0$ 
\begin{equation*}
\big| U_k(x)   \big|\gtrsim 2^{kd(1+\rho)/2} 
\end{equation*} if  $(1-\epsilon_0)2^{\rho k} \leq \big| x/|x|^{1+1/\rho}-2^ke_1  \big|\leq (1+\epsilon_0)2^{\rho k}$ for large $k$.
Thus, we have \begin{equation*}
\big\Vert S_kg_k\big\Vert_{L^{\infty}}\gtrsim \sigma_k2^{mk}2^{kd(1-\rho)/2} 
\end{equation*}
and
\begin{equation*}
\Vert g_k\Vert_{L^{\infty}}\approx \sigma_k.
\end{equation*}

When $m>-d(1-\rho)/2$ then put $\sigma_k=2^{-kd(m+d(1-\rho)/2)/2}$ to get
\begin{equation*}
\Big(\sum_{k=10}^{\infty}{\Vert g_k\Vert^q_{L^{\infty}}}\Big)^{1/q}\lesssim 1
\end{equation*} and 
\begin{equation*}
\Big( \sum_{k=10}^{\infty}{\big\Vert  S_kg_k \big\Vert^t_{L^{\infty}}}\Big)^{1/t} =\infty
\end{equation*} for all $0<q\leq\infty$ and $0<t\leq\infty.$ This proves Theorem \ref{sharptheorem2} (1).

When $m=-d(1-\rho)/2$ and $q>t$ then we put $\sigma_k=k^{-1/t}$, which proves Theorem \ref{sharptheorem2} (2).

\subsection{Proof of Theorem \ref{sharptheorem1}; The case $p=\infty$}
Now suppose $m>-d(1-\rho)/2$ and prove Theorem \ref{sharptheorem1} (1).
We pick $\epsilon>0$ such that $m-2\epsilon>-d(1-\rho)/2$ and proceed the argument in subsection \ref{ghghghgh}.
Due to Theorem \ref{sharptheorem2} (1) we can choose $g\in B_{\infty}^{\epsilon,\infty}$ so that
$\Vert g\Vert_{B_{\infty}^{\epsilon,\infty}}<\infty$ and $\big\Vert c_{m,\rho}(D)g \big\Vert_{B_{\infty}^{-\epsilon,\infty}}=\infty$.
Then we apply
 $B_{\infty}^{\epsilon,\infty}\hookrightarrow F_{\infty}^{0,q} $ and $F_{\infty}^{0,t} \hookrightarrow B_{\infty}^{-\epsilon,\infty}$.

\section{Additional remarks}\label{type11}
It is natural to ask about the boundedness of $T_a\in Op\mathcal{S}_{1,1}^{m}$ in $F_p^{s,q}$ and in this case it has many different situations. 
We cannot guarantee that the adjoint operator of $T_a\in Op\mathcal{S}_{1,1}^{m}$ belongs to the same type,
 and therefore the extension of $T_a\in Op\mathcal{S}_{1,1}^{m}$ to operators acting on $S'$ is not valid.   Moreover, the composition properties of pseudo-differential operators in Section \ref{property} does not work because the symbolic calculus cannot be applied to the case $\rho=\delta=1$ and thus we do not have the freedom of dependence of $s$.
Actually, Ching \cite{Chi} proves that not all operators of order $m=0$ are $L^2(=L^2_0)$ continuous and 
Stein proved that all operators in $Op\mathcal{S}_{1,1}^{0}$ are bounded on $H^{{s}}(=L_{{s}}^2)$ under the assumption $s>0$ in his unpublished work and Meyer \cite{Me} improved this result  by proving the continuity of $Op\mathcal{S}_{1,1}^{m}$-operators from ${L}^p_{{s}+m}$ to ${L}^p_{{s}}$ with the same assumption ${s}>0$ for $1<p<\infty$.        
For $0<p<\infty$ and $0<q\leq\infty$ Runst \cite{Ru}, Torres \cite{To}, Johnsen \cite{Joh} extended the continuity to Triebel-Lizorkin spaces $F_p^{s,q}$ with the condition $s>\max{\{0,d(1/p-1),d(1/q-1)\}}$.  Let ${\tau}_{p,q}=\max{\{0,d({1}/{p}-1),d({1}/{q}-1)\}}$ and ${\tau}_{p}=\max{\{0,d({1}/{p}-1)\}}$. 
\begin{theorem}\label{their}
Let $m\in\mathbb{R}$, $0<p<\infty$, and $0<q\leq \infty$. Suppose $a\in\mathcal{S}_{1,1}^{m}$.
\begin{enumerate}
\item  $T_{a}$ maps $F_{p}^{s+m,q}$ to $F_{p}^{s,q}$ if  $s>{\tau}_{p,q}$ ;
\item $T_{a}$ maps $B_{p}^{s+m,q}$ to $B_{p}^{s,q}$  if  $s>{\tau}_{p}$.
\end{enumerate}
\end{theorem}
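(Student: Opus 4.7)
The plan is to adapt the paradifferential decomposition from Section \ref{basic}: write $a = a^{(1)} + a^{(2)} + a^{(3)}$ and let $T^{(i)}$ denote the corresponding operators. Because for $\rho=\delta=1$ the composition property of Section \ref{property} (and with it the reduction $s_1=s_2=0$) is unavailable, the threshold $s>\tau_{p,q}$ (respectively $s>\tau_p$) must emerge directly from the individual estimates rather than be reduced away.

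For $T^{(1)}$ and $T^{(3)}$, the arguments from the proofs of (\ref{12}) and Section 3.5 carry over essentially unchanged. Both operators produce outputs that are Fourier-localized in annuli at scales $2^j$ and $2^k$ respectively, so Lemma \ref{Yaineq} applies. For $T^{(1)}$, an $N$-fold Taylor expansion combined with the cancellation of $\phi_j$ gives the symbol bound $\|a_{j,k}(x,2^k\cdot)\|_{W^{c^{(r,d)},1}} \lesssim 2^{km}2^{(k-j)N}$ (now with $1-\rho=0$ killing the unwanted $c^{(r,d)}$ factor), and Lemma \ref{Maineq} turns this into $|T_{a_{j,k}}(\widetilde{\phi}_k\ast f)(x)| \lesssim 2^{km}2^{(k-j)N}\mathcal{M}_r(\widetilde{\phi}_k\ast f)(x)$. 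Choosing $N$ large and $r<\min\{1,p,q\}$, the combination of discrete Young's inequality and (\ref{max}) yields $\|T^{(1)}f\|_{F_p^{s,q}} \lesssim \|f\|_{F_p^{s+m,q}}$ for every $s\in\mathbb{R}$. An analogous, simpler argument using the uniform $\mathcal{S}_{1,1}^{m}$ bounds on $b_k$ gives $|T_{b_k}(\widetilde{\phi}_k\ast f)(x)| \lesssim 2^{km}\mathcal{M}_r(\widetilde{\phi}_k\ast f)(x)$ and hence the same conclusion for $T^{(3)}$.

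The main obstacle is $T^{(2)}=\sum_k T_{a_k}$. When $\rho=1$, the $x$- and $\xi$-Fourier supports of $a_k$ are both at scale $2^k$, so $T_{a_k}f$ is supported in $\{|\eta|\lesssim 2^{k+4}\}$ but can spread all the way down to low frequencies, and the kernel size bound (\ref{sizeest}) is no longer available. For each Littlewood--Paley piece one has $\phi_l\ast T^{(2)}f=\sum_{k\ge l-4}\phi_l\ast T_{a_k}f$, and starting from the identity
\[
\widehat{\phi_l\ast T_{a_k}f}(\eta)=\widehat{\phi_l}(\eta)\int \widehat{a_k}_{x}(\eta-\xi,\xi)\,\widehat{f}(\xi)\,d\xi,
\]
a direct frequency-space analysis must produce a pointwise decay of the form $|\phi_l\ast T_{a_k}(\widetilde{\phi}_k\ast f)(x)|\lesssim 2^{km}2^{-(k-l)M}\mathcal{M}_r(\widetilde{\phi}_k\ast f)(x)$ for any $M>0$ and $l\le k$. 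Establishing this decay is the hardest step, since the adjoint of $T_a\in Op\mathcal{S}_{1,1}^{m}$ need not lie in the same class, so the duality shortcut from Section \ref{basic} is unavailable and one must work directly with the oscillatory integral. Once such an estimate is in hand, summation in $k$ followed by (\ref{max}) closes the argument: the requirement $r<\min\{1,p,q\}$ for the Peetre--Fefferman--Stein inequality translates into the sharp threshold $s>\tau_{p,q}$ for the $F$-space version, while the coarser scheme for Besov norms requires only $r<\min\{1,p\}$ and therefore yields the weaker condition $s>\tau_p$.
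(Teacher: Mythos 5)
The paper itself does not prove Theorem \ref{their}; it cites Runst, Torres, and Johnsen and briefly sketches Torres's atoms-and-molecules route. Your proposal instead adapts the paradifferential decomposition of Section \ref{basic}, which is essentially Johnsen's approach, so comparing against that: the decomposition $a=a^{(1)}+a^{(2)}+a^{(3)}$ is the right starting point, and your treatments of $T^{(1)}$ and $T^{(3)}$ are sound. For both pieces the output of the $k$-th (resp. $j$-th) block is frequency-localized in an annulus, so Lemma \ref{Yaineq} applies with no restriction on $s$, and Marschall's inequality together with the Fefferman--Stein bound indeed gives boundedness $F_p^{s+m,q}\to F_p^{s,q}$ for \emph{all} $s$, as you claim.

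The gap is in $T^{(2)}$, and it is not a technical detail but a conceptual one. The pointwise decay you postulate,
\[
|\phi_l\ast T_{a_k}(\widetilde{\phi}_k\ast f)(x)|\lesssim 2^{km}\,2^{-(k-l)M}\,\mathcal{M}_r(\widetilde{\phi}_k\ast f)(x)\qquad\text{for every }M>0,\ l\le k,
\]
cannot hold. If it did, the sum over $k\ge l-4$ would converge for \emph{every} $s$, and $T^{(2)}$ --- hence $T_a$ --- would be bounded on $F_p^{s,q}$ with no lower bound on $s$; this contradicts Ching's counterexample \cite{Chi} (and the sharpness result quoted from \cite{Park4}), which show that for $s\le\tau_{p,q}$ boundedness genuinely fails. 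The balanced symbol $a_k$ has $x$-frequency and $\xi$-frequency both concentrated at scale $2^k$, so $\Phi_k\ast a(\cdot,\xi)$ is of size $\approx 2^{km}$ with no cancellation against $\phi_l$ for $l<k$; the best one can say pointwise is
\[
|T_{a_k}(\widetilde{\phi}_k\ast f)(x)|\lesssim 2^{km}\,\mathcal{M}_r(\widetilde{\phi}_k\ast f)(x),
\]
uniformly in $k$, with no gain in $k-l$. The ingredient you are missing is the ball-support variant of Lemma \ref{Yaineq}: if $\widehat{u_k}$ is supported in the \emph{ball} $\{|\xi|\le A2^k\}$ (rather than an annulus), then $\big\Vert\sum_k u_k\big\Vert_{F_p^{s,q}}\lesssim\big\Vert\{2^{sk}u_k\}\big\Vert_{L^p(\ell^q)}$ holds precisely under the hypothesis $s>\tau_{p,q}$ (and the Besov analogue under $s>\tau_p$). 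This lemma, applied to $u_k=T_{a_k}(\widetilde{\phi}_k\ast f)$, together with the uniform pointwise bound above and the maximal inequality (\ref{max}), is what closes the argument. Your statement that the restriction $r<\min\{1,p,q\}$ ``translates into'' $s>\tau_{p,q}$ is an accurate intuition --- in the proof of the ball-support lemma the Nikolskij-type estimate $|\phi_l\ast u_k|\lesssim 2^{(k-l)d(1/r-1)}\mathcal{M}_r u_k$ produces a controlled \emph{growth} in $k-l$, and the geometric series converges only for $s>d(1/r-1)\ge\tau_{p,q}$ --- but that mechanism must be invoked through the ball-support lemma, not through an arbitrary exponential decay that does not exist.
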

(2) was proved in \cite{Joh, Ru} and (1) was done in \cite{Joh, Ru, To}.
Torres \cite{To} applied atoms and molecules for $F_{p}^{s,q}$ to prove the above result. Every $f\in F_{p}^{s,q}$ can be written as $f=\sum_{Q}{s_Q A_Q}$ where $\{s_Q\}_{Q}$ is a sequence of complex numbers in $f_{p}^{s+m,q}$ and $A_Q$'s are atoms for $F_{p}^{s+m,q}$ ( see \cite{Fr_Ja} for more details). Then he defined $\widetilde{T_{a}}f(x):=\sum_{Q}{s_Q T_{a}A_Q}$ and proved that $\widetilde{T_{a}}$ maps $F_{p}^{s+m,q}$ to $F_p^{s,q}$ by showing
$T_{a}$ maps atoms for $F_{p}^{s+m,q}$ to molecules for $F_{p}^{s,q}$. His argument also works for $p=\infty$.
Note that $\widetilde{T_{a}}$ agrees with $T_{a}$ on $F_p^{s,q}$ for $0<p<\infty$ by the density of $S$ in  $F_p^{s,q}$ for $0<p,q<\infty$. However, we should be careful to say that this implies the boundedness of the operator $T_{a}$ when $p=\infty$ because $T_{a}$ is not continuous on $S'$ as we mentioned above. In fact, a rigorous definition of $T_{a}$ on $S'$ was first given in \cite{Joh1} by using a limiting argument; Let $a_{j,k}(x,\xi)=\phi_j\ast a(\cdot ,\xi)(x)\widehat{\phi_k}(\xi)$ be defined as in (\ref{defaa}). Then we define for $f\in S'$
\begin{equation}\label{defdef}
T_{a}f=\lim_{N\to\infty}{\sum_{k=0}^{N}{\sum_{j=0}^{N}{T_{a_{j,k}}f}}}
\end{equation}  whenever  the limit converges in $S'$.

Recently the author \cite{Park4} proved that Theorem \ref{their} is sharp in the sense that if $s\leq \tau_{p,q}$ ($s\leq \tau_p$), then the boundedness results in $F_{p}^{s,q}$ ($B_p^{s,q}$) do not work by applying a random construction technique in \cite{Ch_Se}. The author \cite{Park1} also proved  a $``F_{\infty}^{s,q}$-$variant"$ of Fefferman-Stein maximal inequality, and  in  \cite{Park4}  extend Theorem \ref{their} to $p=\infty$ with the adaption of (\ref{defdef})  by using the maximal inequality.

\section{\textbf{Appendix - Compoud symbols}}\label{adjoint}

A useful device in the study of pseudo-differential operator is the notion of compound symbols, motivated by adjoint operators.
Observe that by writing out the Fourier transform we can rewrite (\ref{symbolcal}) as
\begin{equation}\label{rewrite}
T_af(x)=\int_{\mathbb{R}^d\times\mathbb{R}^d}{a(x,\xi)f(y)e^{2\pi i\langle x-y,\xi\rangle }  }dy d\xi.
\end{equation} Then the adjoint operator $(T_a)^*$ can be expressed as
\begin{equation*}
(T_a)^*f(x)=\int_{\mathbb{R}^d\times\mathbb{R}^d}{\overline{a}(y,\xi)f(y)e^{2\pi i\langle x-y,\xi\rangle }  }dy d\xi
\end{equation*} and this is not quite in the form (\ref{rewrite}) as the amplitude $\overline{a}(y,\xi)$ is not a function of $(x,\xi)$. Alternatively, we introduce compound symbols.
For $\rho,\delta_1,\delta_2, m\in\mathbb{R}$ a compound symbol $A$ in $\mathcal{S}_{\rho,\delta_1,\delta_2}^{m}$ is a smooth function defined on $\mathbb{R}^d\times\mathbb{R}^d\times\mathbb{R}^d$, satisfying the analogue of (\ref{symbolest}):  for all multi-indices $\alpha$, $\beta$, and $\gamma$ there exists a constant $c_{\alpha,\beta,\gamma}$ such that 
\begin{equation*}
\big| \partial_{\xi}^{\alpha}\partial_{x}^{\beta}\partial_{y}^{\gamma}A(x,y,\xi)  \big|\leq c_{\alpha,\beta,\gamma}\big(  1+|\xi|\big)^{m-\rho|\alpha|+\delta_1|\beta|+\delta_2|\gamma|} ~\text{for}~ (x,y,\xi)\in\mathbb{R}^d\times\mathbb{R}^d\times\mathbb{R}^d.
\end{equation*} The pseudo-differential operator $T_{[A]}$ corresponding to a compound symbol $A$ is defined by
\begin{equation*}
T_{[A]}f(x)=\int_{\mathbb{R}^d\times\mathbb{R}^d}{A(x,y,\xi)f(y)e^{2\pi i\langle x-y,\xi\rangle }}dyd\xi,\quad \text{for}~f\in S(\mathbb{R}^d).
\end{equation*}
Denote by $Op\mathcal{S}_{\rho,\delta_1,\delta_2}^{m}$ the class of pseudo-differential operators with compound symbols in $\mathcal{S}_{\rho,\delta_1,\delta_2}^{m}$.
Then the adjoint operator $(T_{[A]})^*$ is also a pseudo-differential operator \begin{equation*}
(T_{[A]})^*=T_{[A^*]}
\end{equation*} where $A^*(x,y,\xi)=\overline{A}(y,x,\xi)$.

\subsection{Composition of pseudo-differential operators}
Let $a\in\mathcal{S}_{\rho_0,\delta_0}^{m_0}$ and $A\in \mathcal{S}_{\rho_1,\delta_1,\delta_2}^{m_1}$.  Then we get 
\begin{equation*}
T_a\circ T_{[A]}f(x)=T_{[B]}f(x)
\end{equation*} where \begin{equation}\label{ccdd}
B(x,y,\xi)=\int_{\mathbb{R}^d\times\mathbb{R}^d}{ a(x,\eta)A(z,y,\xi)e^{2\pi i\langle x-z,\eta-\xi\rangle }  }d\eta dz.
\end{equation} 
Observe that if we put $A_y(x,\xi):=A(x,y,\xi)$ and $B_y(x,\xi):=B(x,y,\xi)$ for fixed $y$ then 
\begin{equation*}
T_{a}\circ T_{A_y}=T_{B_y}
\end{equation*} and $A_y\in\mathcal{S}_{\rho_1,\delta_1}^{m}$ uniformly in $y$. This implies that
$B_y$ belongs to $\mathcal{S}_{{\rho},{\delta} }^{m_0+m_1}$ uniformly in $y$  if
\begin{equation}\label{compcon}
0\leq \delta_1<\rho_0\leq 1
\end{equation} holds where
${\rho}=\min{(\rho_0,\rho_1)}  $ and ${\delta}=\max{(\delta_0,\delta_1)}$. That is,
\begin{equation*}
\big| \partial_{\xi}^{\alpha}\partial_x^{\beta}B(x,y,\xi)\big|\lesssim_{\alpha,\beta}(1+|\xi|)^{m_0+m_1-{\rho}|\alpha|+{\delta}|\beta|}
\end{equation*} uniformly in $y$.
Since $y$ involves only in $A$ on the right hand side of (\ref{ccdd}), we can replace $A$ by $\partial_{y}^{\gamma}A\in \mathcal{S}_{\rho_1,\delta_1,\delta_2}^{m_1+\delta_2|\gamma|}$ to get the estimate
\begin{equation*}
\big| \partial_{\xi}^{\alpha}\partial_x^{\beta}\partial_y^{\gamma}B(x,y,\xi)\big|\lesssim_{\alpha,\beta}(1+|\xi|)^{m_0+m_1-{\rho}|\alpha|+{\delta}|\beta|+\delta_2|\gamma|}.
\end{equation*}
We conclude that if (\ref{compcon}) holds then \begin{equation}\label{comcom}
T_a\circ T_{[A]}=T_{[B]}
\end{equation} where $B\in \mathcal{S}_{{\rho},{\delta},\delta_2}^{m_0+m_1}$.

\subsection{Analogue of Theorem \ref{main}, \ref{besov}, \ref{mmm}, \ref{mmmm}}

We will show  that for $0\leq \delta \leq \rho<1$ every pseudo-differential operator $T_{[A]}\in Op\mathcal{S}_{\rho,\delta,\delta}^{m}$ can be written as $T_{[A]}=T_a$ for some $a\in\mathcal{S}_{\rho,\delta}^{m}$ and consequently  we obtain the boundedness results of $Op\mathcal{S}_{\rho,\rho,\rho}^{m}$ for $0<\rho<1$.

\begin{lemma}\label{lllmmm}
Let $0\leq  \delta \leq \rho<1$ and $m\in\mathbb{R}$. 
Then every pseudo-differential operator $T_{[A]}$ corresponding to $A\in\mathcal{S}_{\rho,\delta,\delta}^{m}$ can be written as $T_{[A]}=T_a$ for some $a\in\mathcal{S}_{\rho,\delta}^{m}$.
\end{lemma}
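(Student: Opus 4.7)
The plan is to realize $T_{[A]}$ as $T_a$ via the classical oscillatory-integral reduction formula
\begin{equation*}
a(x,\xi) = \iint A(x,\,x+y,\,\xi+\eta)\, e^{-2\pi i\langle y,\eta\rangle}\, dy\, d\eta,
\end{equation*}
and then to verify both $a \in \mathcal{S}_{\rho,\delta}^{m}$ and the identity $T_a = T_{[A]}$ on $\mathcal{S}(\mathbb{R}^d)$. To give the display rigorous meaning, I would first fix a cutoff $\chi \in C_c^\infty(\mathbb{R}^d)$ with $\chi(0)=1$ and introduce
\begin{equation*}
a_\epsilon(x,\xi) := \iint \chi(\epsilon y)\chi(\epsilon\eta)\, A(x,x+y,\xi+\eta)\, e^{-2\pi i\langle y,\eta\rangle}\, dy\, d\eta, \qquad \epsilon \in (0,1],
\end{equation*}
which is absolutely convergent. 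The aim is to derive symbol bounds on $\partial_\xi^\alpha\partial_x^\beta a_\epsilon$ uniform in $\epsilon$ and then pass to the limit $\epsilon \downarrow 0$.

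For the uniform symbol bound I would differentiate under the integral sign; by the chain rule, $\partial_x^\beta$ distributes across the first and second slots of $A$, so the integrand becomes a finite sum of $\partial_\xi^\alpha \partial_x^{\beta_1}\partial_y^{\beta_2} A(x,x+y,\xi+\eta)\chi(\epsilon y)\chi(\epsilon\eta)$ with $\beta_1+\beta_2 = \beta$. Next, I would integrate by parts using
\begin{equation*}
(1+|\eta|^2)^{-N}\bigl(1-(2\pi)^{-2}\Delta_y\bigr)^N e^{-2\pi i\langle y,\eta\rangle} = e^{-2\pi i\langle y,\eta\rangle}
\end{equation*}
and its $\eta$-analogue with exponent $M$, transferring $\Delta_y^N$ and $\Delta_\eta^M$ onto the $A$-factor and producing additional $\partial_y^\gamma$, $\partial_\xi^{\alpha'}$ derivatives with $|\gamma| \le 2N$, $|\alpha'| \le 2M$. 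Modulo lower-order terms from differentiating the cutoffs (which are harmless and disappear in the limit), the resulting integrand is pointwise dominated by
\begin{equation*}
\frac{(1+|\xi+\eta|)^{m-\rho(|\alpha|+|\alpha'|)+\delta(|\beta|+|\gamma|)}}{(1+|y|^2)^M(1+|\eta|^2)^N}.
\end{equation*}
Splitting the $\eta$-integration into $|\eta|\le|\xi|/2$ (where $(1+|\xi+\eta|)\sim(1+|\xi|)$, so I take $|\alpha'|$ large and $|\gamma|=0$) and $|\eta|>|\xi|/2$ (where Peetre's inequality $(1+|\xi+\eta|)^t \lesssim (1+|\xi|)^t(1+|\eta|)^{|t|}$ applies), and choosing $M,N$ large enough depending on $\alpha,\beta$, yields the required estimate $|\partial_\xi^\alpha\partial_x^\beta a_\epsilon(x,\xi)| \lesssim (1+|\xi|)^{m-\rho|\alpha|+\delta|\beta|}$ uniformly in $\epsilon$ and $x$.

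With the uniform bound in hand, dominated convergence on the integrated-by-parts form shows that $a_\epsilon \to a$ pointwise together with all derivatives as $\epsilon \downarrow 0$, so $a \in \mathcal{S}_{\rho,\delta}^{m}$. The identity $T_{[A]}f = T_a f$ for $f \in \mathcal{S}(\mathbb{R}^d)$ then follows by writing $T_{a_\epsilon}f$ as a quadruple integral (absolutely convergent thanks to the cutoffs), applying Fubini and Fourier inversion $\int e^{2\pi i\langle x-y,\xi\rangle}\widehat{f}(\xi)\,d\xi = f(y)$ to collapse it to the compound-symbol representation of $T_{[A]}$ applied to a mollified $f$, and letting $\epsilon \downarrow 0$ using the uniform symbol estimates.

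The main obstacle is the sharp symbol estimate: extracting exactly $(1+|\xi|)^{m-\rho|\alpha|+\delta|\beta|}$ (rather than a strictly weaker power) forces a delicate two-region decomposition in $\eta$ and a balanced choice of $M$ tied to $|\alpha|$ and of $N$ tied to $|\beta|$. The hypotheses enter precisely here: $\rho < 1$ (hence $\delta < 1$) ensures each pair of $y$-derivatives provides a genuine net decay $(1+|\eta|)^{-2(1-\delta)}$ on the far region, while $\delta \le \rho$ is what makes the near-region exponent collapse cleanly once one takes $|\alpha'|$ sufficiently large.
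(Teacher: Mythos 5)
There is a genuine gap in the near-region analysis. When you write ``so I take $|\alpha'|$ large and $|\gamma|=0$'' on the region $|\eta|\le|\xi|/2$, you are implicitly discarding the bad terms: the single global integration by parts produces a \emph{sum} over all $\gamma$ with $|\gamma|\le 2N$ and all $\alpha'$ with $|\alpha'|\le 2M$, and every one of those terms carries the same decay weight $(1+|\eta|^2)^{-N}(1+|y|^2)^{-M}$. The term with $|\gamma|=2N$ and $\alpha'=0$ (arising when all of $(1-(2\pi)^{-2}\Delta_y)^N$ lands on $A$ and all of $(1-(2\pi)^{-2}\Delta_\eta)^M$ lands on the weight $(1+|\eta|^2)^{-N}$ rather than on $A$) is bounded only by $(1+|\xi+\eta|)^{m+2\delta N}$, which on the near region is $\approx (1+|\xi|)^{m+2\delta N}$. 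Integrating $(1+|\eta|^2)^{-N}$ over $|\eta|\le|\xi|/2$ gives $O(1)$ for $N>d/2$, so this term contributes $O\bigl((1+|\xi|)^{m+2\delta N}\bigr)$ --- an overshoot of $(1+|\xi|)^{2\delta N}$. Reducing $N$ does not help: for $N<d/2$ the measure of the near region contributes an extra $|\xi|^{d-2N}$, and eliminating the overshoot would require $2N(1-\delta)\ge d$ simultaneously with $N<d/2$, which is impossible once $\delta>0$. So your argument only closes when $\delta=0$, i.e.\ it cannot reach the case $\rho=\delta>0$ that the lemma is really about.

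The paper's proof contains exactly the ingredient your outline is missing: a dyadic decomposition in the frequency slot of $A$ followed by a parabolic rescaling. After reducing to $m=0$, $\alpha=\beta=0$ by composing with Bessel potentials $n_{\pm m}(D)$ and re-indexing the order, the paper first handles $\rho=\delta=0$ by the lattice/integration-by-parts argument (essentially what you describe). For $0<\rho<1$ it sets $A_k(x,y,\xi):=A(x,y,\xi)\widehat{\phi_k}(\xi)$ and observes that the rescaled symbol $\widetilde{A_k}(x,y,\xi):=A_k(2^{-\rho k}x,2^{-\rho k}y,2^{\rho k}\xi)$ belongs to $\mathcal{S}^0_{0,0,0}$ \emph{uniformly} in $k$, precisely because $\delta\le\rho$. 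Each rescaled piece then falls under the $\rho=\delta=0$ case, and the sum over $k$ is controlled by inserting a bump $\Theta\bigl(2^{-k}(\xi-2^{\rho k}\eta)\bigr)$, which supplies geometric decay in $k$ for $k\le\log_2|\xi|$ and bounded overlap for $k>\log_2|\xi|$. Without this dyadic localization and rescaling, the bad near-region term is unavoidable; with it, the term $\gamma=2N$ is harmless because on each dyadic piece the effective symbol class is $\mathcal{S}^0_{0,0,0}$, where your estimate does close.
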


We remark that when $A(x,y,\xi)=\overline{a}(y,\xi)$, Lemma \ref{lllmmm} proves that the adjoint operator of $T_a\in Op\mathcal{S}_{\rho,\delta}^{m}$ also belongs to the same type of class when $0\leq \delta\leq \rho<1$.

\begin{proof}[Proof of Lemma \ref{lllmmm}]
Let $A\in\mathcal{S}_{\rho,\delta,\delta}^{m}$.
Observe that for $f\in S(\mathbb{R}^d)$
\begin{align*}
T_{[A]}f(x)&= \int_{\mathbb{R}^d}{ \widehat{f}(\xi)e^{2\pi i\langle x,\xi\rangle }\Big(  \int_{\mathbb{R}^d\times\mathbb{R}^d}{A(x,y,\eta)  e^{-2\pi i<x-y,\xi-\eta>}  }d\eta dy  \Big)   }d\xi\\
     &= \int_{\mathbb{R}^d}{ \widehat{f}(\xi)e^{2\pi i<x,\xi>}\Big(  \int_{\mathbb{R}^d\times\mathbb{R}^d}{A(x,x-y,\xi-\eta)  e^{-2\pi i\langle y,\eta\rangle }  }d\eta dy  \Big)   }d\xi
\end{align*} and our claim is that
\begin{equation*}
a(x,\xi):=   \int_{\mathbb{R}^d\times\mathbb{R}^d}{A(x,x-y,\xi-\eta)  e^{-2\pi i\langle y,\eta\rangle }  }d\eta dy 
\end{equation*} belongs to $\mathcal{S}_{\rho,\delta}^{m}$. Here the integral is  an oscillatory integral and thus it can be interpreted as
\begin{equation*}
\lim_{\epsilon\to 0^+}  \int_{\mathbb{R}^d\times\mathbb{R}^d}{A(x,x-y,\xi-\eta)\mathcal{W}\big(\epsilon y   ,\epsilon \eta\big)  e^{-2\pi i\langle y,\eta\rangle }  }d\eta dy 
\end{equation*} where $\mathcal{W}\in\mathcal{C}_0^{\infty}(\mathbb{R}^d\times\mathbb{R}^d)$ is a test function which is identically $1$ on a neighborhood of the origin, and the limit is taken in the sense of tempered distributions.

We need to show that for any multi-indices $\alpha$ and $\beta$ there exists a constant $C_{\alpha,\beta}>0$ such that \begin{equation*}
\big| \partial_{\xi}^{\alpha}\partial_x^{\beta}a(x,\xi)  \big|\leq C_{\alpha,\beta}(1+|\xi|)^{m-\rho|\alpha|+\delta |\beta|}
\end{equation*} and we shall prove the case $\alpha=\beta=0$, otherwise apply the proof by replacing $m$ by $m-\rho |\alpha|+\delta|\beta|$. Furthermore, we may assume $m=0$ in the following reason.
As we did in Section \ref{property}, when $n_{-m}(\xi)=\big(  1+|\xi|^2 \big)^{-m/2}$ we have 
\begin{equation*}
n_{-m}(D)T_{[A]}\in Op\mathcal{S}_{\rho,\delta,\delta}^{0}.\end{equation*} by (\ref{comcom}). Once we prove 
$n_{-m}(D)T_{[A]}=T_a$ for some $a\in\mathcal{S}_{\rho,\delta}^{0}$, then 
\begin{equation*}
T_{[A]}=n_{m}(D)n_{-m}(D)T_{[A]}=n_{m}(D)T_a\in Op\mathcal{S}_{\rho,\delta}^{m}
\end{equation*} by (\ref{comcomcom}).
Thus assume $m=0$ and then it suffices to show
\begin{equation}\label{appgoal}
\big|a(x,\xi)\big|\lesssim 1.
\end{equation}
For $x\in \mathbb{R}^d$ and $R>0$ denote by $B(x,R)$ a ball of radius $R$ centered at $x$.

We first prove the case $\delta=\rho=0$. 
There exist a constant $C_d>0$ and a test function ${\chi}$ such that  $\chi$ is supported in $B(0,C_d)$ and  satisfies 
\begin{equation*}
\sum_{l\in \mathbb{Z}^d}{{\chi}(y-l)}=1
\end{equation*} for all $y \in\mathbb{R}^d$.
By using this we write
\begin{align}\label{fivedecomp}
a(x,\xi)&= \sum_{l,j\in\mathbb{Z}^d}{\int_{\mathbb{R}^d\times\mathbb{R}^d}{ A(x,x-y,\xi-\eta){\chi}(y-l){\chi}(\eta-j)e^{-2\pi i\langle y,\eta\rangle }  }d\eta dy}\nonumber\\
    &= \sum_{|l|,|j|\leq 10 C_d}{\dots}+\sum_{|l|\leq 10 C_d<|j|}{\dots}+\sum_{|j|\leq 10C_d<|l|}{\dots}+\sum_{10C_d<|l|\leq |j|}{\dots}+\sum_{10C_d<|j|<|l|}{\dots}\nonumber\\
    &:= I+II+III+IV+V.
\end{align}
Observe that 
for each $l,j\in\mathbb{Z}^d$ the integral does not exceed a constant times
\begin{equation}\label{integralbound}
\begin{cases}
1, \quad &
\quad \text{uniformly in}~l,j 
\\
 |j|^{-M}, \quad &
\quad  \text{for}~ |j|>10C_d 
\\
 |l|^{-M}, \quad &
\quad  \text{for}~ |l|>10C_d.
\end{cases}
\end{equation} for any $M>0$.
The first bound is immediate and the others follow from an integration by parts in each variable. Of course an implicit constant for the last two bounds depends on $M$. 
We use the bound $1$ for $I$, $|j|^{-M}$ for $II,IV$, and $|l|^{-M}$ for $III,V$ with $M>2d$ and then we establish
(\ref{appgoal}).

Now assume $0<\rho<1$. Define $A_{k}(x,y,\xi):=A(x,y,\xi)\widehat{\phi_k}(\xi)$ where $\phi_k$ is an auxiliary function we used to define $F$- and $B$-norms  previously. Then
\begin{equation*}
\widetilde{A_k}(x,y,\xi):=A_k(2^{-\rho k}x,2^{-\rho k}y, 2^{\rho k}\xi)
\end{equation*} belongs to $\mathcal{S}_{0,0,0}^{0}$ uniformly in $k$ because $\delta\leq\rho$.
Since $\{\phi_k\}$ forms a partition of unity, we write
\begin{align*}
a(x,\xi)&= \sum_{k=0}^{\infty}{\int_{\mathbb{R}^d\times\mathbb{R}^d}{ A_{k}(x,x-y,\xi-\eta)e^{-2\pi i\langle y,\eta\rangle } }d\eta dy}\\
    &= \sum_{k=0}^{\infty}{\int_{\mathbb{R}^d\times\mathbb{R}^d}{ \widetilde{A_{k}}\big(2^{\rho k}x,2^{\rho k}(x-y),2^{-\rho k}(\xi-\eta)\big)e^{-2\pi i\langle y,\eta\rangle } }d\eta dy}\\
     &= \sum_{k=0}^{\infty}{\int_{\mathbb{R}^d\times\mathbb{R}^d}{ \widetilde{A_{k}}\big(2^{\rho k}x, 2^{\rho k}x-y,2^{-\rho k}\xi-\eta\big)e^{-2\pi i\langle y,\eta\rangle } }d\eta dy}.
\end{align*}

First consider the case $|\xi|\geq 20C_d$ and choose an integer $k_0$ satisfying \begin{equation*}
2^{k_0}\leq |\xi|<2^{k_0+1}.
\end{equation*} 
Then break up $a(x,\xi)$ as
\begin{equation}\label{sums}
\sum_{0\leq k<\log_2{(10C_d)}}{\dots}+\sum_{\log_2{(10C_d)}\leq k\leq k_0}{\dots}+\sum_{k=k_0+1}^{\infty}{\dots}.\end{equation}
Since $\widetilde{A_k}$ is contained in $\mathcal{S}_{0,0,0}^{0}$ uniformly in $k$, each integral in the summations is bounded by a constant independent of $k$ and thus the first one is done because it is a finite sum.

Let $\Theta$ be a Schwartz function such that $0\leq \Theta \leq 1$, $\Theta(\xi)=1$ on $\{\xi: 2^{-1}\leq |\xi|\leq 2\}$ and $Supp(\Theta)\subset \{\xi:2^{-2}\leq |\xi|\leq 2^2\}$.
Then the second one of (\ref{sums}) is equal to 
\begin{align}\label{secondsums}
& \sum_{\log_2{(10C_d)}\leq k\leq k_0}{ \int_{\mathbb{R}^d\times\mathbb{R}^d}{     \widetilde{A_{k}}\big(2^{\rho k}x,2^{\rho k}x-y,2^{-\rho k}\xi-\eta\big)\Theta\big(2^{-k}(\xi-2^{\rho k}\eta)  \big)    }     }\nonumber\\
& \times e^{-2\pi i\langle y,\eta\rangle }  d\eta dy\nonumber\\
&= \sum_{\log_2{(10C_d)}\leq k\leq k_0}\sum_{l,j\in\mathbb{Z}^d}{       \int_{\mathbb{R}^d\times\mathbb{R}^d}{     \widetilde{A_{k}}\big(2^{\rho k}x, 2^{\rho k}x-y,2^{-\rho k}\xi-\eta\big)\Theta\big(2^{-k}(\xi-2^{\rho k}\eta)  \big)    }       }\nonumber\\
  & \times {\chi}(y-l) {\chi}(\eta-j) e^{-2\pi i\langle y,\eta\rangle }    d\eta dy \nonumber \\
&= \sum_{\log_2{(10C_d)}\leq k\leq k_0}\sum_{|j|\leq 2^{k_0(1-\rho)-2}}{\sum_{l}{\dots}}+\sum_{\log_2{(10C_d)}\leq k\leq k_0}\sum_{|j|>2^{k_0(1-\rho)-2}}{\sum_{l}{\dots}}
\end{align}
Observe that for $|j|\leq 2^{k_0(1-\rho)-2}$ \begin{equation*}
\big| \partial^{\alpha}\Theta\big( 2^{-k}(\xi-2^{\rho k}\eta)    \big)  \big|\lesssim_{\alpha,N}  2^{-N(k_0-k)} 
\end{equation*} for all $N>0$ and all multi-indices $\alpha$.
Then this observation and the method we used for the case $\rho=0$ yield that 
the first sum in (\ref{secondsums}) is less than a constant times
\begin{equation*}
\sum_{k=\log_2{(10C_d)}}^{k_0}{2^{-N(k_0-k)}}\lesssim 1
\end{equation*} for sufficiently large $N$.
We can also prove that the second sum in (\ref{secondsums}) is less than a constant by using (\ref{integralbound}) ( $|j|^{-M}$ when $|l|\leq |j|$, and $|l|^{-M}$ when $|l|>|j|$ ), which completes the estimate of second term in (\ref{sums}).

We write the last term of (\ref{sums}) as 
\begin{equation*}
\sum_{k=k_0+1}^{\infty}{ \int_{\mathbb{R}^d\times\mathbb{R}^d}{     \widetilde{A_{k}}\big(2^{\rho k}x,2^{\rho k}x-y,2^{-\rho k}\xi-\eta\big)\Theta\big(2^{-k}(\xi-2^{\rho k}\eta)  \big) e^{-2\pi i\langle y,\eta\rangle }    }d\eta dy     }.
\end{equation*}
Then as we did, decompose  the sum as
\begin{align*}
& \sum_{k=k_0+1}^{\infty}{\sum_{l,j\in\mathbb{Z}^d}{\dots}}\\
&= \sum_{k=k_0+1}^{\infty}{I}+\sum_{k=k_0+1}^{\infty}{II}+\sum_{k=k_0+1}^{\infty}{III}+\sum_{k=k_0+1}^{\infty}{IV}+\sum_{k=k_0+1}^{\infty}{V}
\end{align*} like (\ref{fivedecomp}).
We already know that each summand is bounded by a constant independent of $k$, but when we take a summation over $k_0+1\leq k$ it may diverge because it is an infinite sum.
In this case we observe  that 
\begin{equation}\label{keysum}
\sum_{k=k_0+1}^{\infty}{\Theta\big(2^{-k}(\xi-2^{\rho k}\eta)   \big)}
\end{equation} is a bounded function of $\eta$ because at most a finite number of supports of all summands have  an intersection and $\Theta$ itself is a bounded function. Furthermore, each derivative of (\ref{keysum}) is also a bounded function by the same reason. Since $\widetilde{A_k}$ belongs to $\mathcal{S}_{0,0,0}^{0}$ uniformly in $k$, we see that
\begin{align}\label{ababab}
& \sum_{k=k_0+1}^{\infty}{\big| \partial_{\eta}^{\alpha}\partial_y^{\beta}\widetilde{A_{k}}(2^{\rho k}x, 2^{\rho k}x-y,2^{-\rho k}\xi-\eta)\big|\big|\Theta\big( 2^{-k}(\xi-2^{\rho k}\eta)  \big)\big|}\nonumber\\
&\lesssim_{\alpha,\beta} \sum_{k=k_0+1}^{\infty}{\big|  \Theta\big( 2^{-k}(\xi-2^{\rho k}\eta)  \big)  \big|}\lesssim 1.
\end{align}
Finally (\ref{integralbound}) and (\ref{ababab}) prove that the last sum of (\ref{sums}) is less than a constant.

When $|\xi|\leq 20C_d$ then write $a(x,\xi)$ as 
\begin{equation*}
\sum_{0\leq k< \log_2{(10C_d)}}{\dots}+\sum_{\log_2{(10C_d)}\leq k}{\dots}.\end{equation*}
Then the first one is bounded by a constant because it is a finite sum and the estimate for the second one is derived from the fact that 
\begin{equation*}
\sum_{\log_2{(10C_d)}\geq k}{\Theta\big(2^{-k}(\xi-2^{\rho k}\eta)   \big)}
\end{equation*} is a bounded function of $\eta$.

\end{proof}

As a result of Lemma \ref{lllmmm} 
\begin{equation}
Op\mathcal{S}_{\rho,\delta}^{m}= Op\mathcal{S}_{\rho,\delta,\delta}^{m}
\end{equation} when $0\leq \delta\leq \rho<1$ and thus our boundedness results also holds for $Op\mathcal{S}_{\rho,\rho,\rho}^{m}$ when $0<\rho<1$.

\section*{Acknowledgement}

{
The author would like to thank his advisor Andreas Seeger for the guidance and helpful discussions.
The author also thanks the referee for many valuable remarks which greatly improved the presentation of this paper.
The author is supported in part by NSF grant DMS 1500162}.


\begin{thebibliography}{99}


\bibitem{Bo}
J. M. Bony,  \emph{Calcul symbolique et propagation des singularit\'es pour les \'equations aux d\'eriv\'ees partielles non lin\'eaires}, Ann.Sc.E.N.S. Paris \textbf{14}
(1981) 209-246.

\bibitem{Bu_Ta}
H. Bui, M. Taibleson  \emph{The characterization of the Triebel-Lizorkin spaces for $p=\infty$}, J. Fourier Anal. Appl. \textbf{6}
(2000) 537-550.


\bibitem{Ca_Va}
A. P. Calder\'on and R. Vaillancourt,  \emph{A class of bounded pseudo-differential operators}, Proc. Nat. Acad. Sci. U.S.A. \textbf{69}
(1972) 1185-1187.


\bibitem{Chi}
C. Ching,  \emph{Pseudo-Differential Operators with Nonregular Symbols}, J. Differential Equations \textbf{II}
(1972) 436-447.


\bibitem{Ch_Se}
M. Christ and A. Seeger,  \emph{Necessary conditions for vector-valued operator inequalities in harmonic analysis}, Proc. London Math. Soc. \textbf{(3)93}
(2006) 447-473.

\bibitem{Fe0}
C. Fefferman,  \emph{Inequalities for strongly singular convolution operators}, Acta Math. \textbf{123}
(1969) 9-36.


\bibitem{Fe}
C. Fefferman,  \emph{$L^p$ bounds for pseudo-differential operators}, Israel J. Math. \textbf{14}
(1973) 413-417.

\bibitem{Fe_St}
C. Fefferman and E. M. Stein,  \emph{$H^p$ spaces of several variables}, Acta Math. \textbf{129}
(1972) 137-193.


\bibitem{Fr_Ja}
M. Frazier and B. Jawerth,  \emph{The $\varphi$-transform and applications to distribution spaces}, in "Function Spaces and Applications", Lecture Notes in Math.  Vol. 1302, Springer-Verlag, New York/Berlin,
(1988) 223-246.


\bibitem{Fr_Ja1}
M. Frazier and B. Jawerth,  \emph{A discrete transform and decomposition of distribution spaces}, J. Functional Anal. \textbf{93}
(1990) 34-170.

\bibitem{Fr_Ja2}
M. Frazier and B. Jawerth,  \emph{Applications of the $\phi$ and wavelet transforms to the theory of function spaces}, Wavelets and Their Applications, pp.377-417, Jones and Bartlett, Boston, MA, 1992

\bibitem{Go}
D. Goldberg,  \emph{A local version of real Hardy spaces}, Duke Math. J. \textbf{46}
(1979) 27-42.

\bibitem{Gr}
L. Grafakos,  \emph{Modern Fourier Analysis}, Springer (2008)


\bibitem{Hi}
I. I. Hirschman,  \emph{On multiplier transformations}, Duke Math. J., \textbf{26}
(1959) 221-242.

\bibitem{Ho}
L. H\"ormander,  \emph{Pseudo-Differential Operators and Hypoelliptic equations}, Proc. Sympos. Pure Math. \textbf{10} (1967) 138-183.


\bibitem{Ho1}
L. H\"ormander,  \emph{The analysis of linear partial differential operators III}, Springer-Verlag
(1994).




\bibitem{Joh}
J. Johnsen,  \emph{Domains of pseudo-differential operators: a case for the Triebel-Lizorkin spaces}, J. Funct. Spaces Appl. \textbf{3}
(2005) 263-286.


\bibitem{Joh1}
J. Johnsen,  \emph{Type 1,1-operators defined by vanishing frequency modulation, in: New Developments in Pseudo-Differential Operators, edited by Rodino and Wong, Operator Theory}, Advances and Applications Vol. 189, Birkh\"auser
(2008) 201-246.


\bibitem{Ma}
J. Marschall, \emph{Nonregular pseudo-differential operators}, J. Anal. Anwendungen \textbf{15(1)}(1996) 109-148.

\bibitem{Me}
Y. Meyer,  \emph{Remarques sur un th\'eor\`eme de J.-M. Bony, Proceedings of the Seminar on Harmonic analysis}, Bourbaki Seminar
(1981) 1-20.

\bibitem{Mi}
A. Miyachi, \emph{On some Fourier multipliers for $H^p(\mathbb{R}^d)$}, J. Fac. Sci. Univ. Tokyo Sect. IA Math. \textbf{27}(1980) 157-179.



\bibitem{Pa_So}
L. P\"aiv\"arinta and E. Somersalo,  \emph{A generalization of the Calder\'on-Vaillancourt theorem to $L^p$ and $h^p$}, Math. Nachr. \textbf{138}
(1988) 145-156.

\bibitem{Park1}
B. Park,  \emph{Some maximal inequalities on Triebel-Lizorkin spaces for $p=\infty$}, In preparation.

\bibitem{Park4}
B. Park,  \emph{Sharp estimates for pseudo-differential operators of type $(1,1)$ on Triebel-Lizorkin and Besov spaces}, In preparation.


\bibitem{Pe}
J. Peetre,  \emph{On spaces of Triebel-Lizorkin type}, Ark. Mat. \textbf{13}
(1975) 123-130.

\bibitem{Pr_Ro_Se}
M. Pramanik, K.M. Rogers, A. Seeger, \emph{A Calder\'on-Zygmund estimate with applications to generalized Radon transforms and Fourier integral operators}, Studia Mathematica, \textbf{202} (2011) 1-15.

\bibitem{Ru}
T. Runst, \emph{Pseudodifferential operators of the ``exotic'' class $L_{1,1}^{0}$ in spaces of Besov and Triebel-Lizorkin type}, Ann. Global Anal. Geom., \textbf{3} (1985) 13-28.


\bibitem{St}
E. M. Stein,  \emph{Singular integrals, harmonic functions, and differentiability properties of functions of several variables}, Proc. Sympos. Pure Math., vol. 10 Amer. Math. Soc., Providence, R.I.
(1967) 316-355.


\bibitem{Ta}
M. E. Taylor , \emph{Pseudodifferential Operators And Nonlinear PDE}, Birkh\"auser, Boston (1991).

\bibitem{To}
R. H. Torres, \emph{Boundedness results for operators with singular kernels on distribution spaces}, Mem. Amer. Math. Soc.\textbf{442}(1991).

\bibitem{Tr}
H. Triebel, \emph{Theory of Function Spaces}, Birkhauser, Basel-Boston-Stuttgart
(1983).


\bibitem{Wa}
S. Wainger, \emph{Special trigonometric series in k dimensions}, Memoirs of the AMS., \textbf{59}(1965).

\bibitem{Ya}
M. Yamazaki, \emph{A quasi-homogeneous version of paradifferential operators, I. Boundedness on spaces of Besov type}, J. Fac. Sci. Univ. Tokyo Sect. IA, Math., \textbf{33}(1986) 131-174.

\end{thebibliography}
\end{document}